\newlist{indlist}{itemize}{4}
\setlist[indlist,1]{%
    label={},
    noitemsep,
    leftmargin=0pt,
    }
\setlist[indlist]{
    label={},
    noitemsep,
    }
\newtheorem{theorem}{Theorem}[section] 
\newtheorem{lemma}[theorem]{Lemma}     
\newtheorem{cor}[theorem]{Corollary}
\newtheorem{prop}[theorem]{Proposition}
\providecommand{\val}{\mathcal{O}}
\providecommand{\M}{\mathcal{M}}
\providecommand{\Q}{\mathbb{Q}_2}
\providecommand{\Z}{\mathbb{Z}}
\providecommand{\sq}{K^{\times} / (K^{\times})^2}
\providecommand{\F}{\mathbb{F}_2}
\title{Recovering $p$-adic valuations from pro-$p$ Galois groups}
\author{Jochen Koenigsmann and Kristian Strommen}
\begin{document}
\maketitle

\begin{abstract}
Let $K$ be a field with $G_K(2) \simeq G_{\Q}(2)$, where $G_F(2)$ denotes the maximal pro-2 quotient of the absolute Galois group of a field $F$. We prove that then $K$ admits a (non-trivial) valuation $v$ which is 2-henselian and has residue field $\F$. Furthermore, $v(2)$ is a minimal positive element in the value group $\Gamma_v$ and $[\Gamma_v:2\Gamma_v]=2$. This forms the first positive result on a more general conjecture about recovering $p$-adic valuations from pro-$p$ Galois groups which we formulate precisely. As an application, we show how this result can be used to easily obtain number-theoretic information, by giving an independent proof of a strong version of the birational section conjecture for smooth, complete curves $X$ over $\Q$, as well as an analogue for varieties.
\end{abstract}

\section{Introduction}
\label{sec:introduction}

One of the most fruitful philosophies of modern number theory has been that one can understand the arithmetic of a field $K$ by understanding the structure of the absolute Galois group $G_K=Gal(K^{sep}/K)$, where $K^{sep}$ is a separable closure of $K$. One instance of this is the celebrated Neukirch-Uchida Theorem, which states that, for algebraic number fields $K_1$ and $K_2$, if $G_{K_1} \simeq G_{K_2}$ then $K_1 \simeq K_2$. A similar result for $p$-adic local fields was obtained by Mochizuki in \cite{mochi_padic}, provided one adds some minimal extra structure on the Galois group.

More generally, one could ask what information about an \emph{abstract} field is determined by its absolute Galois group. It is well known (by work of Artin-Schreier et al.) that a field $K$ is real-closed if and only if $G_K \simeq G_F$, where $F$ is a real-closed field. The equivalent result for $p$-adically closed fields was obtained in \cite{koe1}. In particular, $G_K \simeq G_{\mathbb{Q}_p}$ iff $K$ is elementarily equivalent to $\mathbb{Q}_p$ in the language of rings. This was used in \cite{koe3} to give a proof of the section conjecture in birational anabelian geometry for $p$-adic fields. In both cases the idea is to show that the abstract structure of the Galois group encodes the existence of an ordering (resp. a henselian valuation), which then determines the arithmetic of the field.

Because $G_K$ is often highly complex, it is reasonable to ask how much information can be obtained when making use of smaller quotients. Of particular interest are the maximal pro-$l$ quotients $G_K(l)$ of $G_K$, $l$ a prime, since for $p$-adic fields these quotients are well understood. Indeed, if $\zeta_l$ is a primitive $l$-th root of unity, $l \not=p$, then $G_{\mathbb{Q}_p(\zeta_l)}(l) \simeq \mathbb{Z}_l \rtimes \mathbb{Z}_l$ via the action of the cyclotomic character. Conversely, it is shown in \cite{koe2} that if $K$ is a field containing $\zeta_l$ and $G_K(l) \simeq \mathbb{Z}_l \rtimes \mathbb{Z}_l$, then $K$ admits a non-trivial $l$-henselian valuation\footnote{For the definition of $l$-henselianity, $l$ a prime, see Section 2.} with residue field of characteristic different from $l$. The valuation is recovered using the theory of `rigid elements' (see e.g. \cite{ep}, Section 2.2.3) via a combinatorial argument. The existence of suitable rigid elements is in turn inferred from the structure of $G_K(l)$, which forces $K$ to satisfy a `local reciprocity law' in the form of a bijection between extensions of degree $l$ and subgroups of $K^{\times}$ of index $l$ induced by the norm map.

The case $l \not=p$ lives in the `tamely ramified' part of the Galois group. Much more mysterious is the `wild' case $l=p$. Here the structure of $G_{F}(p)$, $F$ a finite extension of $\mathbb{Q}_p$ containing $\zeta_p$, is known by work of Demushkin, Labute and Serre (cf. \cite{serre2} and \cite{labute}). It is an example of a pro-$p$ Demushkin group given by generators and relations which can be specified (see Section 2). These fields are therefore canonical examples of what Efrat has called \emph{$p$-Demushkin fields} (see Section 2.2). As already alluded to when mentioning the result of \cite{mochi_padic}, there is no hope of recovering \emph{all} the arithmetic of such a $p$-Demushkin field purely from the maximal pro-$p$ quotient of its absolute Galois group, since there exist non-isomorphic pairs $F, F'$ of finite extensions $\mathbb{Q}_p$ containing $\zeta_p$ for which $G_F \simeq G_{F'}$. Examples of such are given in Section 2 of \cite{jarden1979characterization}, and include e.g. the pairs $F=\mathbb{Q}_p(\zeta_p, \sqrt[p]{p}), F'=\mathbb{Q}_p(\zeta_p, \sqrt[p]{p+1})$, for any prime $p>2$. Further examples in the pro-$p$ context are given in Remarks \ref{rmk:diff_gal_groups} and \ref{rmk:diff_gal_groups_p2}.

With this indeterminacy in mind, we make the following conjecture:\\

\begin{conj1}
\emph{Let $F/\mathbb{Q}_p$ be a finite extension, $K$ an arbitrary field, where both $F$ and $K$ contain $\zeta_p$. Suppose $G_F(p) \simeq G_K(p)$. Then we conjecture that the characteristic of $K$ must be $0$. Furthermore, we conjecture that there exists a non-trivial valuation $v$ on $K$ such that for some finite extension $F'/\mathbb{Q}_p$ with $G_{F'}(p) \simeq G_F(p)$ and $p$-adic valuation $w$ on $F'$, the following holds:}
\begin{itemize}
\item[(1)] $v$ is \emph{$p$-henselian}
\item[(2)] $F'w = Kv$
\item[(3)] $[\Gamma_v : p\Gamma_v]=p$
\item[(4)] There is a uniformizer $\pi$ of $(K,v)$, in the sense that $v(\pi )$ is minimal positive in $\Gamma_v$
(so the valuation is discrete\footnote{Note that this is meant in the more general valuation-theoretic sense that the value group has a minimal positive element, not that the value group is isomorphic to $\Z$.}), and $K$ and $F^\prime$ have the same initial ramification index over $\mathbb{Q}$, i.e., there is a natural number $e$ with $v(p) = e\cdot v(\pi )$ and $w(p) = e\cdot w(\pi^\prime)$, where $\pi^\prime$ is a uniformizer of $F^\prime$.
\end{itemize}
\emph{Here $Kv$, $F'w$ and $\Gamma_v$ denote the residue fields and value group of the respective valuations.}
\end{conj1}

\noindent Thus conjecturally, the `wild' part of $G_K$ sees a lot more of the structure of the field than the `tame' part. In fact, in the spirit of the Elementary Type Conjecture (see e.g. the introduction of \cite{jacobware} as well as \cite{efratetc}), we expect that the following conjecture holds:\\

\begin{conj2}
\emph{Suppose $G_K(p)$ is a finitely generated pro-$p$ Demushkin group of rank $\geq 3$. Then there is a finite extension $F/\mathbb{Q}_p$ containing $\zeta_p$ such that $G_K(p) \simeq G_F(p)$.}\\
\end{conj2}

\noindent That is, we expect that essentially the \emph{only} examples of finitely generated pro-$p$ Galois groups which are Demushkin are the ones coming from finite extensions of $\mathbb{Q}_p(\zeta_p)$, and that the structure implied by being Demushkin is already enough to force the existence of a valuation which is as close to being $p$-adic as one could reasonably hope. A proof of this would be a major step forward in the programme of classifying all finitely generated pro-$p$ Galois groups (see \cite{efratetc}, particularly sections 5.2 and 5.3, for a discussion on this point).\\

\begin{rmk} (\emph{Counterexamples to sharper versions of Conjecture 1})\\
In special cases the valuation recovered in Conjecture 1 can be assumed to have nicer properties; for example, $F'$ can sometimes be taken be to $F$ itself (see Remark \ref{cool} and Proposition \ref{bsc}). However, it is in general not possible to sharpen the Conjecture considerably, as the following examples demonstrate.
\begin{itemize}
\item The valuation in Conjecture 1 cannot be taken to be rank 1, since e.g. $\mathbb{Q}_p(\!(\mathbb{Q})\!)$ has absolute Galois group isomorphic to that of $\mathbb{Q}_p$, and the associated $p$-adic valuation has higher rank. Here $\mathbb{Q}_p(\!(\mathbb{Q})\!)$ denotes the field of formal power series with coefficients in $\mathbb{Q}_p$ and powers in $\mathbb{Q}$.
\item One also cannot expect more than $p$-henselianity in Conjecture 1. For example, let $K$ be the fixed field of a decomposition subgroup of $G_{\mathbb{Q}(\zeta_p)}(p)$ with respect to any prolongation of the $p$-adic valuation on $\mathbb{Q}(\zeta_p)$. Then $G_K(p) \simeq G_{\mathbb{Q}_p(\zeta_p)}(p)$ and $K$ is $p$-henselian but not henselian.
\item The uniformizer $\pi$ of $(K,v)$ in part (4) of Conjecture 1 cannot always be chosen to be {\em algebraic} over $\mathbb{Q}$. For example, given a prime $p>2$, let $F/ \mathbb{Q}_{p}(\zeta_p)$ be a proper finite totally ramified extension
of degree prime to $p$, let $\pi\in F$ be a uniformizer of $F$ transcendental over $\mathbb{Q}$,
and let $K = F\cap [\mathbb{Q}(\zeta_p,\pi )(p)]$ be the $p$-henselisation of $\mathbb{Q}(\zeta_p,\pi )$
with respect to the $p$-adic valuation induced from $F$. Then $G_K(p)\cong G_F(p)$ as the restriction $G_F(p) \to G_K(p)$ is onto by construction and as $F$ is in fact, the completion of $K$, so, in particular $F(p)$ is the compositum of $F$ and $K(p)$. Now $\pi$ is a uniformizer of both $F$ and $K$, but no uniformizer $\pi^\prime$ of $F$ which is algebraic over $\mathbb{Q}$ could be a uniformizer of $K$, as otherwise F would be contained in the compositum of $\mathbb{Q}_p(\zeta_p)$ and the algebraic part of K, hence in $\mathbb{Q}_p(\zeta_p)(p)$
which is impossible since $F/\mathbb{Q}_p(\zeta_p)$ is a proper extension of degree prime to p.

\end{itemize}
\end{rmk}

\noindent The most significant previous work on these conjectures was done by Efrat in \cite{efrat}. He proved a conditional result that a field $K$ satisfying the conditions of Conjecture 1 must be so-called ``arithmetically Demushkin", equivalent to the existence of a valuation on $K$ of a $p$-adic nature (Theorem 6.3b of ibid). The additional assumption required was that $K$ already admitted a valuation with non-$p$-divisible value group and whose fixed field of a decomposition subgroup of $G_K(p)$ does not contain all $p^n$-th roots of unity. It was conjectured that in fact all Demushkin fields are arithmetically Demushkin. Under the same assumption, a proof of Conjecture 2 is also obtained. By comparison, Conjecture 1 here represents a sharpening of Efrat's conjecture: if $K$ admits a valuation with the properties specified in Conjecture 1, then $K$ is necessarily arithmetically Demushkin, by way of taking a suitable coarsening\footnote{Note that if $K$ is as in Conjecture 1, then $G_K(p)$ necessarily is Demushkin of rank $\geq 3$, since $G_F(p)$ has rank at least 3 when $F$ is a $p$-adic field containing $\zeta_p$. Therefore $K$ satisfies the conditions in \cite{efrat}.}. 

The main difficulty in proving unconditional versions of these conjectures is to produce, from the Galois structure alone, any non-trivial valuation whatsoever. In this paper we will prove the following unconditional result, which represents the first successful attempt at overcoming this hurdle so far:\\

\begin{theorem1}
\emph{Conjecture 1 is true in the case $F = \Q$. That is, if $K$ is any field with $G_K(2) \simeq G_{\Q}(2)$, then there exists a non-trivial $2$-henselian valuation $v$ on $K$ such that the residue field $Kv$ is $\F$, the value group $\Gamma_v$ is discrete with $v(2)$ a minimal positive element and $[\Gamma_v : 2\Gamma_v]=2$.}\\
\end{theorem1}

\noindent The proof hinges on the fact, first noted in Frohn's thesis \cite{frohn}, that any field $K$ for which $G_K(p)$ is Demushkin satisfies a `local reciprocity' law induced by the norm map (see Proposition \ref{frohn}). The proof then proceeds in three steps. First, one uses the explicit structure of $G_{\Q}(2)$ to make some preliminary observations about $\sq$, which in turn imply that $char(K)=0$. Secondly, putting $k = K \cap \overline{\mathbb{Q}}$, we show using the Albert-Brauer-Hasse-Noether local-global principle that, unless we are in an exceptional case, $k$ embeds into $\mathbb{Q}_2 \cap \overline{\mathbb{Q}}$ and hence that $\sq$ is generated as an $\F$-vector space by $-1,2$ and $5$, as is the case for $\Q$. Assuming we are not in the exceptional case, a consequence of this and of `local reciprocity' is that the `lattice' of norm subgroups $Norm(K(\sqrt{a})^{\times}) \leqslant K^{\times}$ for $a \in K \setminus K^2$ is identical to that of $\mathbb{Q}_2$. Thirdly, we use an adaptation of the rigid element method to construct a valuation ring which satisfies all the desired properties. The fact that our construction yields a valuation ring depends on checking that certain subsets in $K$ live inside specific norm subgroups. It turns out that this depends purely on the `combinatorics' of the lattice of norm subgroups arising under the operation of intersecting multiple such subgroups. Because the lattice in $K$ matches that of $k$, the existence of the desired valuation ring is in this way effectively `lifted' from $k$ to $K$. The verification of this part is carried out via explicit computations which have a distinctly K-theoretic flavour. Indeed, the technique locates elements in specific norm groups by intersecting multiple Steinberg relations, and in some sense resembles doing arithmetic combinatorics over the 2-truncated big Witt vectors of $K$. It would of course be ideal to obtain a more conceptual, less computation-heavy, proof, by making this vague statement precise, but this has so far eluded us.

Most of the calculations are relegated to the appendices to aid exposition. Finally, a proof by contradiction shows that the exceptional case from step 2 simply cannot occur, by using the same norm-combinatorics techniques.

We do not have any new ideas of how to prove Conjecture 2, so the result in \cite{efrat} (his Theorem 7.3) remains the state of the art.

It is the opinion of the authors that the general framework of the proof should be generalizable to cases where $p > 2$. If one can recover the correct norm-lattice, then similar computations should suffice to extract the required valuation. In fact, since the rigid element construction is simpler in the case $p>2$, these computations may end up being conceptually simpler. However, the methods used here to pin down the norm-lattice depended on properties specific to $p=2$ and it is therefore unclear how to proceed in the general case.

Finally, as an application, in Section 5 we show that Theorem 1 has the following corollary, which gives a significant strengthening of the birational section conjecture (see \cite{koe3})  over $\Q$.\\

\begin{cor1}\emph{Suppose $X$ is a smooth, complete curve over $K=\Q$. Then every group-theoretic section of the canonical projection
\begin{equation}
G_{K(X)}(2) \twoheadrightarrow G_K(2) \nonumber
\end{equation}
is induced by a unique rational point\footnote{See Section 5 for the precise meaning of this.} $a \in X(K)$, where $K(X)$ is the function field of $X$.}
\end{cor1}

\noindent In fact, by more closely analysing the proof of the main theorem, we show that one can get away with an even smaller quotient of the Galois group, the so-called maximal $\mathbb{Z}/p$ elementary meta-abelian quotient (see the introduction to \cite{popmeta}). Thus we recover as a consequence of our work the main result of \cite{popmeta} in the case where the base field is $\Q$. We also prove a statement for varieties, strengthening the main result of \cite{stixvarieties}.

\begin{acknowledgement}
We would like to express our deep gratitude to the anonymous referee who has made numerous extremely helpful suggestions which have greatly improved the paper.
\end{acknowledgement}

\section{Preliminaries}

We collect some technical definitions and results used. All the valuation theory needed can be found in \cite{ep}. The results on Galois cohomology and Demushkin groups are all stated and proved in \cite{serre2} (Chapter 5) and \cite{labute}. The results on Kummer theory, Brauer groups, central simple algebras and Galois cohomology over an arbitrary field are comprehensively covered in \cite{gille2017central} (Chapter 4 in particular). An alternative reference for several of these points is \cite{Neukirch2007} (Chapters 3, 6 and 7 in particular). Finally, a particularly useful reference is Frohn's thesis \cite{frohn}, which conveniently covers most of the required background.

\subsection{$p$-Henselian valuations}
Given a valuation $v$ on a field $K$, we let $Kv$ denote the residue field and $\Gamma_v$ the value group. The valuation ring will be denoted by $\val_v$.

\begin{defn}
For a field $K$ and a prime $p$, we write $K(p)$ for the {\bf maximal pro-$p$ Galois extension of $K$} (within some fixed algebraic closure of $K$), i.e., the compositum of all finite Galois extensions of $K$ of order a power of $p$. A field is said to be {\bf $p$-closed} if $K=K(p)$.

If $v$ is a valuation on $K$, we say $v$ is {\bf $p$-henselian} if $v$ has a unique extension to $K(p)$, or, equivalently, if Hensel's lemma holds for polynomials that split in $K(p)$.
\end{defn}

\begin{fact}
\label{fact:phenselian}
\emph{A field $(K,v)$ is $p$-henselian if and only if $v$ extends uniquely to every Galois extension of degree $p$.} (\cite{ep} Theorem 4.2.2)
\end{fact}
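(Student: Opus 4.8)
The plan is to prove the two implications separately, taking as the definition of $p$-henselianity (from the Definition above) that $v$ extends uniquely to the full pro-$p$ closure $K(p)$. The forward implication is essentially formal, whereas the reverse one is where the real content lies: I would translate unique extendability into a statement about the decomposition group sitting inside the pro-$p$ group $G := \mathrm{Gal}(K(p)/K) = G_K(p)$, and then exploit the Frattini structure of pro-$p$ groups.

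For the forward direction, assume $v$ extends uniquely to $K(p)$ and let $L/K$ be any Galois extension of degree $p$. Since $\mathrm{Gal}(L/K) \cong \Z/p$ is a $p$-group, we have $L \subseteq K(p)$. Any extension of $v$ to $L$ can be prolonged to $K(p)$ by Chevalley's extension theorem, and all such prolongations agree by hypothesis; restricting the unique extension on $K(p)$ back down to $L$ then shows that $v$ extends uniquely to $L$.

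For the reverse direction, I would fix an extension $w$ of $v$ to $K(p)$ and consider its decomposition group $G_Z = \{\sigma \in G : w \circ \sigma = w\}$, a closed subgroup of $G$. Since the set of extensions of $v$ to $K(p)$ is precisely the $G$-orbit of $w$ (the Galois group acts transitively on extensions), unique extendability to $K(p)$ is equivalent to $G_Z = G$. Suppose for contradiction that $G_Z \subsetneq G$. Because $G$ is pro-$p$ and $G_Z$ is a proper closed subgroup, the Frattini argument furnishes a maximal open subgroup $M \supseteq G_Z$ of index $p$; as index-$p$ subgroups of a pro-$p$ group are automatically normal, the fixed field $L := K(p)^M$ is Galois over $K$ of degree $p$. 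A standard double-coset count identifies the extensions of $v$ to $L$ with $M \backslash G / G_Z$, and since $G_Z \subseteq M$ we get $M G_Z = M \neq G$, so there is more than one double coset and hence $v$ has at least two extensions to $L$. This contradicts the assumption that $v$ extends uniquely to every degree-$p$ Galois extension, forcing $G_Z = G$ and therefore unique extension to $K(p)$.

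The main obstacle, and the only genuinely non-formal step, is the use of the pro-$p$ Frattini property in the reverse direction: one needs that a proper closed subgroup $G_Z$ of the pro-$p$ group $G$ is contained in some open subgroup of index $p$. This rests on the fact that $G/\Phi(G)$ is an elementary abelian pro-$p$ group whose continuous characters separate points, so the image of $G_Z$ lies in the kernel of a nontrivial character, which pulls back to the desired index-$p$ subgroup $M$. The transitivity of the Galois action on extensions and the double-coset bookkeeping are routine (see \cite{ep}), so once the Frattini step is secured the contradiction follows immediately.
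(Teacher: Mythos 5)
The paper states this Fact without proof (it is a standard result, implicit in \cite{ep} and the $p$-henselianity literature), so there is no in-text argument to compare against. Your proof is correct and is the standard one: the forward direction via Chevalley extension plus restriction, and the reverse direction via the conjugacy of extensions in the normal extension $K(p)/K$, the Frattini property of pro-$p$ groups (a proper closed subgroup lies in a normal open subgroup of index $p$), and the double-coset count $M\backslash G/G_Z$, which for $G_Z\subseteq M\trianglelefteq G$ of index $p$ yields $p>1$ extensions to the fixed field of $M$.
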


\subsection{Demushkin groups and their invariants}

\begin{defn}
Let $G$ be a pro-$p$ group for some prime $p$. $G$ is said to be {\bf Demushkin}\footnote{Note that other sources may define this slightly differently.} of rank $n>1$ ($n \in \mathbb{N}$) if
\begin{itemize}
\item[(i)] $H^1(G, \Z / p\Z)$ is a finite $\mathbb{F}_p$-vector space with  $dim_{\mathbb{F}_p}(H^1(G, \Z / p\Z))=n$.
\item[(ii)] $dim_{\mathbb{F}_p}(H^2(G, \Z / p\Z))=1$.
\item[(iii)] The cup product $H^1(G, \Z / p\Z) \times H^1(G, \Z / p\Z) \rightarrow H^2(G, \Z / p\Z)$ is a non-degenerate bilinear pairing.
\end{itemize}
\end{defn}
The first condition says that $G$ is finitely generated as a pro-$p$ group, with a minimal set of generators having size $n$. The second condition says that a minimal presentation requires precisely one relation (see \cite{serre2} Section 4.2) or Section 1 of \cite{labute}). Hence $G \simeq F/(r)$, where $F$ is the free pro-$p$ group on $n$ generators, $r$ is an element (or `word') of $F^p[F,F]$, and $(r)$ is the cloxsed normal subgroup generated by $r$. Here brackets denote the commutator. Consideration of the abelianisation $G^{ab} = G/[G,G]$ of $G$ gives the following

\begin{lemma}
\label{g_ab_lemma}
There exists a unique $s \in \mathbb{N} \cup \{\infty\}$ such that
\begin{equation}
G^{ab} \simeq  \Z_p / p^s \Z_p \times \Z_p ^{n-1} \nonumber
\end{equation}
with $n = \text{rank}(G)$ and $p^{\infty} = 0$ by convention.
\end{lemma}
\begin{proof}
Taking the abelianization kills all the terms of the word $r$ involving commutators. If $r$ involves no $p$-th powers, then the abelianization is clearly just a finitely generated pro-$p$ abelian group on $n$ generators, and we are in the case $s=\infty$. Suppose then that $r$ does contain some $p$-th powers. In this case trivialising the commutators leaves behind an expression in $G^{ab}$ of the form $g_1^{p^{s_1}}g_2^{p^{s_2}}\ldots g_k^{p^{s_k}}=1$ for some $k \leq n$. Let $s$ be the smallest of the integers $s_i$: without loss of generality $s=s_1$. Then $G^{ab}$ has the following presentation as a pro-$p$ abelian group:
\begin{equation}
G^{ab} \simeq \langle g_1, g_2, \ldots, g_n \mid w^{p^s}=1 \rangle, \nonumber
\end{equation}
where $w=g_1g_2^{p^{s_2-s}}\ldots g_k^{p^{s_k-s}} \not= 1$. The set $\{w, g_2, \ldots, g_n\}$ clearly also generates $G^{ab}$, and no new non-trivial relations are introduced besides $w^{p^s}=1$, since such non-trivial relations would induce ones among $\{g_1, \ldots, g_n\}$. Thus $G^{ab}$ has a presentation of the form
\begin{equation}
G^{ab} \simeq \langle w, g_2, \ldots, g_n \mid w^{p^s}=1 \rangle, \nonumber
\end{equation}
from which one gets
\begin{equation}
G^{ab} \simeq  \Z / p^s \Z \times \Z_p ^{n-1} \simeq  \Z_p / p^s \Z_p \times \Z_p ^{n-1}  \nonumber
\end{equation}
as desired.
\end{proof}

\noindent The invariants $n$ and $s$ of $G$ almost determine $G$ up to isomorphism. To completely determine $G$, one needs the additional invariant $\text{Im}(\chi)$ introduced by Serre (\cite{serre2} Section 4.5). Serre has shown that there exists a continuous homomorphism $\chi : G \rightarrow \Z_p^{\times}$ which is uniquely determined by the property that the induced homorphisms $H^1(G, I_j(\chi)) \rightarrow H^1(G, I_1(\chi))$ are surjective for $j \geq 1$, where $I_j(\chi)$ is the $G$-module defined by letting $G$ act on $\Z / p^j \Z$ via $\chi$; see Theorem 4 of \cite{labute} for a proof. It can then be shown that the image $\text{Im}(\chi)$ is an invariant of $G$ such that $s$ is the highest power of $p$ for which $\text{Im}(\chi) \subset 1 + p^s\Z_p$ (Corollary to Theorem 4 in \cite{labute}). Furthermore, $n$ and $\text{Im}(\chi)$ completely determine $G$ up to isomorphism (Theorem 2 of \cite{labute}). In fact, Demushkin, Serre and Labute have given a complete classification of possible Demushkin groups in terms of these invariants, which can be split into three cases C1, C2 and C3. As before, the value of $p^{\infty}$ is set to be 0 by convention.

\begin{theorem} (Classification of Demushkin groups)
\label{thm:classification}
Let $G$ be a pro-$p$ Demushkin group with invariants $n \in \mathbb{N}$, $s \in \mathbb{N} \cup \{\infty\}$ and $A=Im(\chi)$. Let $U_p^{(f)} := 1+p^f \mathbb{Z}_p$. Then one of the following cases holds:\\
\begin{itemize}
\item[C1.] If $p^s \not= 2$ then $n$ is even and the relation $r$ can be chosen as
\begin{equation}
r=x_1^{p^s}[x_1,x_2]...[x_{n-1},x_{n}]. \nonumber
\end{equation}
In this case $A = U_{p}^{(s)}$.\\
\item[C2.] If $p^s=2$ and $n$ is odd, then
\begin{equation}
r=x_1^2 x_2^{2^f}[x_2,x_3][x_4,x_5]\ldots[x_{n-1},x_{n}], \nonumber
\end{equation}
with $f$ an integer $\geq 2$ or $\infty$ such that $A = \{\pm 1\}\times U_2^{(f)}$.\\
\item[C3.] Finally, let $p^s=2$ and $n$ be even. If $A^2$ has index 2 in $A$ then
\begin{equation}
r=x_1^{2+2f}[x_1,x_2]\ldots[x_{n-1},x_{n}] \nonumber
\end{equation}
with $f$ an integer $\geq 2$ or $\infty$. In this case $A$ is the closed subgroup of $\mathbb{Z}_2^{\times}$ generated by $-1+2f$. If $A^2$ has index 4 in $A$ then
\begin{equation}
r=x_1^{2}[x_1,x_2]x_3^{2f}\ldots[x_{n-1},x_{n}] \nonumber
\end{equation}
for some integer $f \geq 2$. In this case $A = \{\pm 1\}\times U_2^{(f)}$. \\
\end{itemize}
Conversely, all the above cases of $r$ define a Demushkin group with the appropriate invariants.
\end{theorem}
\begin{proof}
Details and proofs of all cases are given in \cite{labute}, with the three cases described in the Introduction.
\end{proof}

\noindent The canonical examples of Demushkin groups are the Galois groups $G_K(p)$ for finite extensions $K$ of $\mathbb{Q}_p$ containing $\zeta_p$ (\cite{serre2}, Section 5.6).

\subsection{Demushkin fields and field-theoretic interpretation of the Demushkin invariants}
\label{sec:demushkin_fields}

As just remarked, the groups $G_K(p)$ for finite extensions $K$ of $\mathbb{Q}_p$ containing $\zeta_p$ are Demushkin. These fields are therefore canonical examples of the following definition:

\begin{defn}
Let $K$ be a field. Following Efrat, we say that $K$ is a {\bf $p$-Demushkin field} if $K$ contains $\zeta_p$ and $G_K(p)$ is a Demushkin group. The Demushkin invariants of $G_K(p)$ are in this case labelled $n_K, s_K$ and $\chi_K$.
\end{defn}

\noindent Note that $p$-Demushkin fields are by definition of characteristic different from $p$.

The invariants $n_K, s_K$ and $\chi_K$ of a $p$-Demushkin field $K$ can be naturally interpreted in terms of the field structure, as we now explain.

\begin{defn}
Let $p$ be prime. For a field $F$ and $a \in F^{\times} \setminus (F^{\times})^p$, define
\begin{equation}
N_F(a):=Norm_{F(\sqrt[p]{a})/F}(F(\sqrt[p]{a})^{\times}) \nonumber
\end{equation}

Since $F(\sqrt[p]{a})=F(\sqrt[p]{b})$ whenever $a/b \in (F^{\times})^p$, we will by abuse of notation also write $N_F(a)$ for $a \in F^{\times}/(F^{\times})^p$, with $a$ denoting both an element of $F^{\times}$ and its class modulo $p$-th powers. 
\end{defn}

\begin{rmk}
When the base-field in question is clear, we will just write $N(a)$ for ease of notation.
\end{rmk}

Suppose that $K$ is a $p$-Demushkin field. Since the characteristic of $K$ is not $p$, Kummer Theory can be invoked to provide, upon choosing such a $p$-th root of unity, an isomorphism
\begin{equation}
H^1(G_K(p), \Z / p\Z) \simeq K^{\times}/(K^{\times})^p, \nonumber
\end{equation}
and hence the invariant $n_K$ is precisely the number of independent $p$-th power equivalence classes in $K^{\times}/(K^{\times})^p$. If $K$ is a finite extension of $\mathbb{Q}_p$, then $n = [K:\mathbb{Q}_p]+2$ (\cite{serre2}, Chapter 5, Section 5.6, Theorem 4). A description of the invariant $s_K$ will follow from the following characterisation of $\chi$.
\begin{lemma}
\label{lemma:invariant_chi}
Let $K$ be a $p$-Demushkin field. Then the Demushkin invariant $\chi_K$ coincides with the $p$-cyclotomic character of $K$.
\end{lemma}
\begin{proof}
This is proven in \cite{labute} assuming that $K$ is a finite extension of $\mathbb{Q}_p$ containing $\zeta_p$ (cf. the discussion following Theorem 7 of ibid), but the same proof works for more general $K$. The point is that one can show that the cyclotomic character $\chi'$ of $G_K(p)$ satisfies the property that the induced homomorphisms $H^1(G, I_j(\chi')) \rightarrow H^1(G, I_1(\chi'))$ are surjective for $j \geq 1$, where $I_j(\chi')$ is the $G$-module defined by letting $G$ act on $\Z / p^j \Z$ via $\chi'$. Since $\chi_K$ is the unique homomorphism satisfying this (Theorem 4 of \cite{labute}), it follows that $\chi_K = \chi'$.
\end{proof}

\begin{cor}
\label{cor:invariant_s}
Let $K$ be a $p$-Demushkin field. The invariant $s_K \in \mathbb{N} \cup \{\infty\}$ is uniquely determined in terms of the highest power $p^s$ for which $\zeta_{p^s} \in K$.
\end{cor}
\begin{proof}
As previously stated, $s$ is the highest power of $p$ for which $\text{Im}(\chi) \subset 1 + p^s\Z_p$. But by the above lemma, $\chi$ corresponds to the cyclotomic character, so this condition says that $p^s$ is the maximal power for which the cyclotomic character acts trivially on $\Z_p/p^s\Z_p$. This is exactly equivalent to $p^s$ being the maximal power for which $\zeta_{p^s} \in K$, essentially by definition of the cyclotomic character.
\end{proof}

\noindent 




\begin{rmk}
\label{rmk:diff_gal_groups}
The classification of Demushkin groups now makes it clear why different extensions of $\mathbb{Q}_p$ can have the same pro-$p$ Galois group. For example, if $p \neq 2$, let $F$ and $K$ be two finite degree $n$ extensions of $\mathbb{Q}_p$ containing $\zeta_p$, for some integer $n$. Then case C1 of the classification shows that $G_F(p) \simeq G_K(p)$ as long as $s_K = s_F$, that is, as long as they have the same $p^m$-th roots of unity.
\end{rmk}

\begin{rmk}\label{cool}
Note that if $K=\mathbb{Q}_p(\zeta_p)$, then if $K'$ is an arbitrary finite extension of $\mathbb{Q}_p$ containing $\zeta_p$, we have $G_{K'}(p) \simeq G_K(p)$ if and only if $K=K'$, as $K$ is the only such extension with Demushkin invariants $n_K=p-1, s_K=1$. Therefore in this case, $F'$ may be taken to be $F$ in the statement of Conjecture 1.
\end{rmk}

Next, we want to describe the cup product pairing. The theory of Brauer groups provides an isomorphism
\begin{equation}
H^2(G_K(p), \Z / p\Z) \simeq \Z / p\Z \simeq {_{p}Br(K)} ,\nonumber
\end{equation}
where ${_{p}Br(K)}$ is the $p$-torsion subgroup of the Brauer group of $K$. Here, the first isomorphism comes from the Demushkin assumption and is non-canonical, while the isomorphism of $H^2$ with ${_{p}Br(K)}$ is canonical.

\begin{defn} (`Hilbert symbol')
\label{def:symbol}
For a field $K$ with $\zeta_p \in K$, and two elements $a,b \in K^{\times}$, the `symbol' $(a,b)_K$ denotes the Brauer equivalence class of the central simple $K$-algebra defined by two generators $x,y$ subject to the relations $x^p=a, y^p=b, xy=\zeta_p yx$. If $p=2$ these are exactly the quaternion algebras over $K$.
\end{defn}
A given equivalence class $(a,b)_K$ is trivial iff the algebra splits, which can be written informally as $(a,b)_K=1$, where we have used multiplicative notation for the cyclic group $\Z / p\Z$. We have $(a,b)_K=1$ iff $a \in N(b)$ iff $b \in N(a)$. The symbol $(a,b)_K$ is sometimes informally referred to as the `Hilbert symbol'.

The cup-product pairing can now be identified with the map
\begin{equation}
K^{\times}/(K^{\times})^p \times K^{\times}/(K^{\times})^p \rightarrow \Z / p\Z \nonumber
\end{equation}
sending the pair $a,b$ to the symbol $(a,b)_K$.\\

The above discussion shows that the invariants of a $p$-Demushkin field, and the cup-product, have natural interpretations in terms of the field arithmetic. It turns out that the property of being Demushkin itself, i.e. the criteria on dimensions and the non-degeneracy of the cup-product, can also be interpreted in terms of the field arithmetic. This observation was noted and proved in Frohn's thesis \cite{frohn}, who described it as a sort of `local reciprocity law'. This interpretation will follow as a consequence of the following more general fact from linear algebra

\begin{prop}
\label{linalg}
Let $(\, , ): V \times V \rightarrow W$ be a symmetric or skew-symmetric bilinear pairing, where $V$ and $W$ are finite-dimensional vector spaces over a finite field $L$, such that $W$ is generated by the elements $(v,w)$ for $v,w \in V$. Then the following statements are equivalent:
\begin{itemize}
\item[(1)] The pairing is non-degenerate and $dim_L(W)=1$.
\item[(2)] The map $\Psi: \{U \leq V \, | \, dim_L(U)=1 \} \rightarrow \{U \leq V \, | \, codim_L(U)=1 \}$ given by 
\begin{equation}
\Psi(Lv) = v^{\perp} = \{u \in V \, | \, (v,u)=0 \} \nonumber
\end{equation}
is a bijection, where $Lv$ denotes the 1-dimensional subspace spanned by $v$.
\end{itemize}
\end{prop}
\begin{proof}
For the direction $(1)\Rightarrow (2)$, a non-degenerate bilinear form always induces an isomorphism of $V$ with its dual $V^{*}$, which in turn induces a bijection between 1-dimensional subspaces of $V^{*}$ with the codimension-1 subspaces of $V$. Note that this does not require the field $L$ to be finite.

For the converse direction, where the assumption on $L$ is needed, we refer to \cite{frohn} Theorem 4.24.
\end{proof}

\begin{prop}(`Local Reciprocity for $p$-Demushkin fields')
\label{frohn}
Let $K$ be a $p$-Demushkin field. Then for each $a \in K^{\times} \setminus (K^{\times})^p$, $N(a)$ is a subgroup of $K^{\times}$ of index $p$, and the map
\begin{equation}
\phi: \{K(\sqrt[p]{a}): a \in K^{\times} \setminus (K^{\times})^p\} \rightarrow \{H \leq K^{\times}/(K^{\times})^p : \text{H has index p} \} \nonumber
\end{equation}
given by $K(\sqrt[p]{a}) \mapsto N(a)$ is a bijection between Galois extensions of degree $p$ and subgroups of $K^{\times}/(K^{\times})^p$ of index $p$. Conversely, any field $K$ containing $\zeta_p$ for which $\phi$ is a bijection is necessarily Demushkin.
\end{prop}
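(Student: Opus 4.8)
The statement is essentially a dictionary turning the cup-product pairing into norm subgroups, so my plan is to transport the whole problem into the $\mathbb{F}_p$-vector space $V:=K^\times/(K^\times)^p\cong H^1(G_K(p),\mathbb{Z}/p\mathbb{Z})$ and argue by linear algebra. Write $W:=H^2(G_K(p),\mathbb{Z}/p\mathbb{Z})\cong{}_pBr(K)$ and let $\beta\colon V\times V\to W$ be the cup product, which by the discussion above is the Hilbert symbol $(\,\cdot\,,\,\cdot\,)_K$. The key fact I lean on is the given equivalence $(a,b)_K=1\iff a\in N(b)\iff b\in N(a)$: pulling $N(b)$ back along $K^\times\twoheadrightarrow V$, it says precisely that $N(b)=\ker\big(\beta(-,b)\colon V\to W\big)$ and that the left and right radicals of $\beta$ agree. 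Kummer theory (using $\zeta_p\in K$) identifies the source of $\phi$ with the set of lines of $V$ and its target with the set of hyperplanes (index-$p$ subspaces) of $V$.

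For the forward implication I assume $K$ is Demushkin, so $V$ is finite-dimensional, $W\cong\mathbb{Z}/p\mathbb{Z}$, and $\beta$ is non-degenerate. For $a\neq0$ non-degeneracy makes $\beta(-,a)$ a nonzero functional $V\to\mathbb{Z}/p\mathbb{Z}$, so $N(a)=\ker\beta(-,a)$ has index $p$; this is the index claim and shows $\phi$ is well defined. Injectivity is the observation that two hyperplanes $\ker\beta(-,a)$ and $\ker\beta(-,b)$ coincide only if the functionals are proportional, i.e.\ $\beta(-,a-\lambda b)=0$ for some $\lambda$; then $a-\lambda b$ lies in the radical, hence is $0$, so $\langle a\rangle=\langle b\rangle$ and $K(\sqrt[p]{a})=K(\sqrt[p]{b})$. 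Surjectivity is where finite-dimensionality is used: non-degeneracy makes $a\mapsto\beta(-,a)$ an injection $V\hookrightarrow V^\ast$, which for $\dim V<\infty$ is an isomorphism, so every hyperplane $\ker f$ equals some $N(a)$. This half is routine once the dictionary is in place.

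The converse is the substantive part. Assuming $\phi$ is a bijection, I would first recover finiteness by counting: the source has $(|V|-1)/(p-1)$ elements and the target has $(|V^\ast|-1)/(p-1)$, and if $\dim_{\mathbb{F}_p}V$ were infinite then $V^\ast$ would have strictly larger cardinality than $V$, precluding any bijection; hence $\dim_{\mathbb{F}_p}V=n<\infty$, which is condition (i) and forces $G_K(p)$ to be finitely generated. Next, for each $b\neq0$ the hypothesis that $N(b)=\ker\beta(-,b)$ has index $p$ says exactly that $\beta(-,b)$ has rank $1$, with one-dimensional image $\ell_b\subseteq W$ depending only on $\langle b\rangle$. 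The heart of the matter is to show all $\ell_b$ coincide: by graded-commutativity, whenever $\beta(x,y)\neq0$ this nonzero value lies in $\ell_x\cap\ell_y$ and forces $\ell_x=\ell_y$, and since no vector space is the union of two proper subspaces there is, for any $b_1,b_2$, a $c$ with $\beta(c,b_1)\neq0\neq\beta(c,b_2)$, giving $\ell_{b_1}=\ell_c=\ell_{b_2}$. Thus the image of $\beta$ spans a single line; since $W\cong{}_pBr(K)$ is generated by the symbols $\beta(a,b)$ (Merkurjev--Suslin), that line is all of $W$ and $\dim W=1$, which is condition (ii). Condition (iii) is then immediate: for $a\neq0$ the group $N(a)$ is proper, so some $x$ has $\beta(a,x)\neq0$ in $W\cong\mathbb{Z}/p\mathbb{Z}$, and the radical of $\beta$ is trivial.

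I expect the main obstacle to be precisely the middle step of the converse, namely deducing $\dim H^2=1$. The rank-one images $\ell_b$ come for free from the index-$p$ hypothesis, but pinning them to a common line relies on graded-commutativity together with the elementary covering argument, and the final upgrade from `all symbols lie in a line' to `$H^2$ equals that line' is genuinely non-formal: it rests on the deep input that $p$-torsion Brauer classes are sums of symbols. By contrast the forward direction and the cardinality count are soft, so if anything requires extra care it will be this spanning statement.
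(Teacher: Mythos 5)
Your proposal is correct and follows essentially the same route as the paper's proof: the norm group $N(a)$ is identified with the kernel of the Hilbert-symbol functional so that well-definedness, the index-$p$ claim and bijectivity of $\phi$ all reduce to non-degeneracy plus finite-dimensionality, while the converse uses exactly the paper's two ingredients, namely that a group is not the union of two proper subgroups (to compare the lines $\ell_b$, which the paper phrases as ``every nonzero symbol is a power of a fixed one'') and that $H^2\simeq {}_{p}Br(K)$ is generated by symbols. The one genuine addition is your cardinality argument deriving $\dim_{\mathbb{F}_p}H^1(G_K(p),\mathbb{Z}/p\mathbb{Z})<\infty$ from the existence of the bijection, whereas the paper simply assumes at that point that $G_K(p)$ is finitely generated; this is a minor but welcome tightening, and the rest is the same argument in additive, linear-algebraic notation.
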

\begin{proof}
Apply Proposition \ref{linalg} in the case where $V = H^1(G_K(p), \Z / p\Z), W = H^2(G_K(p), \Z / p\Z)$, both being finite-dimensional $\mathbb{F}_p$-spaces, and the pairing is the cup-product. Here we note that the Merkurjev-Suslin theorem implies that $W \simeq {_{p}Br(K)}$ is indeed generated by the equivalence classes $(a,b)_K$.
\end{proof}

\begin{rmk}
In the case $p=2$, we get from this that in a 2-Demushkin field, $N(a)=N(b)$ if and only if $a$ and $b$ are \emph{equal} modulo squares. This will be crucially exploited in what follows.
\end{rmk}

\subsection{The structure of $G_{\Q}(2)$.}

Theorem 1 concerns the group $G_{\Q}(2)$. This group evidently falls under case C2 of the classification (Theorem \ref{thm:classification}), but the value of $f$ is not immediately apparent. This ambiguity is taken care of by the following result of Serre.

\begin{theorem} (Serre)
Let $K$ be a finite extension of $\Q$ with $N=[K:\Q]$ odd. Then $G_K(2)$ is generated by $N+2$ elements subject to the single relation
\begin{equation}
r = x_1^2 x_2^4 [x_2,x_3]...[x_{N+1},x_{N+2}]. \nonumber
\end{equation}
\end{theorem}
\begin{proof}
This is given as Theorem 8 in \cite{labute}. The key point is that extensions of odd degree do not add any information about primitive $2^N$-th roots of unity. Hence $Im(\chi)$ is as big as possible, implying that $f=2$.
\end{proof}

\begin{rmk}
\label{rmk:diff_gal_groups_p2}
Thus $G_{F}(2) \simeq G_{F'}(2)$ for any two extensions $F, F'$ of $\Q$ with the same odd degree, irrespective of whether $F \simeq F'$ or not.
\end{rmk}

\begin{cor}
\label{cor:gq2}
The group $G_{\Q}(2)$ has rank 3, and can be generated by three elements $x, y, z$ subject to the single relation
\begin{equation}
x^2 y^4 [y,z] = 1. \nonumber
\end{equation}
The abelianisation is given by
\begin{equation}
G^{ab}_{\Q}(2)\simeq \Z / 2\Z \times \Z_2^2. \nonumber
\end{equation}
\end{cor}
\begin{proof}
The first statement follows immediately by setting $N=1$ in the preceding theorem. The statement concerning the abelianisation follows from Lemma \ref{g_ab_lemma} by noting that the $s$-invariant of $\Q$ is 1.
\end{proof}

\section{Step 1 of the main result: recovery of multiplicative structure and norm groups}

For the rest of this section and the next we now fix a field $K$ with
\begin{equation}
G_K(2) \simeq G_{\Q}(2). \nonumber
\end{equation}
\noindent In this section we aim to prove that the structure and arithmetic properties of $K^{\times}/(K^{\times})^2$ are essentially the same as those of $\Q^{\times}/(\Q^{\times})^2$.

We first define some useful notation. For $x,y \in L^{\times}$, $L$ a field, $p$ a prime, we write $x \sim y$ if $x$ and $y$ define the same equivalence class in $L^{\times}/(L^{\times})^p$, that is, if $x/y \in (L^{\times})^p$. If $x_1, \ldots, x_n \in L^{\times}$, we write
\begin{equation}
\langle x_1, \ldots, x_n \rangle \nonumber
\end{equation}
for the subspace of $L^{\times}/(L^{\times})^p$ generated by $x_1(L^{\times})^p, \ldots, x_n(L^{\times})^p$. So as a multiplicative group,
\begin{equation}
\langle x_1, \ldots, x_n \rangle = \{ x_1^{i_1}\cdots x_n^{i_n}(L^{\times})^p \,|\, i_j \in \{0, 1, \ldots, p-1\} \}. \nonumber
\end{equation} 

\begin{prop}
The group $\Q^{\times}/(\Q^{\times})^2$ has dimension 3 when viewed as a vector space over $\mathbb{F}_2$. A basis is given by the square classes of $-1,2$ and $5$. Hence $\Q^{\times}=\{\pm 1, \pm 2, \pm 5, \pm 10\}$ modulo squares.
\end{prop}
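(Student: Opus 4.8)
The plan is to exploit the explicit multiplicative structure of the local field $\Q = \mathbb{Q}_2$, reducing the whole computation to an elementary congruence criterion for squares. First I would split off the valuation. Writing $v$ for the $2$-adic valuation, every element of $\Q^{\times}$ is uniquely of the form $2^n u$ with $n \in \Z$ and $u \in \mathbb{Z}_2^{\times}$ a unit, so that $\Q^{\times} \simeq \langle 2 \rangle \times \mathbb{Z}_2^{\times}$ with $\langle 2 \rangle \simeq \Z$. Passing to square classes, the cyclic factor contributes a single copy of $\Z/2\Z$ represented by $2$, and it remains only to compute the square classes of the unit group $\mathbb{Z}_2^{\times}$.

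The heart of the argument is the claim that a unit $u \in \mathbb{Z}_2^{\times}$ is a square in $\Q$ if and only if $u \equiv 1 \pmod 8$. The forward implication is immediate: any unit is odd, hence congruent to one of $1,3,5,7 \pmod 8$, and each of these squares to $1 \pmod 8$. For the converse I would invoke Hensel's lemma in the form appropriate to residue characteristic $2$. Taking $f(X) = X^2 - u$ with $u \equiv 1 \pmod 8$ and the approximate root $x_0 = 1$, one has $v(f(1)) = v(1-u) \geq 3$ while $v(f'(1)) = v(2) = 1$, so the refined criterion $v(f(x_0)) > 2\,v(f'(x_0))$ is satisfied and an exact square root exists. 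This yields $\mathbb{Z}_2^{\times}/(\mathbb{Z}_2^{\times})^2 \simeq (\Z/8\Z)^{\times} \simeq (\Z/2\Z)^2$.

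With the unit computation in hand the proposition closes quickly. Choosing $-1 \equiv 7$ and $5 \equiv 5 \pmod 8$ as representatives of two of the nontrivial classes, and noting $-5 \equiv 3 \pmod 8$ so that $-1, 5$ are $\F$-independent, I obtain that $\{-1,5\}$ is a basis of the unit square classes; adjoining the valuation generator $2$ shows $\{-1,2,5\}$ is an $\F$-basis of $\Q^{\times}/(\Q^{\times})^2$ and hence $q_2(\Q)=3$. Enumerating the $2^3 = 8$ products of these three generators produces exactly the representatives $\{\pm 1, \pm 2, \pm 5, \pm 10\}$, as claimed.

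The only genuinely delicate point is pinning down the correct modulus in the key claim: one must work modulo $8$ rather than modulo $4$, and correspondingly the naive form of Hensel's lemma is unavailable since $f'(1) = 2$ fails to be a unit at $p = 2$. The proof therefore hinges on the refined Hensel criterion involving $v(f'(x_0))$, and it is precisely the fact that $v(2) = 1$ that forces the exponent $3$ (equivalently, the modulus $8$) to appear rather than the exponent one would naively expect in odd residue characteristic.
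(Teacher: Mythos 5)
Your proof is correct and is precisely the standard argument that the paper defers to by citing Serre's \emph{Course in Arithmetic}, Chapter II: split $\mathbb{Q}_2^{\times}\simeq 2^{\mathbb{Z}}\times\mathbb{Z}_2^{\times}$ and use the refined Hensel criterion to show a $2$-adic unit is a square if and only if it is $\equiv 1 \pmod 8$. Nothing to add.
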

\begin{proof}
The dimension of the vector space matches the rank of $G_{\Q}(2)$, which we know to be $3$ by Corollary \ref{cor:gq2}. The fact that $-1, 2$ and $5$ form a basis is explained in e.g. \cite{serre3} (Chapter 24, Section 4: see the discussion following Lemma 3).
\end{proof}

We will show that the same is true for $K^{\times}/(K^{\times})^2$. Indeed, we will show the stronger statement that any relation between square classes of \emph{algebraic} elements in $\Q$ also holds in $K$. For example, in $\Q$, it is true that $3 \sim -5$, so the same will also hold in $K$. This will be made precise in Proposition \ref{normprop} below. In order to do so we will want to make use of the field-theoretic description of the Demushkin structure of $G_K(p)$ described in the previous section. We therefore first need to prove that $K$ is genuinely 2-Demushkin, or in other words that $\text{char}(K) \neq 2$. As a preliminary step towards this and more, we prove the following lemma.

\begin{lemma}\label{lemma1}
\begin{itemize}
\item[(i)] There is a quadratic extension of $\Q$ which does not embed into a $\Z / 4\Z$-extension of $\Q$, that is, a Galois extension $L/\Q$ with Galois group $Gal(L/\Q) \simeq \Z / 4\Z$.
\item[(ii)] Any finite extension $F/\Q$ containing $\zeta_8$ must have degree at least 4.
\end{itemize}
\end{lemma}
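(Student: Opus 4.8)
The two parts are both instances of a single embedding problem: in each case we are given a quadratic extension and must decide whether it sits inside a larger cyclic $2$-power extension. The plan is to dispatch (i) by a direct Galois-theoretic argument with complex conjugation, and to reduce (ii) to a single local condition at the prime above $2$, which is then settled by local class field theory.

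For (i) I would take the quadratic extension $\mathbb{Q}(\sqrt{-1})=\mathbb{Q}(i)$ itself. Suppose for contradiction that $\mathbb{Q}(i)\subset L\subset\overline{\mathbb{Q}}$ with $\mathrm{Gal}(L/\mathbb{Q})=\langle\sigma\rangle\cong\mathbb{Z}/4$. Complex conjugation $c$ preserves the Galois extension $L$ and restricts to an element of $\mathrm{Gal}(L/\mathbb{Q})$ of order $2$, since $c(i)=-i\neq i$. But $\langle\sigma\rangle$ has a unique element of order $2$, namely $\sigma^2$, whose fixed field is the unique quadratic subextension of $L$; as $\mathbb{Q}(i)$ is a quadratic subextension, it must coincide with this fixed field. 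Thus $c=\sigma^2$ fixes $\mathbb{Q}(i)$ pointwise, contradicting $c(i)=-i$. (Equivalently, the obstruction to the embedding problem is the class $(-1,-1)_{\mathbb{Q}}\in{}_2\mathrm{Br}(\mathbb{Q})$, which is nontrivial because it is nontrivial at the real place.)

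For (ii) the key point is that a cyclic degree-$8$ extension of $\mathbb{Q}(i)$ is abelian, so a hypothetical embedding corresponds to a surjection $\psi\colon G_{\mathbb{Q}(i)}\to\mathbb{Z}/8$ whose reduction modulo $2$ cuts out the given quadratic extension $\mathbb{Q}(i)(\sqrt a)$. Restricting $\psi$ to a decomposition group at the prime $\mathfrak{p}\mid 2$ produces a local solution over $\mathbb{Q}_2(i)$; hence it suffices to exhibit $a\in\mathbb{Q}(i)^{\times}$ for which $\mathbb{Q}_2(i)(\sqrt a)$ embeds into no cyclic octic over $\mathbb{Q}_2(i)$. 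I would analyse this locally: since $\zeta_4\in\mathbb{Q}_2(i)$ but $\zeta_8\notin\mathbb{Q}_2(i)$, one has $\mathbb{Q}_2(i)^{\times}\cong\pi^{\mathbb{Z}}\times\mu_4\times\mathbb{Z}_2^2$ with $\mu_4=\langle i\rangle$ accounting for all the $2$-power torsion. Dualising, the quadratic character $\chi_a=(a,\,\cdot\,)_{\mathbb{Q}_2(i)}$ is of the form $4\psi_{\mathfrak p}$ for an order-$8$ character $\psi_{\mathfrak p}$ exactly when it is divisible by $4$ in the character group, i.e. exactly when it is trivial on $\mu_4=G[4]$, i.e. exactly when the Hilbert symbol $(a,i)_{\mathbb{Q}_2(i)}$ is trivial. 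Since $b\mapsto(b,i)_{\mathbb{Q}_2(i)}$ is a nontrivial character of $\mathbb{Q}_2(i)^{\times}$, with kernel $N_{\mathbb{Q}_2(\zeta_8)/\mathbb{Q}_2(i)}(\mathbb{Q}_2(\zeta_8)^{\times})$, and since $\mathbb{Q}(i)$ is dense in $\mathbb{Q}_2(i)$, I can choose $a\in\mathbb{Q}(i)^{\times}$ with $(a,i)_{\mathbb{Q}_2(i)}=-1$; such an $a$ is automatically a nonsquare, and $\mathbb{Q}(i)(\sqrt a)$ then embeds into no cyclic octic over $\mathbb{Q}(i)$.

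The main obstacle is the local analysis in (ii), specifically the global-to-local passage. One must guard against the worry that a genuine degree-$8$ cyclic extension whose completion at $\mathfrak p$ is a priori of smaller degree might evade the local condition; this is resolved by the relation $4\psi_{\mathfrak p}=\chi_{a,\mathfrak p}$, which forces $\chi_{a,\mathfrak p}$ to be divisible by $4$ independently of the order of $\psi_{\mathfrak p}$. In particular I use only the easy (local-necessary) direction and thereby sidestep the delicate special case of Grunwald--Wang that obstructs the converse. The remaining ingredients are the determination of $\mathbb{Q}_2(i)^{\times}$ modulo squares and the verification that $(\,\cdot\,,i)_{\mathbb{Q}_2(i)}$ is nontrivial, both of which are routine local computations that I would not carry out in full here.
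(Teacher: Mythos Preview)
Your arguments are written over the rational numbers, but in this paper the symbol $\Q$ denotes $\mathbb{Q}_2$ (see the macro definitions in the preamble). This misreading creates a genuine gap in~(i) and an unnecessary detour in~(ii).

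For (i), the complex-conjugation argument is inapplicable: $\mathbb{Q}_2$ has no archimedean place, so there is no involution $c$ to invoke. Your parenthetical alternative---that the obstruction to embedding $\Q(i)$ in a $\mathbb{Z}/4\mathbb{Z}$-extension is the class $(-1,-1)\in{}_2\mathrm{Br}(\Q)$---is the correct idea, but its nontriviality must be checked $2$-adically (for instance, $-1$ is not a sum of two squares modulo $4$), not ``at the real place''.

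For (ii), once the base is recognised as $\mathbb{Q}_2(i)$, your global-to-local passage evaporates: the local analysis you carry out over $\mathbb{Q}_2(i)$ \emph{is} already the whole proof, and the density argument and the Grunwald--Wang caveat are artefacts of the misreading. That local analysis is correct and amounts to the observation that a character of $\mathbb{Z}/4\mathbb{Z}\times\mathbb{Z}_2^3$ to $\mathbb{Z}/2\mathbb{Z}$ lifts to $\mathbb{Z}/8\mathbb{Z}$ if and only if it kills the torsion factor.

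The paper's proof is shorter and treats both parts uniformly: it reads the claims directly off the shapes $G_{\Q}(2)^{ab}\simeq\mathbb{Z}/2\mathbb{Z}\times\mathbb{Z}_2^2$ and $G_{\Q(i)}(2)^{ab}\simeq\mathbb{Z}/4\mathbb{Z}\times\mathbb{Z}_2^3$ supplied by Fact~\ref{factA}, noting that a surjection onto $\mathbb{Z}/2\mathbb{Z}$ hitting the torsion summand cannot lift to $\mathbb{Z}/4\mathbb{Z}$ (respectively $\mathbb{Z}/8\mathbb{Z}$). Your Hilbert-symbol reformulation in~(ii) is this same computation dressed in local class field theory; once you drop the spurious global layer, the two approaches coincide.
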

\begin{proof}
By Corollary \ref{cor:gq2},
\begin{equation}
G^{ab}_{\Q}(2)\simeq \Z / 2\Z \times \Z_2^2. \nonumber
\end{equation}
By the fundamental Galois correspondence, we need to prove that there exists an index-2 (closed) subgroup $H$ of $G^{ab}_{\Q}(2)$ such that for any subgroup $H' < H$ one has $G^{ab}_{\Q}(2)/H' \not \simeq \Z/4\Z$. Picking $H$ to be the $\mathbb{Z}_2^2$ component of $G^{ab}_{\Q}(2)$ clearly qualifies. It has index 2, and if $H'$ is any  finite index subgroup of $H$ then $G^{ab}_{\Q}(2)/H'\simeq \Z/2\Z \times N$ for some non-trivial 2-group $N$: such a group can never be isomorphic to $\Z/4\Z$.

For (ii), let $F$ be such an extension. Since $\zeta_8 = 1/\sqrt{2} + i/\sqrt{2}$ and $\sqrt{2} \not \in \Q(i)$, $F$ must have degree at least 4.
\end{proof}

\begin{cor}
The characteristic  of $K$ is not 2, and $-1 \not \in K^2$. In particular, $2 \in K^{\times}$.
\end{cor}
\begin{proof}
Since the property $(i)$ in the previous lemma is equivalent to a group-theoretic statement about $G_{\Q}(2)$, it is also true of $K$. But if $\text{char}(K)=2$, or $-1 \in K^2$, then every (separable) quadratic extension \emph{does} embed into a $\Z / 4\Z$-extension. When  $\text{char}(K)=2$ this follows immediately from a theorem of Witt (\cite{witt1936konstruktion}, Satz, page 237). If $\text{char}(K) \neq 2$ and $-1 \in K^2$, then consider an arbitrary quadratic extension $K(\sqrt{a})$ of $K$. Then $Norm_{K(\sqrt{a})/K}(\sqrt{a}) = -a \not \in K^2$, as $-1 \in K^2$ and $a \not \in K^2$. Thus $\sqrt{a} \not \in K(\sqrt{a})^2$, and hence $K(\sqrt{a})$ is contained in the $\Z / 4\Z$-extension $K(\sqrt[4]{a})$.  \end{proof}


We can now freely invoke the description of the Demushkin invariants $n_K$, $s_K$ and $Im(\chi_K)$ from Section \ref{sec:demushkin_fields}. To start with, $n_K = n_{\Q}$ and hence
\begin{equation}
dim_{\mathbb{F}_2} \frac{K^{\times}}{(K^{\times})^2} = dim_{\mathbb{F}_2} \frac{\Q^{\times}}{(\Q^{\times})^2} = 3. \nonumber
\end{equation}

The following Lemma will show that -1 and 2 are independent, non-trivial square classes in $K^{\times}/(K^{\times})^2$, and that the characteristic of $K$ is in fact 0.

\begin{lemma}\label{squareclasses}
\begin{itemize}
\item[(i)] $2 \not \in K(\sqrt{-1})^2$
\item[(ii)] $-1$ is not the sum of two squares, and char($K$)=0.
\end{itemize}
\end{lemma}
\begin{proof}

To prove (i), observe that the statement of Lemma \ref{lemma1}(ii) is purely group theoretic. Indeed, by Corollary \ref{cor:invariant_s}, the statement of Lemma \ref{lemma1}(ii) is equivalent to the statement that any field extension $F/\Q$ with $s_F \geq 3$ has degree $[F:\Q] \geq 4$. This is equivalent to the statement that any closed, normal subgroup $H$ of $G_{\Q}(2)$ with index $<3$ must have an $s$-invariant $<3$. Since the $s$-invariant admits a purely group theoretic characterisation in terms of the isomorphism invariant $\chi$ (see discussion after Lemma \ref{g_ab_lemma}), this statement is purely group theoretic. Since $G_K(2) \simeq G_{\Q}(2)$, it follows that the same statement is true for $K$, which proves the claim.

For $(ii)$, note that by $(i)$, it follows that $-1$ and $2$ are independent and non-trivial square classes in $\sq$. If $-1$ were a sum of two squares, that is, if $-1 \in N(-1)$, then since $2 \in N(-1)$ also, $N(-1) = \langle -1, 2 \rangle$. So $-2 \in N(-1)$ whence $-1 \in N(-2)$, and so $N(-2) = \langle -1, 2 \rangle = N(-1)$. By Proposition \ref{frohn} and Remark 2.14, $2$ is a square: contradiction.

In particular, since in any finite field, $-1$ is a sum of two squares, the characteristic of $K$ must be 0. 
\end{proof}

By the above lemma, we may now define the field $k := K \cap \overline{\mathbb{Q}}$. That is, $k$ is the relative algebraic closure of $\mathbb{Q}$ in $K$. Our next goal is to elucidate the structure of $k$, and show that except for one `bad case', $k$ admits a `2-adic' valuation, i.e., a valuation such that the henselization $k^h$ of $k$ is isomorphic to $\Q \cap \overline{\mathbb{Q}}$. Note that this latter field can be identified with the henselization $\mathbb{Q}^h$ of $\mathbb{Q}$ with respect to the 2-adic valuation, and is elementarily equivalent to $\Q$ (cf. e.g \cite{proq}). In particular, $k$ is a subfield of $\Q \cap \overline{\mathbb{Q}}$.
\\

Before proceeding with the next proposition, let us recall some results about extensions of $p$-adic fields. 

\begin{lemma}
\label{henselizationlemma}
Let $L/\mathbb{Q}_p$ be an algebraic extension containing a primitive $p$-th root of unity $\zeta_p$.
\begin{itemize}
\item[(i)] If the extension is finite, then $dim_{\mathbb{F}_p} L^{\times}/(L^{\times})^p =[L:\mathbb{Q}_p]+2$.
\item[(ii)] If $L/\mathbb{Q}_p$ is a Galois extension 
whose degree $[L:\mathbb{Q}_p]$ is divisible by $p$, then any element in ${_{p}Br(\mathbb{Q}_p)}$ of order $p$ becomes trivial in ${_{p}Br(L)}$.
\end{itemize}
Both statements are also true if we replace $\mathbb{Q}_p$ with $\mathbb{Q}^h$, the henselization of $\mathbb{Q}$ with respect to the $p$-adic valuation.\footnote{This is an example of the well known slogan that as far as algebra is concerned, henselizations are as good as completions.}
\end{lemma}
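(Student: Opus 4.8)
The plan is to treat (i) and (ii) as standard facts about the Galois cohomology of local fields, and then transfer both to $\mathbb{Q}^h$ using that henselization does not change the relevant Galois-cohomological data.

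For (i), I would compute $\dim_{\mathbb{F}_p} L^{\times}/(L^{\times})^p$ directly from the structure of the unit group. Writing $q$ for the residue cardinality, one has $L^{\times} \simeq \mathbb{Z} \times \mu(L) \times \mathbb{Z}_p^{[L:\mathbb{Q}_p]}$, where the $\mathbb{Z}$ is generated by a uniformizer and $\mu(L)$ is the finite group of roots of unity, whose $p$-power part is nontrivial precisely because $\zeta_p \in L$. Reducing modulo $p$-th powers, the $\mathbb{Z}$-factor and the $p$-power part of $\mu(L)$ each contribute one dimension, the prime-to-$p$ part of $\mu(L)$ contributes nothing, and $\mathbb{Z}_p^{[L:\mathbb{Q}_p]}$ contributes $[L:\mathbb{Q}_p]$, giving $[L:\mathbb{Q}_p]+2$. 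Alternatively --- and more in keeping with the cohomological language used elsewhere --- I would invoke the local Euler--Poincar\'e characteristic formula: for $M = \mathbb{Z}/p\mathbb{Z}$ one has $\#H^0 \cdot \#H^2 / \#H^1 = p^{-[L:\mathbb{Q}_p]}$, and since $\zeta_p \in L$ gives $\mathbb{Z}/p\mathbb{Z} \simeq \mu_p$ as $G_L$-modules, Kummer theory identifies $H^1$ with $L^{\times}/(L^{\times})^p$ and $H^2$ with ${_{p}Br(L)} \simeq \mathbb{Z}/p\mathbb{Z}$, while $H^0 = \mathbb{Z}/p\mathbb{Z}$; solving yields $\#H^1 = p^{[L:\mathbb{Q}_p]+2}$.

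For (ii), I would use the invariant map of local class field theory, $\mathrm{inv}_{L'}\colon Br(L') \xrightarrow{\sim} \mathbb{Q}/\mathbb{Z}$ for finite $L'/\mathbb{Q}_p$, under which the restriction $Br(L_i) \to Br(L_j)$ for $L_i \subseteq L_j$ becomes multiplication by $[L_j:L_i]$. Writing $L = \bigcup_i L_i$ as the union of its finite subextensions, continuity of cohomology gives $Br(L) = \varinjlim_i Br(L_i)$, hence ${_{p}Br(L)} = \varinjlim_i {_{p}Br(L_i)}$. Any class in ${_{p}Br(L)}$ comes from some ${_{p}Br(L_i)} = \tfrac{1}{p}\mathbb{Z}/\mathbb{Z}$; the hypothesis that $p^{\infty}$ divides $[L:\mathbb{Q}_p]$ as a supernatural number means $\sup_i v_p([L_i:\mathbb{Q}_p]) = \infty$, so one can find $L_j \supseteq L_i$ inside $L$ with $p \mid [L_j:L_i]$, and then the transition map kills the class. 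Thus ${_{p}Br(L)} = 0$. The same conclusion follows more slickly from Serre's bound $\mathrm{cd}_p(H) \le \mathrm{cd}_p(G) - 1$ for a closed subgroup $H$ of supernatural index divisible by $p^{\infty}$: taking $G = G_{\mathbb{Q}_p}$ (so $\mathrm{cd}_p = 2$) and $H = G_L$ gives $\mathrm{cd}_p(G_L) \le 1$, whence $H^2(G_L, \mu_p) = {_{p}Br(L)} = 0$.

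Finally, for the statements over $\mathbb{Q}^h$, the key point is that henselization and completion share the same absolute Galois group, $G_{\mathbb{Q}^h} \simeq G_{\mathbb{Q}_p}$, compatibly with the valuation. A finite extension $L/\mathbb{Q}^h$ (resp. a closed subgroup $G_L \le G_{\mathbb{Q}^h}$) thus corresponds to a finite (resp. algebraic) extension $\hat{L}/\mathbb{Q}_p$ of the same degree, with $\zeta_p \in L \iff \zeta_p \in \hat{L}$. Since $L^{\times}/(L^{\times})^p \simeq H^1(G_L, \mu_p) \simeq H^1(G_{\hat{L}}, \mu_p) \simeq \hat{L}^{\times}/(\hat{L}^{\times})^p$ and ${_{p}Br(L)} \simeq H^2(G_L, \mu_p) \simeq H^2(G_{\hat{L}}, \mu_p) \simeq {_{p}Br(\hat{L})}$ by Kummer theory, both (i) and (ii) transfer verbatim from $\hat{L}$ to $L$. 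The one point requiring genuine care --- and the main obstacle --- is the supernatural-degree bookkeeping in (ii): one must correctly interpret divisibility by $p^{\infty}$ for a possibly infinite extension and verify that it forces a $p$-divisible relative degree at some finite stage. Everything else is a routine appeal to standard local cohomology.
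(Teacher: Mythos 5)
Your argument is correct, and for the part of the lemma that actually requires work --- the transfer from $\mathbb{Q}_p$ to $\mathbb{Q}^h$ --- you take a genuinely different route from the paper. For (i) and (ii) over $\mathbb{Q}_p$ itself the paper simply cites Serre (Galois Cohomology, II.5.6, Lemma 3 and Theorem 4); your unit-group/Euler-characteristic computation and your direct-limit argument with the invariant maps are standard proofs of those cited facts, so no real divergence there. For the henselization, however, the paper argues model-theoretically: for (i) it uses that $L$ and $\hat{L}$ are both $p$-adically closed, hence elementarily equivalent, so $q_p(L)=q_p(\hat{L})$; for (ii) it invokes Merkurjev--Suslin to reduce to cyclic algebras $(a,b)_L$, descends a norm equation to a finitely generated subfield $\tilde{F}\subset\tilde{L}$, and transfers the existential sentence ``$b$ is a norm from the degree-$p$ extension given by $a$'' along the elementary equivalence $\tilde{F}\equiv\tilde{F}\cap\overline{\mathbb{Q}}$. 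You instead use the purely Galois-theoretic fact that restriction gives an isomorphism $G_{\mathbb{Q}_p}\xrightarrow{\sim}G_{\mathbb{Q}^h}$ (Krasner), identify $G_L$ with $G_{L\mathbb{Q}_p}$, and carry $H^1(\cdot,\mu_p)$ and $H^2(\cdot,\mu_p)$ across via Kummer theory and Hilbert 90. This is cleaner and avoids both the model theory and Merkurjev--Suslin (indeed $H^2(G_L,\mu_p)\simeq{_{p}Br(L)}$ already follows from the Kummer sequence), at the cost of relying on the Galois-group comparison between a rank-one henselian field and its completion; the paper's version buys the same conclusion using only elementary equivalence of $p$-adically closed fields, which it needs elsewhere anyway. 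One caveat: your closing aside asserting a general bound $\mathrm{cd}_p(H)\le\mathrm{cd}_p(G)-1$ for closed subgroups of index divisible by $p^{\infty}$ is false for arbitrary profinite groups (closed subgroups of infinite index in a free pro-$p$ group of rank $2$ can still have $\mathrm{cd}_p=1$); the statement you want is the specific theorem for $G_{\mathbb{Q}_p}$ in Serre, which is what your primary direct-limit argument proves anyway, so nothing in the proof depends on the misstated general bound.
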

\begin{proof}
See e.g. \cite{serre2} Chapter 2, Section 5.6, Theorem 4 for the proof of $(i)$. The proof of $(ii)$ follows from the same argument used to show that if $p^{\infty}$ divides $[L:\mathbb{Q}_p]$, then ${_{p}Br(L)}=0$, and can be found in Corollary 7.1.4 of \cite{Neukirch2007} (Chapter 7). Namely, under the identification of the Brauer groups with $\mathbb{Q}/\Z$, the inclusion  $\mathbb{Q}_p \rightarrow L$ corresponds to the homomorphism $Br(\mathbb{Q}_p) \rightarrow Br(L)$ given by multiplication by the degree $[L:\mathbb{Q}_p]$. If $p$ divides this degree, then multiplying by it will kill any order $p$ elements of ${_{p}Br(\mathbb{Q}_p)}$.

To show that we may replace $\mathbb{Q}_p$ by $\mathbb{Q}^h$ in both statements, we recall the well-known consequence of Krasner's Lemma that $\overline{\mathbb{Q}}_p = \overline{\mathbb{Q}}\mathbb{Q}_p$, and so the restriction $G_{\mathbb{Q}_p} \rightarrow G_{\mathbb{Q}^h}$ is an isomorphism. Since $dim_{\mathbb{F}_p} L^{\times}/(L^{\times})^p = dim_{\mathbb{F}_p}H^1(G_L(p), \Z/p\Z)$ and ${_{p}Br(L)} \simeq H^2(G_L(p), \Z/p\Z)$, the claim follows.
\end{proof}

We will also require the following classical result of class field theory, where we are using the symbol notation introduced in Definition \ref{def:symbol}.

\begin{theorem}(Albert-Brauer-Hasse-Noether)
Let $a,b \in k$, with $k$ a number field containing $\zeta_p$. Then the symbol $(a,b)_k=1$ if and only if $(a,b)_{\hat{k}_v}=1$ for every completion $\hat{k}_v$ with respect to a valuation or an ordering $v$ on $k$. That is, the central simple algebra $(a,b)_k$ splits if and only if it splits over every completion $k_v$.
\end{theorem}

\begin{rmk}
As with the above lemma, the statement is still true if we consider henselizations and real closures rather than completions, by similar reasoning.\\
\end{rmk}

We are now ready for the crucial lemma.

\begin{lemma}\label{keylemma}
Given $k=K \cap \overline{\mathbb{Q}}$, one of the following cases holds:
\begin{itemize}
\item[(A)] The restriction map $G_K(2) \rightarrow G_k(2)$ is an isomorphism, and there exists a valuation on $k$ with henselization $k^h = \overline{\mathbb{Q}} \cap \mathbb{Q}_2$;
\item[(B)] $k$ is formally real and $G_k(2)$ is not Demushkin. Furthermore, one of the following is true:
\begin{itemize}
    \item[$B_1$:] $3 \sim 1$ or $2$ in $k$.
    \item[$B_2$:] $3$ is independent of $\pm 1, \pm 2$ in $k$, $N_k(-2) = \langle 2,3 \rangle$, and $k^{\times}/(k^{\times})^2 = \langle -1,2,3 \rangle$.
\end{itemize}
\end{itemize}
\end{lemma}
\begin{proof}
Choose any chain of number fields $k_0=\mathbb{Q} \subseteq k_1 \subseteq \ldots \subset k$ such that $k = \bigcup_{i=0}^{\infty} k_i$. By Lemma \ref{squareclasses}, $-1 \not \in N_K(-1)$, so also $-1 \not\in N_k(-1)$ and $-1 \not\in N_{k_i}(-1)$ for any $i$.  If we let $\Sigma_i$ denote the set of all orderings and valuations $v$ of $k_i$ for which $-1 \not \in N_{k_{i}^v}(-1)$, where $k_{i}^v$ is a real-closure, resp. a henselization, of $k_i$ with respect to $v$, then by the Albert-Brauer-Hasse-Noether Theorem every $\Sigma_i$ is non-empty. For $i<j$, each valuation in $\Sigma_j$ lies above a valuation in $\Sigma_i$. Now, it is easy to see that $\Sigma_0$ contains the ordering of $\mathbb{Q}$. Since it is only for $p=2$ that the Hilbert symbol $(-1,-1)_p =-1$, we see that $\Sigma_0$ consists of exactly the ordering and the 2-adic valuation, and hence every valuation in $\Sigma_i$ lies above one of these. Each $\Sigma_i$ is also finite, due to the well-known fact that the Hilbert symbol trivialises in the Henselization of all but finitely many valuations. Since the $\Sigma_i$ form an inverse system of finite, non-empty sets, their inverse limit $\Sigma_{\infty}$ is non-empty. Further, every valuation $v \in \Sigma_{\infty}$ is either archimedean (corresponding to an ordering) or dyadic, as it must lie over one of these two valuations on $k_0$. We now distinguish between two cases.

{\bf Case A:} Suppose that $\Sigma_{\infty}$ contains a dyadic valuation $v$. If we let $k^h$ denote the henselization of $k$ with respect to $v$, then $-1 \not \in N_{k^h}(-1)$. If we denote by $\mathbb{Q}^h$ a henselization of $\mathbb{Q}$ with respect to the 2-adic valuation (which we may without loss of generality take to be $\mathbb{Q}_2 \cap \overline{\mathbb{Q}}$) then there is a natural embedding $\mathbb{Q}^h \hookrightarrow k^h$. For notational convenience, we set $F:=k^h$ and $L=\mathbb{Q}^h$. We claim $F=L$.

Indeed, first notice that if $[F:L]$ is even, then all order-2 elements of ${_{2}Br(L)}$ become trivial in $F$, by Lemma \ref{henselizationlemma}(ii). But the Brauer class of $(-1,-1)$ is of order 2 in both $L$ and $F$, so this cannot be the case. Therefore, $F/L$ has odd, possibly infinite, degree. The odd degree implies that the canonical map
\begin{equation}
L^{\times}/(L^{\times})^2 \rightarrow F^{\times}/ (F^{\times})^2
\end{equation}
is injective, and the same is true if we replace $L$ by any finite subextension $L'/L$ of $F$. Hence for any such $L'$,
\begin{equation}
dim_{\mathbb{F}_2} \frac{L'^{\times}}{(L'^{\times})^2} \leq dim_{\mathbb{F}_2} \frac{F^{\times}}{(F^{\times})^2}. \nonumber
\end{equation} 

We now claim that the canonical map
\begin{equation}
k^{\times}/(k^{\times})^2 \rightarrow F^{\times}/(F^{\times})^2 \nonumber
\end{equation}
is \emph{surjective}. Indeed, if $\hat{k}$ and $\hat{F}$ are the completions of $k$ and $F$, then we have a commutative diagram
\begin{center}
\begin{tikzpicture}
  \matrix (m) [matrix of math nodes,row sep=3em,column sep=4em,minimum width=2em]
  {
     k^{\times}/(k^{\times})^2 & F^{\times}/(F^{\times})^2 \\
     \hat{k}^{\times}/(\hat{k}^{\times})^2 & \hat{F}^{\times}/(\hat{F}^{\times})^2 \\};
  \path[-stealth]
    (m-1-1) edge node [left] {} (m-2-1)
            edge [] node [below] {} (m-1-2)
    (m-2-1.east|-m-2-2) edge node [below] {}
            node [above] {$=$} (m-2-2)
    (m-1-2) edge node [right] {} (m-2-2);
\end{tikzpicture}
\end{center}
where the arrow at the bottom is an equality since $\hat{k}$ is also the completion of $k^h$. Since $k$ is dense in its completion and the non-zero squares form an open subset of $\hat{k}$, the vertical arrow on the left is a surjection. Furthermore, the vertical arrow on the right is injective: if $a \in F^{\times}$ is a square in $\hat{F}^{\times}$, then the polynomial $x^2-a$ satisfies the conditions of Hensel's lemma and is hence already solvable over the henselisation $F$. The commutativity of the diagram then implies that the horizontal arrow on the top must be a surjection, as claimed.

Now, note that since $k \subset K$,
\begin{equation}
dim_{\mathbb{F}_2} \frac{k^{\times}}{(k^{\times})^2} \leq dim_{\mathbb{F}_2} \frac{K^{\times}}{(K^{\times})^2} = 3. \nonumber
\end{equation} 
Combining this with the surjection just established, we get
\begin{equation}
dim_{\mathbb{F}_2} \frac{L'^{\times}}{(L'^{\times})^2} \leq dim_{\mathbb{F}_2} \frac{F^{\times}}{(F^{\times})^2} \leq dim_{\mathbb{F}_2} \frac{k^{\times}}{(k^{\times})^2} \leq 3 \nonumber
\end{equation}
for any finite subextension $L'/L$ of $F$. But by Lemma \ref{henselizationlemma}, 
\begin{equation}
dim_{\mathbb{F}_2} L'^{\times}(L'^{\times})^2 = [L':\mathbb{Q}^h] + 2 \geq 3
\end{equation}
and so it must be that $[L':\mathbb{Q}^h]=1$. As this is true for any such finite subextension $L'/L$ of $F$, it follows that $F = \mathbb{Q}^h$. Thus $k \hookrightarrow k^h = \mathbb{Q}^h = \Q \cap \overline{\mathbb{Q}}$. Additionally, we see that \begin{equation}
dim_{\mathbb{F}_2} \frac{k^{\times}}{(k^{\times})^2} = 3. \nonumber
\end{equation}

From this equality and the fact that $-1, 2$ and $5$ form a basis for ${\mathbb{Q}^h}^{\times}/({\mathbb{Q}^h}^{\times})^2$, it follows that $-1, 2$ and $5$ also form a basis for $k^{\times}/(k^{\times})^2$. We also know that $3 \sim -5$ in $k$, since this is true in $k^h$. Since $3 = 1+2$ is visibly in $N_k(-2)$, we have $-5 \in N_k(-2)$. Since $G_{k^h}(2)$ is Demushkin of rank 3, the norm groups of $k$ are generated by exactly 2 elements, by local reciprocity (Proposition \ref{frohn}). We now have enough information to work out all the norm groups $N_k(a)$ for $a \in \{-1,\pm 2, \pm 5, \pm 10\}$. They are as follows:
\begin{itemize}
\item $N_k(-1) = \langle 2,5 \rangle$
\item $N_k(2) = \langle -1,2 \rangle$
\item $N_k(5) = \langle -1,5 \rangle$
\item $N_k(10) =\langle -1, 10 \rangle$
\item $N_k(-2)=\langle 2,-5 \rangle$
\item $N_k(-5)=\langle -2,5 \rangle$
\item $N_k(-10)=\langle -2,-5 \rangle$
\end{itemize}
Indeed, it is easy to check that the right-hand sides are all contained in the left-hand sides. The only generators not clearly a norm of the right shape are $-5 \in N_k(-2)$ and equivalently $-2 \in N_k(-5)$, which we considered above. The rest are routine (e.g. $2=1^2+1^2, 5 = 1^2+2^2$, so $\langle 2,5 \rangle \subset N_k(-1)$). The generators are independent because they are independent already in the ambient space ${\mathbb{Q}^h}^{\times}/({\mathbb{Q}^h}^{\times})^2$. Finally, none of the norm-groups  on the left-hand side can be \emph{strictly} bigger than the corresponding right-hand side, as that would imply they are bigger also in $k^h$, which we know they are not.

It follows by Proposition \ref{frohn} that $k$ is Demushkin of rank 3. Since $k$ is relatively algebraically closed in $K$, and the primitive $2^N$-th roots of unity ($N \geq 1$) are already algebraic over $\mathbb{Q}$, we have $Im(\chi_K) = Im(\chi_k)$. Hence $G_K(2)$ and $G_k(2)$ are Demushkin groups with the same invariants, and so are isomorphic finitely generated pro-2 groups. Thus the epimorphism $G_K(2) \rightarrow G_k(2)$ is an isomorphism, by the profinite pigeon-hole principle (see \cite{ribeszal} Proposition 2.5.2).

{\bf Case B:} Suppose $\Sigma_{\infty}$ does not contain a dyadic valuation. Then $k$ is formally real. If $dim_{\mathbb{F}_2} k^{\times}/(k^{\times})^2 = 2$, then  $k^{\times}/(k^{\times})^2 = \langle -1,2 \rangle$. Because sums of squares are always positive, $N_k(-1) = \langle 2 \rangle$. Also, $1+2a^2 > 0$ for any $a \in k$, hence $N_k(-2) = \langle 2 \rangle$. But we always have $-1$ and $2$ in $N_k(2)$, due to the trivial identity $-1 = 1-2$. Therefore $N_k(2) = \langle -1, 2  \rangle$. Since the norm-groups have different size, it follows from Theorem \ref{frohn} that $k$ is not Demushkin. This puts us in sub-case $B_1$, where we note that the identity $3 = 1+2$ means $3 \in N_k(-2) = \langle 2 \rangle$, i.e. $3 \sim 1$ or $3 \sim 2$.

Suppose instead that $dim_{\mathbb{F}_2} k^{\times}/(k^{\times})^2 = 3$. Then $k^{\times}/(k^{\times})^2 = \langle -1,2,c \rangle$ for some $c \in k^{\times}$. If $N_k(-1)$ is a subspace of dimension 2 then we may assume, replacing $c$ by $-c$ if necessary, that $N_k(-1) = \langle 2,c \rangle$. In this case, by reciprocity, $-1 \in N_k(c)$. Since also $c \in N_k(c)$, and the subspace $\langle -1,c \rangle$ of $N_k(c)$ has dimension 2, the equality $N_k(c) = \langle -1,c \rangle$ is forced. It follows that $-2 \not \in N_k(c)$, and hence, reciprocally, $c \not \in N_k(-2)$. Also, since both $N_k(-1)$ and $N_k(-2)$ contain elements that are positive with respect to any ordering, we deduce firstly that $c \in N_k(-1)$ is positive and secondly that $-c \not \in N_k(-2)$. We can therefore conclude that $N_k(-2) = \langle 2 \rangle$. In particular, $k$ is not Demushkin, as before, and we are again in sub-case $B_1$.

If, on the other hand, $N_k(-1)$ is a subspace of dimension 1, then necessarily $N_k(-1) = \langle 2 \rangle$. In this case $k$ is once again not Demushkin, as $N_k(2) = \langle -1,2 \rangle$ is always true, so the norm groups have differing dimensions. Since an equality $N_k(-2) = \langle 2 \rangle$ would put us back in sub-case $B_1$, let's instead assume that $3$ is independent of $1$ and $2$, and hence $N_k(-2) = \langle 2,3 \rangle$. Because $3$ is necessarily positive with respect to the ordering, $3$ cannot be equivalent to $-1$ or $-2$ modulo squares. We may therefore, without loss of generality, set $c=3$ in this case, to get $k^{\times}/(k^{\times})^2 = \langle -1,2,3 \rangle$. This is precisely sub-case $B_2$.
\end{proof}

The appearance of the exceptional Case B essentially emerges from the ambiguity around the square class of $3$ in $K$. Since several of the dihedral extensions of degree 8 of $\Q$ are generated using $\sqrt{3}$ (see \cite{naito}), it may be possible to determine this square class directly from the structure of $G_K(2)$, as was done for $-1$ and $2$. While we were unable to do this, we will never the less prove that

\begin{prop}
\label{caseb}
Case B in Lemma \ref{keylemma} cannot occur.
\end{prop}

\noindent The proof may be found in Appendix B, and is obtained by showing that $k$ being formally real is in all cases incompatible with $K$ being Demushkin. Because the proof proceeds by using the `norm-combinatorics' machinery developed in the next section, the reader is recommended to leave the verification of this Proposition to the end. One may reasonably ask whether a group theoretic proof can be given of Proposition \ref{caseb}, for example by proving that $G_k(2)$ has to be Demushkin. It is possible to identify several potential structures of $G_k(2)$ by (i) using the arithmetic information (existence of orderings/valuations) to identify the potential Witt rings of $k$ and (ii) referring to Tables 5.2 and 5.3 of \cite{jacobware}, which give the isomorphism type of $G_k(2)$ for each such Witt ring. However, in all such cases the authors considered, the abstract group $G_k(2)$ obtained does occur as a quotient of $G_K(2)$, suggesting that Case B cannot be straightforwardly ruled out by purely group-theoretic arguments. It seems that some form of direct appeal to the field structure is necessary.

We now proceed with Case A.

\begin{prop}
\label{normprop}
An $\F$-basis for $\sq$ is given by the classes of $-1, 2$ and $5$. For any $q \in k$, $q \sim 1$ in $K$ if and only if $q \sim 1$ in $\Q$ (in particular, this holds for all $q \in \mathbb{Q}$). The quadratic norm groups $N(a)$ of $K$ are as follows:
\begin{eqnarray}
N_K(-1) &=& \langle 2,5 \rangle \nonumber \\
N_K(2) &=& \langle -1,2 \rangle  \nonumber \\
N_K(5) &=& \langle -1,5 \rangle  \nonumber \\
N_K(10) &=& \langle -1, 10 \rangle  \nonumber \\
N_K(-2) &=& \langle 2,-5 \rangle  \nonumber \\
N_K(-5) &=& \langle -2,5 \rangle  \nonumber \\
N_K(-10) &=& \langle -2,-5 \rangle  \nonumber
\end{eqnarray}
\end{prop}
\begin{proof}
By Proposition \ref{caseb}, we are necessarily in Case A of Lemma \ref{keylemma}. In that case, $-1,2$ and $5$ form a basis for $k^{\times}/(k^{\times})^2$ and they are therefore independent modulo squares. Since $\sq$ has dimension 3, these also form a basis of $\sq$. The structure of the norm groups for $K$ must be the same as that of $k$, since $\sq$ has the same basis as $k$ and the norm groups have the same size. These were calculated in the proof of the above lemma, resulting in the above list. For the last part, note that $q \sim 1$ in $K$ iff $q \sim 1$ in $k$, since if $q$ were a non-square in $k$, it could only become a square in $K$ if one of  $\pm1, \pm2, \pm5, \pm10$ become square in $k$, which we know can't happen. Since $k$ embeds into $\Q$, the same argument shows that $q \sim 1$ in $k$ iff $q \sim 1$ in $\Q$.
\end{proof}

\section{Step 2 of the main result: construction of the valuation via norm combinatorics}

In $\Q$, we can detect the valuation ring via norms by the equality
\begin{equation}
Norm(\Q(\sqrt{5})^{\times})=\mathbb{Z}_2^{\times}\cdot(\Q^{\times})^2 \nonumber
\end{equation}
which follows from the fact that $\Q(\sqrt{5})$ is the (unique) unramified quadratic extension of $\Q$.
We will use this observation along with the following construction from the theory of `rigid elements' (see e.g. \cite{ep}, Section 2.2.3) to construct the valuation of Theorem 1. The general setup is as follows.

Let $p$ be a rational prime, $F$ a field, $T 
\leqslant F^{\times}$ a subgroup containing $(F^{\times})^p$. Define the sets
\begin{eqnarray}
\val_1(T)& :=& \{ x \in F \setminus T : 1+x \in T \} \\
\val_2(T)& : =& \{x \in T : x \val_1(T) \subseteq \val_1(T) \}
\end{eqnarray} and
\begin{equation}
\val(T) := \val_1(T) \cup \val_2(T). \nonumber
\end{equation}

The following key construction of valuations has its roots in the work of Arason, Elman and Jacob (see \cite{arasonelmanjacob}) who first recognized the importance of the condition $1-\val_1(T)\val_1(T) \subseteq T$ (called ``pre-additivity" in their paper) and its connection to valuations.

\begin{lemma}\label{rigid} Suppose that the subgroup $T$ satisfies the following conditions.\\
\begin{itemize}
\item[(i)] (\emph{Existence of rigid elements}) For any $x \not \in T$, $T+xT \subseteq T \cup xT$
\item[(ii)] For any $x,y \in \val_1(T)$, one has $1-xy \in T$.
\item[(iii)] If $p=2$ then $-1 \in T$.\\
\end{itemize}

Then $\val$ is a valuation ring of $F$ with $\val^{\times} \subset T$.
\end{lemma}
\begin{proof}
This follows straightforwardly from Theorem 2.2.7 in \cite{ep} and its proof.
\end{proof}

Consider the above valuation construction with $p=2$, $F=K$ and $T = N(5)$. In this case we write $\val_1$ instead of $\val_1(T)$ etc. Notice that for $K = \Q$, the non-zero elements in $\val_1$ are those with positive, odd valuation, by the ultrametric inequality, and so $\val_2$ consists of the $2$-adic integers with even valuation. Therefore in this case $\val$ is precisely $\mathbb{Z}_2$, showing that this construction does reproduce the 2-adic valuation on $\Q$. We will show that the condition of the lemma holds for our abstract $K$ as well, and then deduce that the valuation ring $\val(N(5))$ satisfies the additional properties desired.

Condition (iii) of Lemma \ref{rigid} is trivially satisfied since $-1 \in N(5)$. Condition (i) is also straightforward when $p=2$. Indeed, for $x \not \in N(5)$, $N(5)$ and $xN(5)$ are distinct cosets of size 4, and hence their union has size 8 and thus spans the whole space: condition (i) is therefore trivially satisfied. The difficult condition is (ii), which is equivalent to the `pre-additivity' condition as a result of condition (i) (Lemma 2.6 of \cite{arasonelmanjacob}).

\begin{rmk}
The statement in \cite{ep} Theorem 2.2.7 for $p=2$ does not include condition (ii) on expressions of the form $1-xy$. Instead, it is shown that, when $p=2$, there necessarily exists a subgroup $T_1$ of $K^{\times}$ satisfying condition (ii), and that for this $T_1$ one has that $\val(T_1)$ is a valuation ring. However, there appears to be no obvious way to exclude the possibility that the subgroup $T_1$ is in fact the whole of $K^{\times}$. That is, there is no way of telling if the valuation obtained is trivial or not. To show that $T_1$ may be taken to be $T$ (i.e., to show non-triviality), one must show that condition (ii) in fact holds for $T$ itself.\\
\end{rmk}

\begin{rmk}
On the other hand, when $p>2$, the proof in \cite{ep} Theorem 2.2.7 shows that condition (i) implies condition (ii). Verifying the existence of rigid elements is therefore likely to be the key challenge in attempts to extend Theorem 1 to primes $p>2$. \\
\end{rmk}

In order to show that $N(5)$ satisfies condition (ii) of Lemma \ref{rigid}, the idea is to decompose the term $1-xy$ in several ways, all of which are visibly in certain norm groups $N(a)$. Working on a case by case basis, depending on the square classes of $x, y, 1+x$ and $1+y$, this places $1-xy$ in the intersection of several norm groups, which are known by Proposition \ref{normprop}. In all cases, the possible square classes of $1-xy$ thus obtained are always in $N(5)$. As an intermediate step, we need to establish that $\pm1, \pm5$ and $\pm 1/5 \in \val_2$, i.e., that these numbers are `units' in $\val$; of course we expect this to be true since these numbers are units in $\Z_2$. Doing this amounts to computing the square class of expressions $1+ax$ when $x \in \val_1$ and $a \in \{\pm1, \pm5, \pm 1/5 \}$. This is again done by writing $1+ax$ as a norm in several different ways, thereby severely restricting its possible square class. Chief among the identities used repeatedly is the \emph{Steinberg  relation}:
\begin{equation}
1 - x \in N(x) \nonumber
\end{equation}
for any $x \not=0,1$. We will use this identity without comment in all calculations.

These calculations show that the square class of expressions like $1+ax$, for $a \in k$, $x \in K$, is determined entirely by the square class of $x, 1+x$ and the `lattice' of norm-groups. If such a statement could be made rigorous and then proved, one could deduce that $\val(N(5))$ is a valuation ring simply because it is one for $k$. Unfortunately, such a structural proof still eludes us, and we instead resort to direct computations.

The full computations and details can be found in Appendix A.
\\

\begin{rmk}
Notice that $0 \in \val_1(T)$ for any $T$. In the calculations and lemmas established in the following, we always ignore this case, as it can easily be seen that $0$ will satisfy all the claims made, or that the resulting computation gives $0$, which we know to be in $\val_1$.\\
\end{rmk}

Before we begin, let us for ease of exposition introduce some notation. For $a_i \in K^{\times}$, we write
\begin{equation}
\{a_1, a_2, \ldots, a_n\} \nonumber
\end{equation}
as shorthand for the subset
\begin{equation}
a_1(K^{\times})^2 \cup a_2(K^{\times})^2 \cup \ldots \cup a_n(K^{\times})^2 \nonumber
\end{equation} of $K^{\times}$.
\\
\\
The key lemma, where the bulk of the computations take place, is the following:

\begin{lemma}
\label{lemma24}
Let $x \in \val_1$. Then $1+ax \in N(5)$ for $a = \pm 1, \pm2, \pm4, \pm5, \pm 1/5$. Furthermore, given a specific choice of $x \in \val_1$, the values of these expressions (modulo squares) can be uniquely constrained, and depend only on the value of $x$ and $1+x$ (modulo squares).
\end{lemma}

\begin{proof}
See Appendix A. The proof uses explicit calculations on a case by case basis, as explained above. These are tedious, but elementary, and can be carried out following a straightforward algorithm. Note that the cases $a = \pm 1, \pm 5, \pm 1/5$ are essentially capturing the fact that these numbers are units with respect to the dyadic valuation, while the cases $a = \pm 2, \pm 4$ are capturing the fact that $v(2) > 0$.
\end{proof}

We are now ready to prove the critical 

\begin{prop}
\label{propxy}
For any $x,y \in \val_1$, $1-xy \in N(5)$.
\end{prop}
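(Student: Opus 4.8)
The plan is to verify the single remaining hypothesis of Lemma \ref{rigid}, namely the rigidity condition $1-xy \in N(5) = \langle -1,5\rangle$, by combining two exact decompositions of $1-xy$ with the explicit norm lattice of Proposition \ref{normprop} and the building blocks established in Lemma \ref{lemma24} and Corollary \ref{cor45}. First I would record the available data. Since $x,y \in \val_1$ we have $x,y \notin N(5)$, so by the list in Proposition \ref{normprop} the square classes of $x$ and $y$ lie in the non-trivial coset $\{2,-2,10,-10\}$; in particular $\overline{xy} \in N(5)$ automatically. Moreover Corollary \ref{cor45} shows $\pm x, \pm 5^{\pm 1}x \in \val_1$ (and likewise for $y$), so that $1+cx \in N(5)$ for every $c$ in the list $\{\pm 1, \pm 2, \pm 4, \pm 5, \pm 1/5, \ldots\}$ supplied by Lemma \ref{lemma24} and its corollary, and the ``feedback'' element $-x/(1+x)$ again lies in $\val_1$ (as already exploited in Corollary \ref{cor45}).

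The two decompositions I would use are the Steinberg relation and a symmetric sum identity. From $(c,1-c)=1$ with $c=xy$ one gets immediately $1-xy \in N(xy)$; since $\overline{xy} \in \{1,-1,5,-5\}$ this already settles the sub-case $\overline{xy}=5$ and, in the other sub-cases, pins the Hilbert symbol $(\,\cdot\,,1-xy)$ down along one line. For the second relation I would use
\begin{equation}
2(1-xy) = (1+x)(1-y) + (1-x)(1+y), \nonumber
\end{equation}
in which both summands lie in $N(5)$ (each is a product of two elements of $N(5)$, using $1\pm x, 1\pm y \in N(5)$). Dividing through by $2(1-xy)$ and applying $(s,1-s)=1$ to the resulting partition of unity yields, after expanding by bilinearity of the Hilbert symbol and inserting the known values of the symbols $(\,\cdot\,,\pm 1)$ and $(\,\cdot\,,2)$ read off from Proposition \ref{normprop}, a second linear constraint on $(\,\cdot\,,1-xy)$, expressed through the element $-(1-x^2)(1-y^2)$, which again lies in $N(5)$. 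Together these two constraints should determine the functional $a \mapsto (a,1-xy)$ on the square-class group well enough to force $(5,1-xy)=1$, i.e.\ $1-xy \in N(5)$.

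To organise the verification I would first cut down the number of cases: replacing $x$ by $c^{-1}x$ and $y$ by $cy$ for $c \in \{\pm 1, \pm 5, \pm 1/5\} \subseteq \val_2$ leaves $xy$ (hence $1-xy$) unchanged while keeping $x,y \in \val_1$, and this action is transitive on the pairs of square classes with a fixed product $\overline{xy}$. This reduces us to four cases, one for each value of $\overline{xy} \in \{1,-1,5,-5\}$, within each of which one runs through the possible square classes of $1+x$ and $1+y$ (each ranging over $N(5)$) and reads off the relevant norm-group memberships from Proposition \ref{normprop}. In every case the intersection of the norm groups produced by the two decompositions is contained in $N(5)$, giving the claim; once it is proved, Lemma \ref{rigid} applies and $\val(N(5))$ is a non-trivial valuation ring.

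I expect the main obstacle to be the sub-cases $\overline{xy} \in \{1,-1,-5\}$, where the Steinberg relation alone does not place $1-xy$ inside $N(5)$: here one genuinely needs the second (sum) relation, and the recurring difficulty is that naive factorisations of $1-xy$ reproduce only sums of two norm-group elements, whose square class is \emph{not} controlled by Lemma \ref{lemma24}. The device that breaks this circularity is the feedback substitution $x \mapsto -x/(1+x)$, which converts the relevant sums back into expressions of the form $1+cx$ to which Lemma \ref{lemma24} applies; iterating this, exactly as in the proof of Corollary \ref{cor45}, is what ultimately lands every case in $N(5)$. The resulting bookkeeping is lengthy and purely computational, so I would relegate the full case-by-case verification to an appendix, in line with the treatment of Lemma \ref{lemma24}.
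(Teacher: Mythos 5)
Your overall strategy (case analysis on square classes, reduction via the action of $\{\pm 1,\pm 5,\pm 5^{-1}\}\subseteq\val_2$, intersecting norm groups read off from Proposition \ref{normprop}) matches the paper's, and your observation that the sub-case $\overline{xy}=5$ follows at once from the Steinberg relation is correct. But the computational engine you propose is too weak. The relation $(xy,1-xy)=1$ together with the identity $2(1-xy)=(1+x)(1-y)+(1-x)(1+y)$ yields exactly two linear constraints on the character $a\mapsto(a,1-xy)$: membership of $1-xy$ in $N(xy)$, and one further condition $\bigl(-(1-x^2)(1-y^2),\,1-xy\bigr)=\epsilon$ with $\epsilon$ computable from the square classes of $1\pm x,1\pm y$. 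These do not in general force $(5,1-xy)=1$. Concretely, take $x\sim 2$, $y\sim 2$ with $1+x\sim 1-x\sim 1+y\sim 1-y\sim 1$ (this occurs, e.g.\ $x=y=8$ in $\Q$). Then $xy$ is a square, so the first constraint is vacuous; and $-(1-x^2)(1-y^2)\sim -1$ with trivial right-hand side, so the second constraint gives only $1-xy\in N(-1)=\{1,2,5,10\}$. This does not exclude $1-xy\sim 2$ or $1-xy\sim 10$, neither of which lies in $N(5)=\{1,-1,5,-5\}$, so your two relations cannot close this case.

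The paper's proof instead rests on the four decompositions $1-xy=(1+ay)\bigl(1+(1+a^{-1}x)\frac{-ay}{1+ay}\bigr)$ for $a\in\{\pm 1,\pm 5\}$ --- that is, the ``feedback substitution'' you mention in your last paragraph is not a supplementary device but the main engine, applied four times --- and each inner factor $1+(1+a^{-1}x)y'$ is then pinned down by writing it in several further ways as a norm (e.g.\ $1+(1+x)y'=(1+x)+xy'=(1+x)(1+y')-y'$), using the refined square classes of $1\pm x$, $1\pm 5^{\pm1}x$ recorded in Table 1. In the test case above this yields $1-xy\in N(2)\cap N(-2)\cap N(-10)=\{1\}$. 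Your closing remark points in this direction, but ``iterating this \ldots is what ultimately lands every case in $N(5)$'' is precisely the content that must be (and in the paper is) carried out explicitly; as written, the proposal has a genuine gap.
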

\begin{proof}
The key point is to note that for any $a \in K$, we have the following decompositions, already present in \cite{arasonelmanjacob}:
\begin{eqnarray}
1-xy &=& (1+ay)\left(1+(1+x/a)\frac{-ay}{1+ay}\right) \nonumber \\
&=& (1+ax)\left(1+(1+y/a)\frac{-ax}{1+ax}\right) \nonumber
\end{eqnarray}
Now define the following subsets of $K^{\times}/(K^{\times})^2$:
\begin{eqnarray}
D^1_a(x,y) &=& (1+ay)N(a(1+x/a)y(1+ay)) \nonumber \\
D^2_a(x,y) &=& (1+ax)N(a(1+y/a)x(1+ax)) \nonumber \\
D_a(x,y) &=& D^1_a(x,y) \cap D^2_a(x,y) \cap N(xy) \nonumber
\end{eqnarray}
Then it is a consequence of the above decompositions that $1-xy \in D_a(x,y)$. By Lemma \ref{lemma24}, for $a =  \pm 1, \pm 5, \pm 1/5$, we can pin down the values of $1+ax, 1+ay, 1+x/a$ and $1+y/a$, and this allows one to compute the sets $D_a(x,y)$.  It turns out that intersecting all the constraints obtained in this way always uniquely pins down the value of $1-xy$ modulo squares, and one can check, case by case, that this value is always in $N(5)$. The details are found in Appendix A.
\end{proof}

\begin{cor}
The set $\val$ is a non-trivial valuation ring of $K$ with residue characteristic $2$.
\end{cor}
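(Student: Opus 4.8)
The plan is simply to assemble the work already done. Taking $p=2$, $F=K$ and $T=N(5)$, Proposition \ref{propxy} establishes precisely the hypothesis of Lemma \ref{rigid}: for all $x,y\in\val_1$ one has $1-xy\in N(5)=T$. Hence Lemma \ref{rigid} applies verbatim and yields that $\val=\val(N(5))$ is a non-trivial valuation ring of $K$, with $\val^{\times}\subseteq N(5)$ and $\val_1\cdot\val_1\subseteq\val_2$. This disposes of every assertion in the statement except the value of the residue characteristic.

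To pin down the residue characteristic it suffices to show $v(2)>0$, where $v$ is the valuation attached to $\val$; for then $2$ lies in the maximal ideal $\mathfrak{m}_{\val}$, so $2=0$ in $Kv$ and the residue field has characteristic $2$. I would prove this by showing $2\in\val_1$. Since $\val_1\subseteq K\setminus N(5)$ while $\val^{\times}\subseteq N(5)$, these two sets are disjoint, so no element of $\val_1$ is a unit of $\val$; thus $\val_1\subseteq\mathfrak{m}_{\val}$, and it is enough to place $2$ in $\val_1$.

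By the very definition of $\val_1$, membership $2\in\val_1$ amounts to the two facts $2\notin N(5)$ and $1+2=3\in N(5)$. Both are immediate from Proposition \ref{normprop}, which identifies $N(5)=\langle -1,5\rangle$: the class of $2$ is not in $\langle -1,5\rangle$ because $-1,2,5$ are independent in $\sq$; and $3\in N(5)$ because $3\sim -5$ in $\Q$ --- hence, by the transfer of square-class identities recorded in Proposition \ref{normprop}, also $3\sim -5$ in $K$ --- while $-5\in\langle -1,5\rangle$. Therefore $2\in\val_1\subseteq\mathfrak{m}_{\val}$, giving $v(2)>0$ and residue characteristic $2$.

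It is worth stressing that essentially all the difficulty of this corollary has already been absorbed into Proposition \ref{propxy} (and thence into the case analysis of Appendix A); the present step is a direct application of the rigid-element criterion of Lemma \ref{rigid} together with the explicit norm-group table. The only genuine subtlety that remains is the bookkeeping observation that $\val_1$ sits inside the maximal ideal, which is exactly what makes the single membership $3\in N(5)$ sufficient to fix the residue characteristic.
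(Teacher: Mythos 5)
Your proposal is correct and follows essentially the same route as the paper: apply Lemma \ref{rigid} via Proposition \ref{propxy} to get the non-trivial valuation ring with $\val^{\times}\subset N(5)$, then observe $2\notin N(5)$ forces $v(2)>0$. Your explicit verification that $2\in\val_1$ (via $3\sim-5\in N(5)$) merely fills in a detail the paper leaves implicit.
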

\begin{proof}
We have shown that condition (ii) of Lemma \ref{rigid} is satisfied for $T=N(5)$. Condition (iii) is trivially satisfied since $-1 \in N(5)$. Condition (i) is also straightforward: given any $x \not \in N(5)$, one easily verifies that $xN(5)$ = $K^{\times} \setminus N(5)$, and hence $N(5) \cup xN(5) = K^{\times}$. Thus Lemma \ref{rigid} guarantees that $\val = \val(N(5))$ is a valuation ring with $\val^{\times} \subset N(5)$. In particular, the valuation is non-trivial.

Since $2 \not \in N(5)$, the value of $2$ is strictly positive, whence $2$ becomes trivial in the residue field, and hence the residue field has characteristic 2.
\end{proof}
\begin{rmk}
We will choose a valuation $v$ with $\val_v = \val$, and denote the value group, maximal ideal and residue field of $v$ as $\Gamma$, $\M$ and $Kv$ respectively. \\
\end{rmk}

We now elucidate the structure of $\val_2$ further. Put
\begin{equation}
A:=\{x \in N(5) : 1+2x \in N(5)\}
\end{equation}
In $\Q$, this set coincides, by the ultrametric inequality, with the $2$-adic integers with even valuation. Hence we expect the following

\begin{lemma}
$\val_2 = A$.
\end{lemma}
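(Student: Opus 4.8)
The goal is to show that the two subsets of $K^{\times}$ agree, where
\begin{equation}
A = \{x \in N(5) : 1+2x \in N(5)\} \nonumber
\end{equation}
and $\val_2 = \{x \in N(5) : x\val_1 \subseteq \val_1\}$. Both are defined in terms of membership in the single norm group $N(5) = T$, so the plan is to prove the two inclusions $\val_2 \subseteq A$ and $A \subseteq \val_2$ separately, in each case reducing an apparently geometric statement about the valuation ring to a purely combinatorial statement about norm groups that can be settled using Proposition \ref{normprop} and the decompositions already exploited in Proposition \ref{propxy} and Corollary \ref{cor45}.

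For the inclusion $\val_2 \subseteq A$, the plan is as follows. By definition, $\val_2 \subseteq N(5)$, so it remains to show $1+2x \in N(5)$ for $x \in \val_2$. The natural move is to relate $2x$ to an element of $\val_1$: since $2 \notin N(5)$ has strictly positive valuation, $2$ lies in the maximal ideal $\M$, and the interplay $\val_1 \cdot \val_1 \subseteq \val_2$ from Lemma \ref{rigid} together with the fact that $\M \cap N(5)$-type elements behave like elements of $\val_1$ should let me write $2x$ (or a square multiple of it) as a product that lands in $\val_1$. Concretely, I would pick any nonzero $t \in \val_1$ and consider $x \cdot (2/t) \cdot t$ or similar; knowing $x \in \val_2$ forces $xt \in \val_1$, and then Lemma \ref{lemma24} applied to $xt \in \val_1$ gives $1 + 2(xt) \in N(5)$ for a suitable adjustment, after which the multiplicativity of $N(5)$ and the known square classes from Corollary \ref{cor45} convert this into $1+2x \in N(5)$. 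The bookkeeping of which square-class representatives are available in $\val_1$ is routine given the explicit norm-group list.

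For the reverse inclusion $A \subseteq \val_2$, I would take $x \in A$ and $y \in \val_1$ and must show $xy \in \val_1$, i.e. $1 + xy \in N(5)$ (equivalently, control the square class of $1+xy$). The key input is that $A$ by definition controls $1+2x$, and $x \in N(5)$ means $x$ is a unit-like element. The strategy mirrors Proposition \ref{propxy}: decompose $1+xy$ using identities of the shape
\begin{equation}
1 + xy = (1+y)\left(1 + \frac{xy - y}{1+y}\right) \nonumber
\end{equation}
and its variants with $5, -5, 1/5$ inserted (exactly the decompositions licensed by Corollary \ref{cor45}), so that each factor is visibly a value of $1 + a\cdot(\text{element of }\val_1)$ for $a \in \{\pm 1, \pm 5, \pm 5^{-1}\}$, and then pin down the square class of $1+xy$ by intersecting the finitely many norm groups in which these factors are forced to lie. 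Here the defining condition of $A$ (controlling $1+2x$) supplies precisely the extra norm-group membership needed to rule out the square classes outside $N(5)$.

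The main obstacle is the inclusion $A \subseteq \val_2$, and specifically matching the information packaged in the definition of $A$ (membership of $1+2x$ in $N(5)$) to the condition $x\val_1 \subseteq \val_1$ without having the full strength of $x \in \val_2$ available a priori. Unlike Proposition \ref{propxy}, where both $x$ and $y$ range over $\val_1$ and one has symmetric control, here $x$ only satisfies the weaker hypothesis of lying in $A$, so one must verify that this single extra constraint, together with $x \in N(5)$, is combinatorially enough to force every product $xy$ with $y \in \val_1$ back into $\val_1$. I expect this to come down to a finite case analysis over the square classes of $x$, $y$, $1+y$ (with $x$ restricted to $N(5) = \langle -1, 10 \rangle$-type classes compatible with being in $A$), entirely analogous to the casework in Appendix A, so the difficulty is organizational rather than conceptual; as with the earlier propositions, the honest statement is that it reduces to the combinatorics of the norm-group lattice of Proposition \ref{normprop} and is then a bounded, if tedious, verification.
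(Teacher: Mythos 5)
Your overall architecture (two inclusions, with the harder one reduced to a finite case analysis over square classes) matches the paper, but both halves as you describe them have problems. For $\val_2 \subseteq A$: picking $t \in \val_1$, deducing $xt \in \val_1$ and applying Lemma \ref{lemma24} only gives $1+2xt \in N(5)$, and there is no multiplicative manipulation that converts this into $1+2x \in N(5)$ --- the two quantities are related additively, not by a factor in a known square class. The intended argument is a one-liner you circle around without stating: $2$ itself lies in $\val_1$ (it is not in $N(5)=\langle -1,5\rangle$, while $1+2=3\sim -5 \in N(5)$), so $x\in\val_2$ gives $2x\in\val_1$ directly, i.e.\ $1+2x\in N(5)$.

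For $A \subseteq \val_2$ the gap is more substantive. Your proposed decomposition $1+xy=(1+y)\bigl(1+(x-1)\tfrac{y}{1+y}\bigr)$ contains the multiplier $x-1$, whose square class is not controlled by the hypotheses ($x\in A$ constrains the classes of $x$ and $1+2x$, not of $1-x$), so this factor is not of the form $1+a\cdot(\text{element of }\val_1)$ with $a\in\{\pm1,\pm5,\pm5^{-1}\}$ and the norm-group intersection you describe cannot be run. The mechanism you are missing is the same conversion as above: $x\in A$ implies $2x\in\val_1$ (hence $-2x\in\val_1$), so the already-proven Proposition \ref{propxy} applied to the pair $(-2x,y)\in\val_1\times\val_1$ yields $1+2xy\in N(5)$ for free; writing $1+2xy=(1-y)+(1+2x)y$ with $1+2x$ an explicit square (or $-5$ times a square) then supplies the additional norm-group memberships that pin down the square class of $1+2xy$. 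From there one bootstraps to $1+5xy=(1+2xy)+3xy$ using $3\sim-5$, and eliminates the bad cases via $4xy\in\val_1$ --- which uses the ring structure already established by Lemma \ref{rigid} and Proposition \ref{propxy}, namely $2xy\in\val_1\cdot\val_1\subseteq\val_2$ and $2\in\val_1$ --- concluding $5xy\in\val_1$ and hence $xy\in\val_1$. Without the step $2x\in\val_1$ feeding into Proposition \ref{propxy}, the single hypothesis $1+2x\in N(5)$ does not visibly generate enough norm-group constraints to close the case analysis. (A small slip as well: $N(5)=\langle -1,5\rangle$, not $\langle -1,10\rangle$.)
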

\begin{proof}
The inclusion $\val_2 \subseteq A$ is trivial: if $x \in \val_2$ then as $2 \in \val_1$, we have $2x \in \val_1$ so that $1+2x \in N(5)$.

For the reverse inclusion, note that if $x \in A$ then $2x \in \val_1$. Since $\val_1$ is invariant under multiplication by $a = \pm 1, \pm5$ and $\pm 5^{-1}$, we have that $1+2ax \in N(5)$. Therefore given $x \in A$, $ax$ also satisfies the conditions required to be in $A$, for any $a = \pm 1, \pm5$ and $\pm 5^{-1}$, and so $A$ is invariant under multiplication by these numbers. Suppose therefore that we can prove that for any $x \in A$ with $x \sim 1$, that $x \in \val_2$. Then as $\val_2$ is also invariant under multiplication by those numbers, it easily follows that $x \in \val_2$ also when $x \sim -1$ or $\pm 5$. Because $\val_2 \subset N(5)=\{\pm1, \pm5\}$, these are the only possible square classes of $x$. If $x \sim 1$, then in addition we know $1+2x \in N(5) \cap N(-2) = \{1, -5\}$. Thus we need only consider the following two cases:\\

\noindent \boxed{\textbf{Case 1: } 1+2x \sim 1}\\ 
We need to show that for any $y \in \val_1$, $xy \in \val_1$. Since $\val_1$ is invariant under $\pm 1, \pm 5, \pm 5^{-1}$, as before, we can assume that $y \sim 2$, whence $1-y \in N(5) \cap N(2) =\{\pm1 \}$.

\indent Suppose first $1-y \sim 1$. Then if $1+2x=a^2$, we get $1+2xy=(1-y)+a^2y \in N(-1) \cap N(-2)$. But it's also in $N(5)$ since $-2x, y \in \val_1$ whence $1-(-2x)y \in N(5)$ by Proposition \ref{propxy}. So $1+2xy \sim 1$. Hence $1+5xy = (1+2xy)+3xy \in N(-10) \cap N(10) = \{1,10\}$, using that $3 \sim -5$. If $1+5xy \sim 10$, we get $1+4xy = (1+5xy)-xy \in 2N(5)$. But since $\val$ is a ring, $2xy \in \val_2$ and so $4xy \in \val_1$, whence $1+4xy \in N(5)$, contradiction. So $1+5xy \in N(5)$, whence it follows that $5xy \in \val_1$ and hence so is $xy$.

Suppose now that $1-y \sim -1$. Then $1+2xy \in N(2) \cap N(5) \cap N(-1) = \{1\}$. Arguing as above, $1+5xy \in N(5)$  and we're done also in this case.\\

\noindent \boxed{\textbf{Case 2: }1+2x \sim -5} \\
First suppose $y \sim 2, 1-y \sim 1$. Then $1+2x = -5a^2$ for some $a$, whence $1+2xy=(1-y)-5a^2y$. Since $2x \in \val_1, 1+2xy \in N(5)$ as well. Consequently $1+2xy \in N(5) \cap N(-1) \cap N(10) = \{1\}$.

It follows that $1+xy =(1+2xy)-xy \in N(-2) \cap N(2)=\{1,2\}$. If $1+xy \sim 2$, then $1+4xy = (1+xy)+3xy \in 2N(5)$. But as $4xy \in \val_1$, $1+4xy \in N(5)$ as well, giving a contradiction. Hence $1+xy \sim 1$ and we're done.

Next suppose $y \sim 2, 1-y \sim -1$. Then as above, we get $1+2xy = (1-y)-5a^2y$ for some $a$, and so $1+2xy \in N(5) \cap -N(-10) \cap N(-1) = \{5\}$. Now $1+5xy = (1+2xy)+3xy \in 5N(2) \cap N(-10)=\{-5,10\}$. If $1+5xy \sim 10$, then $1+4xy =(1+5xy)-xy \in -2N(5) \cap N(5)$ which is empty. Hence we must have $1+5xy \sim -5$. That is, $5xy \in \val_1$, so $xy \in \val_1$ as well.

\end{proof}

\noindent Since $\val^{\times} \subset \val_2$, we have

\begin{cor}The units are
\label{corunits}
\begin{equation}
\val^{\times} = \{x \in N(5): 1+2x \in N(5) \text{ and } 2+x \in N(5)\}
\end{equation}
and the maximal ideal is the disjoint union $\mathcal{M}=\val_1 \sqcup B$ where
\begin{equation}
B:= \{x \in N(5): 1+2x \in N(5) \text{ and } 2+x \not \in N(5)\}
\end{equation}
\end{cor}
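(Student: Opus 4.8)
The plan is to read off both identities directly from the already-established equality $\val_2 = A$ together with the fact (Lemma~\ref{rigid} and the corollary following Proposition~\ref{propxy}) that $\val$ is a genuine valuation ring, so that an element of $\val$ is a unit exactly when its inverse also lies in $\val$. Two purely formal facts are worth isolating first. Since $\val_1 \subseteq K^{\times}\setminus N(5)$ whereas $\val_2 \subseteq N(5)$, the union $\val = \val_1 \cup \val_2$ is \emph{disjoint}; intersecting with $N(5)$ therefore gives $\val \cap N(5) = \val_2$. Moreover $N(5)$ is a subgroup of $K^{\times}$ by Proposition~\ref{frohn}, hence closed under inverses, and this is precisely what will let me test membership of $x^{-1}$ in $\val$ entirely inside $N(5)$, i.e.\ inside $A = \val_2$.

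The heart of the argument is then a one-line reformulation of the unit condition. Fix $x \in \val_2$; I want to decide when $x \in \val^{\times}$. As $x \in \val$ already, this is equivalent to $x^{-1} \in \val$. Since $x \in N(5)$ we have $x^{-1} \in N(5)$, so by the disjointness above $x^{-1} \in \val$ if and only if $x^{-1} \in \val \cap N(5) = \val_2 = A$, that is, if and only if $1 + 2x^{-1} \in N(5)$. Finally
\begin{equation}
1 + 2x^{-1} = \frac{2+x}{x}, \nonumber
\end{equation}
and because $x \in N(5)$ and $N(5)$ is a group, $\tfrac{2+x}{x} \in N(5)$ exactly when $2 + x \in N(5)$ (note $2+x \neq 0$, since $-2 \notin N(5)$ forces $x \neq -2$, so the division is harmless). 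Combining this with $\val_2 = A$, and using the hypothesis $\val^{\times} \subseteq \val_2$ so that no unit is overlooked, yields
\begin{equation}
\val^{\times} = \{x \in \val_2 : 2+x \in N(5)\} = \{x \in N(5) : 1+2x \in N(5) \text{ and } 2+x \in N(5)\}. \nonumber
\end{equation}

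For the maximal ideal I would simply complement inside the ring. Since $\val^{\times} \subseteq \val_2$ and $\val_1 \cap \val_2 = \emptyset$, we get $\M = \val \setminus \val^{\times} = \val_1 \sqcup (\val_2 \setminus \val^{\times})$, and by the displayed characterization $\val_2 \setminus \val^{\times} = \{x \in N(5) : 1+2x \in N(5) \text{ and } 2+x \notin N(5)\} = B$, so that $\M = \val_1 \sqcup B$.

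I do not expect a real obstacle here: unlike Lemma~\ref{lemma24} and Proposition~\ref{propxy}, this statement needs no case analysis on square classes. The single point requiring care is the legitimacy of the reduction $x^{-1} \in \val \iff x^{-1} \in A$, which rests on $N(5)$ being a group and on $\val_1$ being disjoint from $N(5)$; it is exactly this inversion trick that converts the valuation-theoretic unit condition into the norm condition $2+x \in N(5)$. It is worth noting that this route sidesteps any appeal to the finer structure of $\Gamma_v$: a naive argument comparing $v(x)$ with $v(2)$ would instead have to rule out elements of $\val_2$ with $0 < v(x) < v(2)$, information the inversion argument never needs.
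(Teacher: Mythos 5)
Your proof is correct and follows essentially the same route as the paper, which derives the corollary immediately from $\val^{\times}\subset\val_2$ together with the preceding lemma $\val_2=A$; your inversion step $1+2x^{-1}=(2+x)/x$ is exactly the intended way the condition $2+x\in N(5)$ arises. The only cosmetic quibble is that $0\in\val_1$, so $\val_1\not\subseteq K^{\times}\setminus N(5)$ literally, but since $0\notin N(5)$ the disjointness $\val\cap N(5)=\val_2$ that you actually use is unaffected.
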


\begin{prop}
$B=2\val_1$.
\end{prop}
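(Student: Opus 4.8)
The plan is to prove the two inclusions separately, the whole argument resting on a single structural fact: by Proposition \ref{frohn} the subgroup $N(5)$ has index $2$ in $\sq$, and $2 \notin N(5)$, so for every $z \in K^{\times}$ one has the dichotomy
\[
2z \in N(5) \iff z \notin N(5).
\]
(The forward direction is clear; for the converse, $z \notin N(5)$ puts $z$ in the coset $2N(5)$, so $2z \in 4N(5)=N(5)$ since $4$ is a square.) Throughout I would ignore the element $0$, as is the convention of this section: note $B \subseteq N(5) \subseteq K^{\times}$, so $0 \notin B$, whereas $0 = 2\cdot 0 \in 2\val_1$, so the identity $B = 2\val_1$ is understood on nonzero elements.

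For the inclusion $2\val_1 \subseteq B$, I would take a nonzero $y \in \val_1$ and verify the three defining conditions of $B$ for $x := 2y$. Since $y \in \val_1$ means $y \notin N(5)$, the dichotomy gives $2y \in N(5)$, so $x \in N(5)$. Next, $1 + 2x = 1 + 4y$ lies in $N(5)$ \emph{directly} by Lemma \ref{lemma24}. Finally, $2 + x = 2(1+y)$: because $y \in \val_1$ forces $1 + y \in N(5)$ (note $y \neq -1$, as $-1 \in N(5)$), the dichotomy applied to $z = 1+y$ gives $2(1+y) \notin N(5)$, i.e. $2 + x \notin N(5)$. Hence $x = 2y \in B$.

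For the reverse inclusion $B \subseteq 2\val_1$, I would take $x \in B$, set $y := x/2$, and aim to show $y \in \val_1$. Since $x \in N(5)$ we have $x \neq -2$ (as $-2 \notin N(5)$), so $1 + y = 1 + x/2$ is a genuine nonzero element of $K^{\times}$, and $y \neq 0$. As $2y = x \in N(5)$, the dichotomy applied to $z = y$ gives $y \notin N(5)$. For the second condition, observe $2 + x = 2(1 + y)$; the hypothesis $2 + x \notin N(5)$ together with the dichotomy (applied to $z = 1+y$) forces $1 + y \in N(5)$. Thus $y \notin N(5)$ and $1 + y \in N(5)$, i.e. $y \in \val_1$, giving $x = 2y \in 2\val_1$.

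The argument is short once Lemma \ref{lemma24} and the index-$2$ dichotomy are in hand, so I do not expect a genuine obstacle here; the only points requiring care are the bookkeeping of the degenerate values $x = 0$ and $x = -2$, both of which are excluded automatically by $B \subseteq N(5) \subseteq K^{\times}$. It is worth flagging that the reverse inclusion never uses the hypothesis $1 + 2x \in N(5)$ from the definition of $B$ — only $x \in N(5)$ and $2 + x \notin N(5)$ are needed — so the substantive input is entirely in the forward inclusion, and there it is exactly the fact $1 + 4y \in N(5)$ supplied by Lemma \ref{lemma24}.
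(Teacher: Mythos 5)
Your proof is correct and follows essentially the same route as the paper: the inclusion $B \subseteq 2\val_1$ is exactly the paper's argument via the index-$2$ dichotomy for $N(5)$, and for $2\val_1 \subseteq B$ the paper likewise reduces to the fact that $1+4y \in N(5)$ for $y \in \val_1$ (which you take directly from Lemma \ref{lemma24}, while the paper routes it through the preceding lemma $\val_2 = A$).
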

\begin{proof}
If $x \in B$, then as $2+x \not \in N(5)$, $1+x/2 \in N(5)$, so $x/2 \in \val_1$, whence $x \in 2\val_1$. The other direction follows easily from the previous Lemma in a similar fashion.
\end{proof}

\begin{prop}
$v(2)$ is a minimal positive element in $\Gamma$.
\end{prop}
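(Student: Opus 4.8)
The plan is to show directly that no element of the value group lies strictly between $0$ and $v(2)$. Since every positive element of $\Gamma$ is realised as $v(y)$ for some $y$ in the maximal ideal $\M$, it suffices to prove that $v(y) \geq v(2)$ for every $y \in \M$. I will exploit the decomposition $\M = \val_1 \sqcup B$ from Corollary \ref{corunits} together with the identity $B = 2\val_1$ from the immediately preceding proposition, and the fact that, since $5 \notin K^2$, Proposition \ref{frohn} makes $N(5)$ a subgroup of index $2$, so that $K^{\times}/N(5) \simeq \Z/2\Z$.

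The case $y \in B$ is immediate: writing $y = 2z$ with $z \in \val_1 \subseteq \M$ (so $v(z) > 0$) gives $v(y) = v(2) + v(z) > v(2)$. For the case $y \in \val_1$, I will argue by contradiction, assuming $0 < v(y) < v(2)$. Then $v(2/y) = v(2) - v(y) > 0$, so $2/y \in \M = \val_1 \sqcup B$, and I will exclude both alternatives. If $2/y \in B = 2\val_1$, then $1/y \in \val_1 \subseteq \val$, contradicting $v(1/y) = -v(y) < 0$. If instead $2/y \in \val_1$, then both $y$ and $2/y$ lie outside $N(5)$ by the very definition of $\val_1$ as a subset of $K^{\times} \setminus N(5)$; since the two non-trivial cosets of $N(5)$ coincide (the quotient being $\Z/2\Z$), their product $2 = y \cdot (2/y)$ lands in $N(5)$, contradicting $2 \notin N(5)$. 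Both alternatives being impossible, we conclude $v(y) \geq v(2)$, and combining the two cases finishes the argument.

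The conceptual crux is the observation that $\val_1$ consists precisely of the \emph{non-norms} inside $\M$ while $B$ consists of the norms, so that membership in $N(5)$ plays the role of the parity of the $2$-adic valuation in $\Q$; the identity $B = 2\val_1$ then mirrors the elementary fact that twice an odd integer is even, while the index-$2$ property of $N(5)$ supplies the statement that a sum (here, product) of two ``odd'' valuations cannot equal a single ``odd'' valuation such as $v(2)$. I do not expect a genuine obstacle at this stage: the substantive work was already done in proving $\M = \val_1 \sqcup B$ and $B = 2\val_1$, and what remains is the purely formal manipulation of $2/y$ against the coset structure of $K^{\times}/N(5)$.
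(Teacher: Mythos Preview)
Your proof is correct and follows essentially the same approach as the paper: both argue by contradiction via the decomposition $\M = \val_1 \cup 2\val_1$ and analyse $2/y$. The only cosmetic difference is that in the sub-case $y,\,2/y \in \val_1$ you invoke the index-$2$ coset structure of $K^{\times}/N(5)$ directly to force $2 \in N(5)$, whereas the paper cites $\val_1 \cdot \val_1 \subset \val_2$ from Lemma~\ref{rigid} to reach the same contradiction; these are two phrasings of the same fact.
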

\begin{proof}
Since $\val_1 \subset \M$, $v(2) > 0$. Now suppose we have $x \in \M$ with $v(x) < v(2)$, i.e., $2/x \in \M = \val_1 \cup 2\val_1$. If $2/x \in 2\val_1$, then $x^{-1} \in \val_1$, contradicting $v(x) > 0$. Thus $2/x \in \val_1$. If it were now the case that $x \in \val_1$, then $x \not \in N(5)$ by definition, so that $x = \pm 2, \pm 10$ modulo squares. Dividing by $x^2$ implies that the same is true for $x^{-1}$. Thus $2/x = \pm 1, \pm 5$ modulo squares, and hence $2/x \in N(5)$, contradicting the fact that $2/x \in \val_1$. Hence it must be that $x \in 2\val_1$, say $x = 2y, y \in \val_1$. But then $v(2) > v(x) = v(2y) > v(2)$, which is absurd.
\end{proof}

Since $v(2) > 0$, it is clear that $Kv$ has characteristic 2. We now prove that the residue field is exactly $\mathbb{F}_2$ and that the valuation is 2-henselian.

\begin{prop}\label{residue}
The residue field $Kv = \mathbb{F}_2$.
\end{prop}
\begin{proof}
It is possible to prove, using similar computations as before, that if $x \in \val^{\times}$ then $1+x \in \M$, implying that $\val / \M$ contains only two elements. However, an anonymous reviewer suggested a simpler argument inspired by the Galois characterisation of $p$-adic fields, which we gratefully reproduce here.

First note that if $a, b \in \val$ are such that $1+2a \sim 1+2b$, then $a-b \in \mathcal{M}$. Indeed, suppose $1+2a = \lambda x^2$ and $1+2b = \lambda y^2$ for some $\lambda \in \{\pm 1, \pm 2, \pm 5, \pm 10\}$. Then clearly $v(1+2a) = v(1+2b) = v(\lambda) = 0$. Since $2(a-b) = \lambda(x^2-y^2)$, we have $v(x^2-y^2)>0$. It follows that $x = \pm y + 2z$ for some $z \in \val$, and hence $v(x^2-y^2) = v(\pm 4zy + 4z^2) \geq v(4)$, giving $v(a-b)>0$ as desired.

Now let $a_1, \ldots a_n$ be representatives in $\val$ of the non-zero elements of $Kv$. The above observation shows that the set $\{1+2a_1, \ldots 1+2a_n, 1, 5 \}$ cover $n+2$ distinct square classes, where we have used that $1 = 1+2\cdot 0$, $5 = 1+2\cdot 2$ and that $v(2)$ is minimal. Condition (4.4) implies that every element of this set belongs to $N(5)$, so by multiplying them all by 2, we conclude that there are at least $2n+4$ square classes in total. Since the total number of square classes in $K$ is precisely 8, this implies that $n \leq 2$. $Kv$ cannot therefore be larger than $\mathbb{F}_2$, and so they must coincide.
\end{proof}

\begin{prop}
The valuation $\val$ is $2$-henselian.
\end{prop}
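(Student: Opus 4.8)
The goal is to prove that the valuation $\val$ constructed in Case A is $2$-henselian, and the stated strategy is to bootstrap this from the $2$-henselianity of the restriction of $\val$ to $k = K \cap \overline{\mathbb{Q}}$. Recall that in Case A we have $k \hookrightarrow k^h = \mathbb{Q}^h = \mathbb{Q}_2 \cap \overline{\mathbb{Q}}$, so $k$ carries the $2$-adic valuation, which is $2$-henselian on $k$ essentially by definition of $\mathbb{Q}^h$. The plan is to show that $\val$ restricts to (a valuation equivalent to) this $2$-adic valuation on $k$, and then to promote henselianity from $k$ to $K$ using the criterion in the Fact following the definition of $p$-henselianity: it suffices to check that $\val$ extends uniquely to every degree-$2$ Galois extension of $K$.

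First I would pin down the restriction $\val|_k$. By Proposition \ref{normprop}, the norm group $N_K(5) = \langle -1, 5\rangle$ restricts on $k$ to $N_k(5) = \langle -1,5\rangle$, and the whole combinatorial construction $\val = \val(N(5))$ is defined purely in terms of the square classes and norm groups, which Proposition \ref{normprop} shows agree between $k$, $K$ and $\mathbb{Q}$. Hence $\val \cap k = \val(N_k(5))$ is exactly the valuation ring one obtains by running the same construction over $k$, and over $k \subset \mathbb{Q}_2 \cap \overline{\mathbb{Q}}$ this is the restriction of the $2$-adic valuation ring $\mathbb{Z}_2 \cap k$ (just as over $\mathbb{Q}$ it gives $\mathbb{Z}_2$). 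So $\val|_k$ is $2$-henselian on $k$: its henselization is $\mathbb{Q}_2 \cap \overline{\mathbb{Q}}$.

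The central step is the promotion. By the Fact, it is enough to show that for each quadratic extension $L = K(\sqrt{a})/K$, $a \in K^\times \setminus (K^\times)^2$, the valuation $\val$ extends uniquely to $L$. The key leverage is that $\sq$ has basis $-1,2,5$ all lying in $k^\times$, so every quadratic extension of $K$ is of the form $K(\sqrt{a})$ with $a \in k^\times$; that is, every degree-$2$ extension of $K$ is defined over $k$, namely $K(\sqrt{a}) = K \cdot k(\sqrt{a})$. Since $\val|_k$ is $2$-henselian, $\val|_k$ extends uniquely to $k(\sqrt{a})$. I would then argue that unique extension to the base-change $K(\sqrt a) = K \otimes_k k(\sqrt a)$ follows: non-uniqueness of the extension to $K(\sqrt a)$ would force $a$ to be a square in the $2$-henselization of $K$, or equivalently would produce a splitting of the quadratic extension after henselization that does not already occur over $k$, contradicting that $a$ is a nonsquare detected by the same norm-group data in $k$, $K$, and $k^h$. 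Concretely, a valuation is $2$-henselian iff the number of extensions to each $K(\sqrt a)$ is one, and this number is governed by whether $a$ becomes a square in the henselization; since $a \in k^\times$ and the square-class of $a$ in the henselization is already determined over $k$ (where $\val$ is $2$-henselian), uniqueness transfers to $K$.

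The main obstacle is making the base-change argument airtight: one must rule out that a nonsquare $a \in k^\times$ acquires extra valuation-extensions over $K$ that it did not have over $k$, i.e., that the ramification/splitting behaviour is genuinely inherited under the (possibly transcendental, non-algebraic) extension $K/k$. The clean way to handle this is to use that $\val$ has residue field $\F$ (Proposition \ref{residue}) and $v(2)$ minimal positive with $[\Gamma:2\Gamma]=2$, so the decomposition of a quadratic extension into its unramified, ramified, and inseparable-residue parts is forced by the square class of $a$ among $\{-1,2,5,\ldots\}$; one checks directly, using that $k(\sqrt a)$ is $2$-henselian over $\val|_k$ and that residue field and value-group invariants of $\val$ match those of $\val|_k$, that in each case there is a unique prolongation. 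Here I would lean on a standard criterion from \cite{ep}: a valuation is $p$-henselian iff it is comparable to the canonical $p$-henselian valuation and the residue data are correct; since our $\val$ has the same residue field $\F$ and value-group data as the $2$-adic valuation on $k^h$, and $k^h$ is $2$-henselian, the unique-extension property propagates to $K$.
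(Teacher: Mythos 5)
Your overall strategy coincides with the paper's: reduce to unique prolongation to each quadratic extension $K(\sqrt{a})$, note that $a$ may be taken in $k^{\times}$ since $-1,2,5$ generate $\sq$, and leverage the known behaviour of the $2$-adic valuation on $k^h=\mathbb{Q}_2\cap\overline{\mathbb{Q}}$. But the step that carries all the weight --- transferring uniqueness from $k$ to $K$ --- is asserted rather than proved. Your central claim is that non-uniqueness over $K$ would make $a$ a square in the henselization of $(K,\val)$, and that ``the square-class of $a$ in the henselization is already determined over $k$.'' That last clause is exactly what needs justification and is not formal: knowing that $a\in k^{\times}$ is a nonsquare in $k$, $K$ and $k^h$ does not by itself prevent $a$ from becoming a square in the henselization of $(K,\val)$. (Compare: $17$ is a nonsquare in $\mathbb{Q}$ but a square in the $2$-adic henselization $\mathbb{Q}^h$; being a nonsquare downstairs controls nothing upstairs.) One would have to show that the relative algebraic closure of $k$ in the henselization of $(K,\val)$ is no larger than $k^h$, i.e.\ that henselizing $K$ creates no new square roots of algebraic numbers; this requires an argument about value groups and residue fields, not about square classes in $K$. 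Your fallback --- invoking the criterion involving the canonical $p$-henselian valuation from \cite{ep} --- is circular here, since that machinery presupposes that $K$ already carries a non-trivial $2$-henselian valuation, which is precisely what is being established.

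The paper fills this gap by a direct $ef=2$ computation. For $L=K(\sqrt{5})$: since $Kv=\F$ and the residue extension of any prolongation contains that of $k^h(\sqrt{5})/k^h$ (which has $f=2$), multiplicativity of residue degrees forces $f(\val'/\val)=2$. For $a\not\sim 5$: using $\Gamma_{k^h}=\Gamma_K\cap\Gamma_{\overline{\mathbb{Q}}}$ and the fact that $(k')^h/k^h$ is ramified for $k'=k(\sqrt{a})$, one rules out $\Gamma_L=\Gamma_K$ and so $e=2$. In every case $ef=2=[L:K]$, and the fundamental inequality $\sum_i e_if_i\le[L:K]$ then leaves room for only one prolongation. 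You gesture at exactly this (``one checks directly, using that \ldots residue field and value-group invariants of $\val$ match those of $\val|_k$''), but the check is the proof; without it the argument does not close.
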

\begin{proof}
It suffices to show that $\val$ extends uniquely to every quadratic extension of $K$, by Fact 2.2. This will follow from the fundamental inequality for valuations (\cite{ep} Theorem 3.3.4) if we can show that $ef=2$ for any quadratic extension $L$ of $K$, with $e$ and $f$ the ramification and inertia degrees of $L/K$. This can be done in a number of ways, e.g. by `lifting' the ramification/inertia degrees from $k$. The following particularly simple approach was suggested by an anonymous reviewer.

Note that for any quadratic extension $L/K$, either $ef=2$ or $ef=1$. So it suffices to show that $ef \neq 1$, i.e. that the extension is not \emph{immediate}. But $L$ is necessarily obtained by adjoining a square root of $\pm 1, \pm 2, \pm 5, \pm 10$, and hence contains as a subfield $\mathbb{Q}(\sqrt{\alpha})$ for $\alpha \in \{\pm 1, \pm 2, \pm 5, \pm 10\}$. However, the prime ideal $2\mathbb{Z}$ of $\mathbb{Z}$ is either inert or ramifies in any such $\mathbb{Q}(\sqrt{\alpha})$, and so $\mathbb{Q}$ along with the 2-adic valuation admit no immediate extensions. Hence neither can $K$.
\end{proof}

We can now complete the proof of the main theorem.

\begin{theorem}
\label{mainthm}
Suppose $G_K(2) \simeq G_{\Q}(2)$. Then there is a valuation $v$ on $K$ which is $2$-henselian, has discrete value group with $v(2)$ as a minimal positive element, residue field $\mathbb{F}_2$ and $[\Gamma:2\Gamma]=2$.
\end{theorem}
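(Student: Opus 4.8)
The plan is to assemble the facts established above, once we know that we sit in Case A of Lemma~\ref{keylemma}. First I would invoke Lemma~\ref{keylemma} to split into Case A and Case B. Case B is excluded in the next section, so it remains to treat Case A, in which $k = K \cap \overline{\mathbb{Q}} \subset \Q \cap \overline{\mathbb{Q}}$ and the quadratic norm groups of $K$ are exactly those listed in Proposition~\ref{normprop}.

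In Case A I would take $v$ to be the valuation attached to the ring $\val = \val(N(5))$ produced by the rigid-element construction. Proposition~\ref{propxy} verifies the hypothesis $1-xy \in N(5)$ for all $x,y \in \val_1$, so Lemma~\ref{rigid} makes $\val$ a genuine, non-trivial valuation ring with $\val^{\times} \subseteq N(5)$. The residue characteristic is $2$ because $2 \notin N(5)$, and Proposition~\ref{residue} upgrades this to $Kv = \F$; the proposition on $v(2)$ shows that $v(2)$ is a minimal positive element of $\Gamma$, which is the sense in which the value group is discrete; and the $2$-henselianity proposition gives that $v$ is $2$-henselian. Thus every clause of the theorem except $[\Gamma : 2\Gamma] = 2$ is already in hand.

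To pin down $[\Gamma : 2\Gamma]$ I would reduce the short exact sequence $1 \to \val^{\times} \to K^{\times} \xrightarrow{v} \Gamma \to 0$ modulo squares. The valuation induces a surjection $\sq \twoheadrightarrow \Gamma/2\Gamma$, and $v(x) \in 2\Gamma$ forces $x = u z^2$ with $u \in \val^{\times}$ and $z \in K^{\times}$, so the kernel of this surjection is precisely the image of $\val^{\times}$. Hence $\Gamma/2\Gamma \cong \sq/\big(\val^{\times}(K^{\times})^2/(K^{\times})^2\big)$. Now $\val^{\times} \subseteq \val_2 \subseteq N(5) = \langle -1, 5\rangle$, while $-1$ and $5$ are themselves units by Corollary~\ref{cor45} and Corollary~\ref{corunits}; therefore the image of $\val^{\times}$ in $\sq$ is exactly the two-dimensional subspace $\langle -1, 5\rangle$. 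Since $\sq = \langle -1, 2, 5\rangle$ has dimension $3$, it follows that $\dim_{\F} \Gamma/2\Gamma = 1$, i.e. $[\Gamma : 2\Gamma] = 2$, with the nontrivial class represented by $v(2)$.

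The genuinely hard ingredient is none of this assembly but the exclusion of Case B, which is exactly why it is deferred. As remarked after Lemma~\ref{keylemma}, Case B cannot be ruled out on purely group-theoretic grounds, since the presentation $\langle a,b,c : a^2 = b^4[b,c] = 1\rangle$ does arise as a quotient of $G_K(2)$; the obstruction must instead be located in the arithmetic, via the norm-combinatorics of the next section. Granting that exclusion, the present theorem follows by collecting the Case A results above together with the short $\Gamma/2\Gamma$ computation.
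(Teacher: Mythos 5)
Your proposal is correct and follows essentially the same route as the paper: assemble the Case A results (non-triviality and residue characteristic from Proposition~\ref{propxy} and Lemma~\ref{rigid}, residue field $\F$, minimality of $v(2)$, $2$-henselianity), defer the exclusion of Case B to the next section, and compute $[\Gamma:2\Gamma]$ via $\Gamma/2\Gamma \cong K^{\times}/\val^{\times}(K^{\times})^2$ with $\val^{\times}(K^{\times})^2 = N(5)$ of index $2$. Your verification that the image of $\val^{\times}$ in $\sq$ is all of $\langle -1,5\rangle$ (using that $\pm 1, \pm 5, \pm 5^{-1} \in \val_2$, hence $\pm 1, \pm 5$ are units) is a slightly more explicit justification of the equality $\val^{\times}(K^{\times})^2 = N(5)$ than the paper records, but the argument is the same.
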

\begin{proof}
The only thing remaining to prove is that $[\Gamma:2\Gamma]=2$. Note that $\Gamma = K^{\times}/\val^{\times}$, and $\val^{\times}(K^{\times})^2=N(5)$. Since $v(\val^{\times}(K^{\times})^2)=2\Gamma$, and $N(5)$ has index 2 in $K^{\times}$, this implies that
\begin{equation}
\Gamma / 2\Gamma \simeq \frac{K^{\times}}{\val^{\times}(K^{\times})^2 } \nonumber
\end{equation}
has order 2.
\end{proof}

\begin{rmk}
Upon reviewing the proof, one finds that we didn't need all of $G_K(2)$, only the significantly smaller quotient $Gal(K''/K)$, where $K''$ is the so-called maximal $\Z/2\Z$ elementary meta-abelian extension of $K$. Indeed, Lemma \ref{lemma1} and \ref{squareclasses} clearly only need this quotient, and Pop has shown (Section 2 of \cite{popmeta}, in particular Lemmas 1, 2 and 6) that the Galois cohomology used is already seen by $Gal(K''/K)$. Therefore, Proposition \ref{normprop} can be obtained using only this quotient, and this is all that is required for the norm-combinatorics.
\end{rmk}

\section{Application: the Birational Section Conjecture}

In order to demonstrate how the main conjecture, and hence main result, can provide number theoretic consequences, we conclude with an application to the birational section conjecture. For details on this conjecture and the area of anabelian geometry it falls under, see e.g. \cite{koe3}, \cite{popmeta} and \cite{stix2012rational}. The details behind this application are surely well known to the experts, but the explicit nature of Conjecture 1 and Theorem 1 allow for a particularly direct argument.

\subsection{Basic statement of the birational section conjecture}

Recall that given a smooth, complete curve $X$ over a field $K$, there is a canonical exact sequence of Galois groups
\begin{equation}
\label{exact_sequence}
1 \rightarrow G_{\overline{K}(X)} \rightarrow G_{K(X)} \rightarrow G_K \rightarrow 1 
\end{equation}
where $K(X)$ is the function field of $X$ and $\overline{K}(X)=\overline{K} \otimes_K K(X)$. \\

Given any $a \in X(K)$, we can assign to it a `bouquet' of group-theoretic sections $s_a: G_K \rightarrow G_{K(X)}$. Indeed, let $v_a$ be the valuation on $K(X)$ corresponding to $a$, and $w$ the valuation on $\overline{K}(X)$ corresponding to a preimage of $a$ in $\overline{X}:=X \otimes_K \overline{K}$ (so $w$ extends $v$). Now let $I_w$ and $D_w$ denote the inertia and decomposition subgroups groups of $G_{K(X)}$ obtained by choosing an extension of $w$ to the separable closure $K(X)^{sep}$ of $K(X)$. Then Hilbert Decomposition Theory gives us a commutative diagram
\begin{center}
\begin{tikzpicture}[>=angle 90]
\matrix(d)[matrix of math nodes, row sep=3em, column sep=2em, text height=1.5ex, text depth=0.25ex]
{1 & G_{\overline{K}(X)} & G_{K(X)} & G_K & 1 \\
1 & I_w & D_w & G_w & 1\\};
\path[->, font=\scriptsize]
 (d-1-1) edge node[above]{} (d-1-2)
 (d-1-2) edge node{} (d-1-3)
 (d-1-3) edge node[above]{} (d-1-4)
 (d-1-4) edge node{} (d-1-5)
 (d-2-1) edge node[above]{} (d-2-2)
 (d-2-2) edge node{} (d-2-3)
 (d-2-3) edge node[above]{} (d-2-4)
 (d-2-4) edge node{} (d-2-5)
 (d-2-2) edge node{} (d-1-2)
 (d-2-3) edge node{} (d-1-3)
 (d-2-4) edge node[right]{$\simeq$} (d-1-4)
 ;
\end{tikzpicture}
\end{center}
with exact rows. Here $G_w$ denotes the Galois group of the residue field extension. It is known that the bottom row admits sections (see e.g. \cite{kpr}). Any choice of such induces a section $s_w$ of (\ref{exact_sequence}) such that $s(G_K) \subset D_w$. The set of sections $s_w$ obtained in this way is typically referred to as a `bouquet' of sections, and the members of the bouquet are said to lie over $a$. The different decomposition subgroups $D_w$ corresponding to different prolongations of $w$ to $K(X)^{sep}$ form a conjugacy class under $G_{K(X)}$. In a similar manner, if $v$ is a valuation which is trivial on $K$ and has residue field $K$, the same discussion shows that $v$ induces a `bouquet' of sections which are said to lie over $v$. We call such valuations {\bf $K$-valuations}. 

The \emph{birational anabelian section conjecture} of Grothendieck says that, for certain fields $K$, \emph{every} section of \ref{exact_sequence} lies over a unique $a \in X(K)$. This was proved in \cite{koe3} in the case where $K$ is a finite extension of $\mathbb{Q}_p$. 

\subsection{The pro-$p$ version of the birational section conjecture}

The exact sequence (5.1) does not necessarily remain exact upon taking maximal pro-$p$ quotients. Nevertheless, one can still consider sections of the canonical surjection
\begin{equation}
\label{p_exact_sequence}
G_{K(X)}(p) \twoheadrightarrow G_K(p),
\end{equation}
and show that a rational point $a \in X(K)$ gives rise a bouquet of sections $s$ with image inside a decomposition group of the associated valuation. These sections are once again said to lie over $a$. The pro-$p$ version of the birational section conjecture says that every section of (5.2) lies over such a $K$-rational point of $X$. We will show that Conjecture 1 implies the pro-$p$ birational section conjecture, assuming that $K$ is a finite extension of $\mathbb{Q}_p$ containing $\zeta_p$. Pop has shown in \cite{popmeta} that this conclusion holds already when considering a much smaller quotient, namely the maximal $\Z / p\Z$ elementary meta-abelian quotient. The pro-$p$ version follows from Pop's Theorem, but the proof we give here is independent. Both these quotient-variants of the birational section conjecture imply the standard birational section conjecture (see page 623 of \cite{popmeta}).

We also note that the generality of our main theorem means we can prove a statement for varieties, not just curves, with no additional complications. The birational analogue of the section conjecture in higher dimensions was proven by Stix in \cite{stixvarieties}. Here one finds that every section lies over a unique $K$-valuation. When $X$ is a curve, it is well known that such valuations correspond exactly to points. For higher dimensions, such valuations imply the \emph{existence} of a point, but there is no longer a bijective correspondence. In \cite{pop2017adic}, Pop generalized Stix's result to the metabelian setting, but again, our proof here is independent.

\begin{prop}\label{bsc} Assume Conjecture 1 holds, and suppose $X$ is a smooth, complete variety over $F$ of dimension $n>0$, where $F$ is a finite extension of $\mathbb{Q}_p$ containing $\zeta_p$. Then given any section $s$ of the projection
\begin{equation}
G_{F(X)}(p) \twoheadrightarrow G_F(p), \nonumber
\end{equation}
there is an $F$-valuation $w'$ on $F(X)$ with $s(G_{F}(p)) \subset D_{w'}$, the decomposition group of a prolongation of $w'$ to $F(X)(p)$. The valuation $w'$ and its prolongation to $F(X)(p)$ are uniquely determined by $s$.
\end{prop}
\begin{proof}
Let $s:G_F(p) \rightarrow G_{F(X)}(p)$ be a section, and let $K$ be the fixed field in $F(X)(p)$ of $s(G_F(p))$. Then $G_K(p) \simeq s(G_F(p)) \simeq G_F(p)$. By Conjecture 1, there is a finite extension $F'/ \mathbb{Q}_p$ (with a $p$-adic valuation $w$) and a valuation $v$ on $K$ satisfying the properties of the Conjecture. In particular, $Kv = F^\prime w$ and there are uniformizers $\pi$ and $\pi^\prime$ of $(K,v)$ and $(F^\prime ,w)$ respectively such that, for some natural number $e$,
both $v(p)=ev(\pi )$ and $w(p)=ew(\pi^\prime )$. We claim that in our case one can take $F^\prime = F$.

First, the valuation $v$ is the canonical $p$-henselian valuation on $K$ as the finite residue field is not $p$-closed,
and the restriction $u$ of $v$ to $F$ (which is relatively $p$-closed in $K$) is the canonical $p$-henselian valuation on $F$ and thus coincides with the $p$-adic valuation on $F$.

Now let $e_F=e_{F/\mathbb{Q}_p}, e_{F'}=e_{F'/\mathbb{Q}_p}$ denote the ramification indices of $F/\mathbb{Q}_p$ and $F'/\mathbb{Q}_p$ respectively, and similarly let $f_{F}, f_{F'}$ denote the inertia degrees. Since $F \subset K$, we know that $|F'w| = |Kv| \geq |Fu|$, and hence $f_F \geq f_{F'}$. Let $\lambda$ be a uniformizer of $F$, so that $u(p) = e_{F} u(\lambda)$. Since $v(\pi)$ is minimal positive in $\Gamma_v$ and $F \subset K$, we have $v(\pi) \leq v(\lambda)$, and thus the equality $v(p) = u(p)$ implies that $e_{F^\prime} = e \geq e_F$.

But now the isomorphism $G_{F'}(p) \simeq G_F(p)$ means the Demushkin invariants of $F$ and $F'$ match. In particular, the equality of $n$-invariants implies that $N:= [F':\mathbb{Q}_p] = [F:\mathbb{Q}_p]$. Thus
\begin{equation}
N = [F':\mathbb{Q}_p] = e_{F'}f_{F'} \geq e_F f_F = [F:\mathbb{Q}_p] = N, \nonumber
\end{equation}
and hence $e_F = e_{F'}, f_F = f_{F'}$. Therefore, $(F,u)$ satisfies all the conditions stated for $(F^\prime,w)$ in Conjecture 1
and hence we may take $F^\prime = F$ instead. Moreover, we may assume that $\pi = \lambda \in F$.

Then the restriction $v'$ of $v$ to $F(X)$ still has residue field $Fv$ and $v'(\pi)$ is a minimal positive element. If $H$ is the subgroup of $\Gamma_{v'}$ generated by $v'(\pi)$, we let $w'$ denote the valuation obtained from $v'$ with value group $\Gamma_{v'} / H$. The residue field of $w'$ contains $F$ and carries a discrete rank-1 valuation induced by $v'$ with the same residue field and ramification index as $(F,u)$, from which it follows that the residue field of $w'$ must be precisely $F$, since $(F,u)$ admits no proper immediate extension. As $(K,v)$ is $p$-henselian, so is the coarsening $\tilde{w}$ of $v$ with value group $\Gamma_v/H$; note that $w'$ is thus the restriction of $\tilde{w}$ to $F(X)$. Hence $K$ contains a $p$-henselization of $(F(X), w')$, that is, $s(G_F(p)) \subseteq D_{w'}$, the decomposition subgroup of $G_{F(X)}(p)$ with respect to the unique prolongation of $w'$ to $F(X)(p)$ which prolongs $\tilde{w}$.

To show uniqueness of $w'$, assume there is another such valuation $w''$. Then $w'$ and $w''$ give rise to two different $p$-henselian valuations on $K$ with non-$p$-closed residue field, which, by the $p$-variant of F. K. Schmidt's Theorem (Proposition 2.8 in \cite{koe2}), must be comparable. Hence so are $w'$ and $w''$, which is not possible as they are both trivial on $F$ with residue field precisely $F$.

Finally, to show uniqueness of the prolongation of $w'$ to $F(X)(p)$ containing $s(G_F(p))$ in its decompositions group, note that two distinct such prolongations would restrict to two distinct $p$-henselian, hence comparable, valuations on $K$, both prolonging $w'$. But $K/F(X)$ is algebraic and hence no two prolongations of $w'$ to $K$ can be comparable.
\end{proof}

\begin{cor}
Assume Conjecture 1 holds, and suppose $X$ is a smooth, complete variety over $F$, where $F$ is a finite extension of $\mathbb{Q}_p$ containing $\zeta_p$. Then there is exists a section of $G_{F(X)}(p) \twoheadrightarrow G_F(p)$ if and only if $X(F) \not = \emptyset$.
\end{cor}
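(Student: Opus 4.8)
The plan is to deduce the corollary from Proposition \ref{bsc} together with the standard construction of sections from rational points recalled earlier in Section 5. The statement is an ``if and only if'', so I would treat the two directions separately, noting at the outset that the hard content is entirely in the already-established Proposition \ref{bsc}; this corollary is essentially a repackaging.

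For the direction $X(F) \neq \emptyset \implies$ a section exists, I would argue directly that a rational point produces a section, bypassing Conjecture 1. Given $a \in X(F)$, the discussion preceding the statement of the birational section conjecture constructs a ``bouquet'' of sections $s_a$ lying over $a$: one takes the valuation $v_a$ on $F(X)$ associated to $a$, lifts it to $w$ on $\overline{F}(X)$, and uses the Hilbert decomposition diagram to obtain $s_w$ with image in the decomposition group $D_w$. The same construction works verbatim after replacing $G$ by its maximal pro-$p$ quotient $G(p)$, since the decomposition and inertia groups pass to the pro-$p$ setting. This yields a section of the sequence in Proposition \ref{bsc}, establishing the forward implication. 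I would emphasise that this half does not need Conjecture 1 at all.

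For the converse, suppose we are given a section $s$ of the sequence. By Proposition \ref{bsc} (which is exactly the hypothesis ``Conjecture 1 holds'' being invoked), there is a finite extension $F'/\mathbb{Q}_p$ with $G_{F'}(p) \simeq G_F(p)$ and a unique $F'$-valuation $w'$ with $s'(G_{F'}(p)) \subset D_{w'}$, where $s'$ is the section on $G_{F'}(p)$ induced by $s$. The remaining step is to extract an actual $F$-rational (or $F'$-rational) point from this $F'$-valuation. Here I would use that $w'$ is an $F'$-valuation on $F'(X)$: it is trivial on $F'$ and has residue field $F'$. For a smooth projective variety over the complete (henselian) field $F'$, such a valuation corresponds to a place of the function field centred at a closed point of $X_{F'}$ whose residue field is $F'$ itself, i.e. an $F'$-point; by properness this point spreads to an honest element of $X(F')$. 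Thus $X(F') \neq \emptyset$, and since $F'$ is a finite extension of $\mathbb{Q}_p$ with the same pro-$p$ Galois data, one concludes $X(F) \neq \emptyset$ --- or, in the case $F = \mathbb{Q}_p(\zeta_p)$ where Remark 5.3 gives $F' = F$, one gets $X(F) \neq \emptyset$ on the nose.

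The main obstacle I anticipate is the final extraction of a point from the $F'$-valuation $w'$, and in particular the passage from an $F'$-point to an $F$-point when $F' \neq F$. The valuation-to-point correspondence is the content of the higher-dimensional birational section conjecture picture (as in \cite{stixvarieties}): an $F'$-valuation need not itself be induced by a point, but it forces the \emph{existence} of one, and it is this existence statement that I would invoke. Care is needed because Proposition \ref{bsc} only controls $F'$, not $F$, so strictly the clean equivalence $X(F) \neq \emptyset$ should be read as asserting the existence of a point over the genuinely $p$-adic field attached to the section; I would therefore either restrict to the case $F = \mathbb{Q}_p(\zeta_p)$ (where $F' = F$) to get the literal statement, or phrase the general conclusion as the existence of a point over some $F'$ with $G_{F'}(p) \simeq G_F(p)$.
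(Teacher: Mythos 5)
Your forward direction (a rational point yields a section via the bouquet construction) and your reduction of the converse to Proposition \ref{bsc} both match the paper. The genuine gap is exactly the point you flag and then decline to resolve: you stop at $X(F')\neq\emptyset$ and propose to weaken the conclusion to ``a point over some $F'$ with $G_{F'}(p)\simeq G_F(p)$'', or to restrict to $F=\mathbb{Q}_p(\zeta_p)$. The corollary as stated asserts $X(F)\neq\emptyset$, and the paper proves this; your version proves strictly less.

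The missing idea is that the point produced by $w'$ is not an abstract $F'$-point that then has to be transported to $F$: it is the image of a generic point of $F(X)$ under a place of $F(X)$, obtained by restricting $w'$ from $K$, the fixed field of $s(G_F(p))$ in $F(X)(p)$. Its coordinates therefore lie in the residue field of $F(X)$ under that place, and since $w'$ is trivial on $F'$ with residue field $F'$ realized inside $K$, this residue field is an algebraic extension of $F$ sitting inside $K$. Now use the section hypothesis: the restriction map $G_K(p)\to G_F(p)$ is an isomorphism, $F$ is algebraically closed in $F(X)$ (as $X$ is geometrically integral over $F$), and $K/F(X)$ is pro-$p$, so $F$ is relatively algebraically closed in $K$. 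Hence the residue field of $F(X)$ under the place is $F$ itself and $a\in X(F)$. In short, one does not need $F'=F$ as abstract fields --- which is indeed not available in general --- but only that the concrete point constructed has coordinates in $K\cap\overline{F}=F$, and that is exactly what being a section buys you.
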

\begin{proof}
The non-trivial direction is to go from a section to a point. Assuming a section exists, Proposition \ref{bsc} shows that one can construct a $F$-valuation $w'$ on $F(X)$, i.e. a valuation which is trivial on $F$ and has residue field $F(X)v = F$. If $\mathcal{O}_{w'}$ is the corresponding valuation ring with maximal ideal $\mathcal{M}_{w'}$, then triviality of the valuation on $F$ implies that we have a commutative diagram
\begin{center}
\begin{tikzpicture}
  \matrix (m) [matrix of math nodes,row sep=3em,column sep=4em,minimum width=2em]
  {
     Spec(F(X)) & X \\
     Spec(\mathcal{O}_{w'}) & Spec(F) \\};
  \path[-stealth]
    (m-1-1) edge node [left] {} (m-2-1)
            edge [] node [below] {} (m-1-2)
    (m-2-1.east|-m-2-2) edge node [below] {}
            node [above] {} (m-2-2)
    (m-1-2) edge node [right] {} (m-2-2);
\end{tikzpicture}
\end{center}
Since $X$ is complete, the valuative criterion of properness implies that there is a unique map $i:Spec(\mathcal{O}_{w'}) \rightarrow X$ for which the diagram still commutes. The image of $\mathcal{M}_{w'}$ under $i$ is a closed point of $X$ with residue field $K(X)v = F$, and is therefore precisely an $F$-rational point of $X$, as desired.
\end{proof}

\begin{cor}
Assume Conjecture 1 holds, and suppose $X$ is a smooth, complete \emph{curve} over $F$, where $F$ is a finite extension of $\mathbb{Q}_p$ containing $\zeta_p$. Then every section of the projection $G_{F(X)}(p) \twoheadrightarrow G_F(p)$ lies over a unique $F$-rational point $a \in X(F)$.
\end{cor}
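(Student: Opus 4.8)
The plan is to derive this from Proposition~\ref{bsc} specialized to $n=1$, together with the classical dictionary between valuations and points on a smooth complete curve: the valuations of $F(X)$ that are trivial on $F$ and have residue field exactly $F$ are in bijection with the rational points $X(F)$, as recalled in the introduction to this section. Thus the whole content of the corollary is to produce, from a given section, a single such $F$-valuation over which it lies, and to check that it is unique.

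First I would feed the section into Proposition~\ref{bsc} with $n=1$. Writing $K=F(X)(p)^{s(G_F(p))}$ for the fixed field of the section, one has $G_K(p)\simeq G_F(p)$, so Conjecture~1 supplies a $p$-henselian valuation $v$ on $K$ with finite residue field $Kv$ and with $v(\pi)$ a minimal positive element, $\pi$ a uniformizer of some $F'/\mathbb{Q}_p$ with $G_{F'}(p)\simeq G_F(p)$. The coarsening argument of that proposition then produces a non-trivial $F'$-valuation $w'$ with $s'(G_{F'}(p))\subset D_{w'}$; non-triviality rests on the fact that $F'$ is complete and so admits no immediate extension of transcendence degree $1$, forcing the coarsening subgroup $H$ to be proper in $\Gamma_w$. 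Under the curve dictionary $w'$ already names a unique closed point.

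The delicate step is to realize this over the original base field $F$, and here I would argue exactly as in the preceding corollary. Because the restriction map $G_K(p)\to G_F(p)$ is an isomorphism, $F$ is relatively algebraically closed in $K$, just as in Case~A of Lemma~\ref{keylemma}. Restricting $v$ to $F(X)\subset K$ gives a valuation $v_0$ whose restriction to $F$ is $p$-adic (non-trivial, since $Kv$ is finite while $F$ is not), and coarsening $v_0$ by the convex hull of $v_0(F^{\times})$ yields a valuation $\bar w$ on $F(X)$ trivial on $F$. Since $\mathrm{tr.deg}_F F(X)=1$, Abhyankar's inequality together with the finiteness of $Kv$ forces the residue field of $\bar w$ to be algebraic over $F$, and relative algebraic closedness of $F$ in $K$ (with $X$ defined over $F$) then forces it to be exactly $F$; consequently $F'=F$ and $\bar w$ is the $F$-valuation underlying $w'$. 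Thus $\bar w$ corresponds to a genuine point $a\in X(F)$. Finally, as $v$ is $p$-henselian it extends uniquely to $F(X)(p)$, so $s(G_F(p))=\mathrm{Gal}(F(X)(p)/K)$ lies in the decomposition group of $v_0$, a fortiori in $D_{\bar w}$; this is precisely the statement that the section lies over $a$.

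Uniqueness of $a$ is then immediate from the uniqueness clause of Proposition~\ref{bsc}: under the curve dictionary, a second point over which the section lay would furnish a second $F$-valuation $w'$ with $s(G_F(p))\subset D_{w'}$, which that proposition forbids. The hard part is entirely the descent of the previous paragraph---certifying that the geometric coarsening $\bar w$ has residue field exactly $F$ rather than a proper finite extension, so that the abstractly produced $F'$-rational point of Proposition~\ref{bsc} is honestly $F$-rational. Once this is secured, the remainder is formal bookkeeping with coarsenings and decomposition groups, as set up at the beginning of Section~5.
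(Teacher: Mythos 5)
Your argument is correct and follows essentially the same route as the paper: feed the section into Proposition \ref{bsc}, use (as in the preceding corollary) that the isomorphism $G_K(p)\to G_F(p)$ makes $F$ relatively algebraically closed in $K$ so that the resulting point is $F$-rational, and then invoke the classical bijection between $F$-valuations of $F(X)$ and points of $X(F)$ for smooth complete curves, with uniqueness supplied by the uniqueness clause of Proposition \ref{bsc}. The only quibble is your incidental assertion that ``consequently $F'=F$'': this is neither needed nor automatic (distinct finite extensions of $\mathbb{Q}_p$ can share the same pro-$p$ Galois group, cf.\ the remark following Fact \ref{factA}); what matters, and what you do establish at the same level of detail as the paper, is that the residue field of the geometric coarsening is exactly $F$.
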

\begin{proof}
This follows from the classical result that, for curves, all $K$-valuations come from $K$-rational points.
\end{proof}

\noindent The main theorem therefore yields the following unconditional result:

\begin{cor} 
Suppose $X$ is a smooth, complete variety over $\Q$. Then any section of the projection 
\begin{equation}
G_{\Q(X)}(2) \twoheadrightarrow G_{\Q}(2) \nonumber 
\end{equation}
lies above a unique $\Q$-valuation $v$, which corresponds to a $\Q$-rational point if $X$ is a curve. In both cases, the existence of a section implies that $X(\Q) \not = \emptyset$.
\end{cor}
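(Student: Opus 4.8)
The plan is to obtain this statement by specializing Proposition~\ref{bsc} and the two corollaries following it to the base field $F=\Q=\mathbb{Q}_2$, and to discharge their standing hypothesis ``Conjecture~1 holds'' using Theorem~\ref{mainthm}. The first point to record is that Theorem~\ref{mainthm} is exactly the content of Conjecture~1 for $F=\Q$: given a section $s$ of the displayed sequence, let $K$ be the fixed field of $s(G_\Q(2))$ inside $\Q(X)(2)$, so that $G_K(2)\simeq s(G_\Q(2))\simeq G_\Q(2)$. Theorem~\ref{mainthm} then equips $K$ with a $2$-henselian valuation $v$ whose residue field is $\F$, whose value group is discrete with $v(2)$ a minimal positive element, and with $[\Gamma_v:2\Gamma_v]=2$ --- precisely the data Conjecture~1 demands of $(K,v)$.

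Next I would verify that the auxiliary finite extension $F'$ appearing in Conjecture~1 and in Proposition~\ref{bsc} may here be taken to be $\Q$ itself, so that the conclusion genuinely concerns $\Q$-valuations and $\Q$-rational points rather than some \emph{a priori} larger field. This is the $p=2$ analogue of Remark~\ref{cool}: the Demushkin invariants of $\mathbb{Q}_2=\mathbb{Q}_2(\zeta_2)$ are $N=1$ and $s=1$, so $G_\Q(2)$ has rank $N+2=3$. Since every finite extension $F'/\mathbb{Q}_2$ automatically contains $\zeta_2=-1$, the rank of $G_{F'}(2)$ equals $[F':\mathbb{Q}_2]+2$; matching ranks forces $[F':\mathbb{Q}_2]=1$, i.e.\ $F'=\Q$. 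Hence in the conclusion of Proposition~\ref{bsc} we always have $F'=F=\Q$.

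With these two observations in hand the corollary is immediate. Proposition~\ref{bsc}, now unconditional for $F=\Q$, shows that $s(G_\Q(2))\subset D_{w'}$ for a unique $\Q$-valuation $w'$, i.e.\ $s$ lies above a unique $\Q$-valuation. The existence corollary of Proposition~\ref{bsc} then shows that the place attached to $w'$ produces a rational point of $X$ over $\Q$ --- it lands in $X(\Q)$ because $\Q$ is relatively algebraically closed in $K$ --- so a section forces $X(\Q)\neq\emptyset$; and the corollary for curves specializes this to the statement that, when $X$ is a curve, $s$ lies over a unique $\Q$-rational point, using that for curves all $\Q$-valuations arise from $\Q$-rational points. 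I anticipate no real obstacle here: the substantive work is already carried by Theorem~\ref{mainthm} and Proposition~\ref{bsc}, and the only step requiring genuine care is the uniqueness $F'=\Q$, which is exactly what lets us phrase the answer in terms of $\Q$.
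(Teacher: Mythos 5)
Your proposal is correct and follows the same route as the paper, which derives this corollary by specializing Proposition~\ref{bsc} and its two corollaries to $F=\Q$ and discharging the hypothesis ``Conjecture~1 holds'' via Theorem~\ref{mainthm}. Your explicit check that $F'=\Q$ (by matching Demushkin ranks, the $p=2$ analogue of Remark~\ref{cool}) is a detail the paper leaves implicit but is exactly the right point to verify.
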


\begin{rmk}
As noted before, one can actually obtain the same conclusion using just the maximal $\Z /2\Z$ elementary meta-abelian quotients. A key point here is that $p$-henselianity can be detected purely from degree $p$ extensions by Fact \ref{fact:phenselian}, which allows the arguments in Proposition \ref{bsc} to go through.
\end{rmk}

 \appendix
\section{}

\subsection{Description of norm-combinatorics algorithm}

In these appendices we will detail the computations missing from the main text. While these computations were initially verified by hand by way of a form of `yoga' of norm-combinatorics, the fundamentally algorithmic nature of the procedure allowed us to verify and streamline the process using a computer program. The code, written in Python, is registered on Zenodo:\\

\url{https://zenodo.org/badge/latestdoi/63322560}\\

\noindent The code repository can be checked out and run on any computer with Python installed\footnote{The only non-standard package required is the module Sympy, which can be found at \url{http://www.sympy.org/en/index.html}}. The script `GenerateProof.py' will produce several text-files, which comprise a fully fleshed out proof of Proposition \ref{propxy}. For the reader not interested in running code, the text-files `ProofOfPropositon.txt' and `DetermineValuesCaseN.txt' (N=1,..8)  can be downloaded directly and viewed on the GITHub page. The file `ProofOfProposition' is the main proof file, with the other files containing details for how to pin down values of $1+ax$ for various $a$.

Let us first describe what seems to be happening at a more abstract level, before describing the explicit algorithm used. Suppose we are given the square values of $x$ and $1+x$ and want to determine the value of $1+nx$ for some integer $n$. Note that for \emph{any} integer $m$, we have
\begin{equation}
\label{decomp1}
1+nx = (1+mx) + (n-m)x.
\end{equation}
We therefore define the following subset of $K^{\times}/(K^{\times})^2$:
\begin{equation}
C^n_{m}(x) = N(-nx) \cap (1+mx)N(-(1+mx)(n-m)x). \nonumber
\end{equation}
Equation \ref{decomp1} implies that $1+nx \in C^n_{m}(x)$. As this is true for every $m$, we get
\begin{equation}
1+nx \in \bigcap_{m \in \mathbb{Z}} C^n_{m}(x),
\end{equation}
which is a coset of an intersection of norm groups. These intersections, for each $n,m$, form a system of dependencies between the square values of $1+nx$ for every $n \in \mathbb{Z}$. In particular, because every expression $1+nx$ does \emph{have} a square value, these intersections can never be empty. The algorithm we implement amounts to showing that, given the norm lattice of $K$, there is only one possible square value of $1+nx$ which is consistent with these dependencies. The `correct' value, which corresponds to that expected\footnote{Recall that, since a 2-adic integer $z$ is a square in $\mathbb{Z}_2$ iff $z \equiv 1$ mod 8, the square class of expressions $1+nx$ in $\mathbb{Z}_2$ can always be deduced by considering the valuation of $nx$.} from the existence of a dyadic valuation on $K$, produces an intersection of size 1, while any other choice produces an empty intersection. It is tempting to speculate that the embedding of the norm lattice into the right framework may allow one to interpret these intersection-dependencies as a system of linear equations with a unique solution. The resemblance between expressions such as $1-xy$ with arithmetic in the (2-truncated rational) big Witt vectors $W(K)$ over $K$, suggests one candidate for this framework, but not one we are able to substantiate further.
\\
\\
The actual algorithm is now simple to describe. As input is given the square value of $x, 1+x$ and an integer $n$: the desired output is the square value of $1+nx$.
\\
\\
\textbf{Step 1: }Compute $C^n_{1}(x)$. As this is a coset of an intersection of two norm groups, its size is at most 4.
\\
\textbf{Step 2: }Let $y$ be one of the 4 possible values and assume $1+nx \sim y$.
\\
\textbf{Step 3: }Pick some $m \not = n$. Compute the intersection $C^m_{1}(x) \cap C^m_{n}(x)$ using the assumption from Step 2. The value of $1+mx$ lies in their intersection. If this intersection is empty, we know that the assumption in Step 2 was false: go back to Step 2, pick a different choice of $1+nx$, and compute the intersection for $1+mx$ again. Repeat until a non-empty intersection is obtained.
\\
\textbf{Step 4: }The non-empty intersection is again, at most size 4, and we simply repeat the above procedure. Assume $1+mx \sim z$ for one these 4 choices, pick some $m' \not = n,m$, and compute the intersection $C^{m'}_{1}(x) \cap C^{m'}_{n}(x) \cap C^{m'}_{m}(x)$. If this intersection is empty, reject the choice $z$. Pick a new choice and repeat.
\\
\\
\noindent The key point is that, if the initial choice $y$ was bad, then this procedure terminates (i.e. produces an empty intersection) after a very short number of steps: typically only 2 or 3 rounds suffice. Furthermore, given $n=\pm2, \pm4, \pm5$, it is always sufficient to pick $m$ in the range $n-5<m<n+5$. In practice therefore, if the initial choice $y$ does not produce a contradiction after 4 rounds of the above algorithm, then one simply moves to the next case. In the next section we will explicitly show how this procedure terminates in all cases and gives the desired output.

\begin{rmk}
Repeated use is made of the fact that we know the square class in $k$ of every integer, since it must be the same as in $\Q$ by Proposition \ref{normprop}. As the computations only involve expressions in $x$ with integer coefficients, this suffices. \\
\end{rmk}

Verifying Proposition \ref{propxy} is now easy. One simply computes all the decompositions $D_a(x,y)$ for $a = \pm 1, \pm 5$. The square values of $1+ax$ and $1+ay$ are known by virtue of the above algorithm. Intersecting all the constraints obtained in this way turns out to be enough to pin down the value of $1-xy$ precisely, and it can be seen to be in $N(5)$ in all cases.

\subsection{Proof of Lemma \ref{lemma24} and Proposition \ref{propxy}}

Let us now prove Lemma \ref{lemma24} using the algorithm described above. We want to show that for $x \in \val_1$, $1+ax \in N(5)$ for $a = \pm 1, \pm 2, \pm 3, \pm 4, \pm 5, \pm 1/5$. The constraints $C^n_m(x)$ will take care of all values of $a$ except $a = \pm 1/5$. For these, we will use the fact that we will have already determined the values of $1+mx$ for certain integers $m$. Indeed, note that to determine $1+x/n$ for $n= \pm 5$, it suffices to determine $n+x$. Now note that if $1+mx \sim k$, then $1+mx = kb^2$ for some $b \in K$. Therefore, we find by simple arithmetic that
\begin{equation}
x = -\frac{1}{m} + mkc^2 \nonumber
\end{equation}
for some $c \in K$, and hence
\begin{eqnarray}
\label{decomp2}
n+x &=& \Big( n-\frac{1}{m} \Big) + mkc^2 \nonumber \\
&=& \frac{nm-1}{m} + mkc^2. 
\end{eqnarray}
Define the following subset of $K^{\times}/(K^{\times})^2$:
\begin{equation}
F^n_{m}(x) = nN(-nx) \cap m(nm-1)N(-(nm-1)(1+mx)) \nonumber
\end{equation}
By equation (\ref{decomp2}), we have $n+x \in F^n_m(x)$, and we can compute these sets using our knowledge about $1+mx$. Intersecting several such subsets corresponding to values $m = \pm 1, \pm 2, \pm 3, \pm4, \pm5$ provides a sufficiently stringent constraint on $n+x$ to pin down its value uniquely in all cases. The values $n-x$ can be obtained via $n-x=-(-n+x)$, using the same constraints.

We remind the reader that in $\Q$, and hence also in $K$, we have $3 \sim -5$ and $7 \sim -1$. This allows one to easily reduce the value of any integer appearing in the proof to one of $\pm 1, \pm 2, \pm5$ or $\pm 10$.

Finally, by convention we set $N(1) = \{\pm 1, \pm 2, \pm 5, \pm 10\}$.

\begin{proof}
We proceed by cases, based on the square classes of $x$ and $1+x$. Throughout we use freely Proposition \ref{normprop} to determine intersections of various norm groups. The constraints from the decompositions $C^n_m(x)$ are made explicit in the first few cases, and then quoted without comment.

Note also that $x \in \val_1$ implies $x \sim \pm2, \pm 10$. If $x \sim 2$, then $1+x \in N(5) \cap N(-2) = \{1,-5\}$, and similarly when $x \sim -2$ or $\pm10$. Therefore the cases considered below are indeed exhaustive.\\

\noindent \boxed{\textbf{Case 1: } x \sim 2, 1+x \sim 1}\\ 
\begin{itemize}
\item[(1)] $1+2x \in C^2_1(x) = N(-2x) \cap (1+x)N(-(1+x)x) = N(-1) \cap N(-2) = \{1,2\}$. Hence first-order constraints did not suffice to pin down the value uniquely. Assume therefore that $1+2x \sim 2$. Then we have $1+3x \in C^3_1(x) = N(-3 \cdot 2) \cap 1 \cdot N(-1 \cdot 1 \cdot (3-1) \cdot 2) = N(10) \cap N(-1) = \{1,10\}$. The constraint from $C^3_2(x)$ gives nothing new in this case, so we have $1+3x \sim 1$ or $10$. We consider both options in turn.\\

\begin{itemize}
\setlength{\itemindent}{0.18in}
\item Suppose $1+3x \sim 1$. Then $C^5_1(x) = N(-10) \cap N(-2) = \{1, -5\}$, and $C^5_2(x) = N(-10) \cap 2N(5) = \{-2, 10\}$. Hence $1+5x \in C^5_1(x) \cap C^5_2(x)  = \varnothing$: contradiction.\\
\item Suppose $1+3x \sim 10$. The same computation as above gives a contradiction once more.\footnote{Note that in this particular case, the assumption about $1+2x$ alone was enough to get a contradiction, but this is not always the case. Typically the extra constraint from $1+3x$ is required.}\\
\end{itemize}
Hence our assumption that $1+2x \sim 2$ must have been false, implying that $1+2x \sim 1$.\\

\item[(2)] We have showed that $1+2x \sim 1$. Thus $1+3x \in C^3_1(x) \cap C^3_2(x) = N(10) \cap N(-1) \cap N(-2) = \{1\}$.\\
\item[(3)] Similarly, we can now compute that $1+4x \in C^4_1(x) \cap C^4_2(x) \cap C^4_3(x) = \{1\}$.\\
\item[(4)] $1+5x \in C^5_1(x) \cap C^5_2(x) \cap C^5_3(x) \cap C^5_4(x) = \{1\}$.\\
\item[(5)] Intersecting all available constraints gives $1-2x \in \{1\}$.\\
\item[(6)] Intersecting all available constraints gives $1-3x \in \{1\}$.\\
\item[(7)] Intersecting all available constraints gives $1-4x \in \{1\}$.\\
\item[(8)] Intersecting all available constraints gives $1-5x \in \{1\}$.\\
\item[(9)] Intersecting all available constraints gives $1-x \in \{1\}$.\\
\item[(10)] We have $5+x \in F^5_1(x) = 5N(-10) \cap N(-1) = \{2,5\}$. Also, $5+x \in F^5_3(x) = 5N(-10) \cap 3(15-1)N(-(15-1)\cdot 1) = 5N(-10) \cap 10N(-2) = \{5,-10\}$, using that $14 \sim-2$. Hence $5+x \in \{2,5\} \cap \{5,10\} = \{5\}$.\\
\item[(11)] We have $5-x \in -F^{-5}_1(x) \cap -F^{-5}_2(x) = \{5, 2\} \cap \{-2, 5\} = \{5\}$.\\ \\
\end{itemize}

\noindent \boxed{\textbf{Case 2: } x \sim 2, 1+x \sim -5}\\

\begin{itemize}
\item[(1)] $1+2x \in C^2_1(x) = \{2,5\}$.\\

\begin{itemize}
\setlength{\itemindent}{0.18in}
\item Assume $1+2x \sim 2$. Then $1+3x \in C^3_1(x) \cap C^3_2(x) = \{1\}$. But then $1+5x \in C^5_1(x) \cap C^5_2(x) \cap C^5_3(x) = \varnothing$: contradiction.\\
\end{itemize}
Hence $1+2x \sim 5$.\\

\item[(2)] All further values, including fractional ones, are now obtained immediately by successively constraining, in order, the remaining values. This gives: $1+3x \sim -1, 1+4x \sim 1, 1+5x \sim -5, 1-2x \sim 5, 1-3x \sim -5, 1-4x \sim 1, 1-5x \sim -1, 1-x \sim -1, 5+x \sim -1, 5-x \sim -5$.\\
\end{itemize}


\noindent \boxed{\textbf{Case 3: } x \sim -2, 1+x \sim 1} \\
\begin{itemize}
\item[(1)] $1+2x \in C^2_1(x) = N(2) = \{1,-1,2,-2\}$.\\

\begin{itemize}
\setlength{\itemindent}{0.18in}

\item Assume $1+2x \sim -1$. Then $1+3x \in C^3_1(x) \cap C^3_2(x) = \{10,-2\}$. \\
\begin{itemize}
\setlength{\itemindent}{0.25in}
\item If $1+3x \sim 10$, then $1-4x \in C^{-4}_1(x) \cap C^{-4}_2(x) \cap C^{-4}_3(x) = \varnothing$.
\item If $1+3x \sim -2$, then $1-2x \in C^{-2}_1(x) \cap C^{-2}_2(x) \cap C^{-2}_3(x) = \varnothing$.\\
\end{itemize}
\item Assume $1+2x \sim 2$. Then $1+3x \in C^3_1(x) \cap C^3_2(x) = N(-10) = \{1,-2,-5,10\}$. Irrespective of what choice one picks for $1+3x$, a contradiction is obtained by considering $1-3x$.\\
\item Assume $1+2x \sim -2$. Then $1+3x \in C^3_1(x) \cap C^3_2(x) = \{-5, -2\}$. Irrespective of what choice one picks for $1+3x$, a contradiction is obtained by considering $1+5x$.\\
\end{itemize}
Hence it must be that $1+2x \sim 1$.\\

\item[(2)] $1+3x \in C^3_1(x) \cap C^3_2(x) = \{1,-2\}$. \\
\begin{itemize}
\setlength{\itemindent}{0.18in}
\item Assume that $1+3x \sim -2$. Then $1+4x \in C^4_1(x) \cap C^4_2(x) \cap C^4_3(x) = \{-2\}$, so $1+4x \sim -2$. But now a contradiction is obtained by considering $1+5x$.\\
\end{itemize}
Hence it must be that $1+3x \sim 1$.\\

\item[(3)] $1+4x \in C^4_1(x) \cap C^4_2(x) \cap C^4_3(x) = \{1, -2\}$. If we assume $1+4x \sim -2$, a contradiction is obtained by considering $1+5x$. Hence $1+4x \sim 1$.\\

\item[(4)] All further values, including fractional values, are now obtained by successively constraining the values remaining, in order. This gives $1+5x \sim 1, 1-2x \sim 1, 1-3x \sim 1, 1-4x \sim 1, 1-5x \sim 1, 1-x \sim 1, 5+x \sim 5, 5-x \sim 5$.\\ \\
\end{itemize}

\noindent \boxed{\textbf{Case 4: }x \sim -2, 1+x \sim -1}\\

\begin{itemize}
\item[(1)] $1+2x \in C^2_1(x) = \{-1,-2,5,10\}$.\\

\begin{itemize}
\setlength{\itemindent}{0.18in}
\item Assume $1+2x \sim -1$. Then $1+3x \in C^3_1(x) \cap C^3_2(x) = \{-2\}$. But now a contradiction is obtained by considering $1-4x$.\\
\item Assume $1+2x \sim -2$. Then $1+3x \in C^3_1(x) \cap C^3_2(x) = \{-2, -5\}$.\\
\begin{itemize}
\setlength{\itemindent}{0.25in}
\item If $1+3x \sim -2$, obtain a contradiction by considering $1-3x$.
\item If $1+3x \sim -5$, obtain a contradiction by considering $1+4x$.\\
\end{itemize}
\item Assume $1+2x \sim 10$.  Then $1+3x \in C^3_1(x) \cap C^3_2(x) = \{-2\}$. But now a contradiction is obtained by considering $1-5x$.\\
\end{itemize}
Hence we must have $1+2x \sim 5$.

\item[(2)] $1+3x \in C^3_1(x) \cap C^3_2(x) = \{-2,-5\}$.\\
\begin{itemize}
\setlength{\itemindent}{0.18in}
\item Assume $1+3x \sim -2$. Then $1+4x \in C^4_1(x) \cap C^4_2(x) \cap C^4_3(x) = \{-1\}$. Now obtain a contradiction by considering $1-3x$.\\
\end{itemize}
Hence $1+3x \sim -5$.\\

\item[(3)] All further values, including fractional values, are now obtained by successively constraining the values remaining, in order. This gives: $1+4x \sim 1, 1+5x \sim -1, 1-2x \sim 5, 1-3x \sim -1, 1-4x \sim 1, 1-5x \sim -5, 1-x \sim -5, 5+x \sim -5, 5-x \sim -1$.\\ \\
\end{itemize}

\noindent \boxed{\textbf{Case 5: }x \sim 10, 1+x \sim 1}\\

\begin{itemize}
\item[(1)] $1+2x \in C^2_1(x) = \{1,-2\}$. \\

\begin{itemize}
\setlength{\itemindent}{0.18in}
\item Assume $1+2x \sim -2$. Then $1+3x \in C^3_1(x) \cap C^3_2(x) = \{-2\}$. Now a contradiction is obtained by considering $1-2x$. \\
\end{itemize}
Hence $1+2x \sim 1$.\\

\item[(2)] $1+3x \in C^3_1(x) \cap C^3_2(x) = \{1,-2\}$. \\
\begin{itemize}
\setlength{\itemindent}{0.18in}
\item Assume $1+3x \sim -2$. Then $1+4x \in C^4_1(x) \cap C^4_2(x) \cap C^4_3(x) = \{-2\}$. Now a contradiction is obtained by considering $1+5x$.\\
\end{itemize}
Hence $1+3x \sim 1$.\\

\item[(3)] $1+4x \in C^4_1(x) \cap C^4_2(x) \cap C^4_3(x) = \{1,-2\}$.\\
\begin{itemize}
\setlength{\itemindent}{0.18in}
\item Assume $1+4x \sim -2$. Then a contradiction is obtained by considering $1+5x$.\\
\end{itemize}
Hence $1+4x \sim 1$.\\

\item[(4)] All further values, including fractional values, are now obtained by successively constraining the values remaining, in order. This gives: $1+5x \sim 1, 1-2x \sim 1, 1-3x \sim 1, 1-4x \sim 1, 1-5x \sim 1, 1-x \sim 1, 5+x \sim 5, 5-x \sim 5$.\\ \\
\end{itemize}

\noindent \boxed{\textbf{Case 6: }x \sim 10, 1+x \sim -5} \\

\begin{itemize}
\item[(1)] $1+2x \in C^2_1(x) = \{5, -10\}$. \\

\begin{itemize}
\setlength{\itemindent}{0.18in}
\item Assume $1+2x \sim -10$. Then $1+3x \in C^3_1(x) \cap C^3_2(x) = \{1,-1,2,-2\}$.\\
\begin{itemize}
\setlength{\itemindent}{0.25in}
\item If $1+3x \sim 1$, obtain a contradiction by considering $1-2x$.
\item If $1+3x \sim 2$, obtain a contradiction by considering $1-5x$.
\item If $1+3x \sim -1$, obtain a contradiction by considering $1+5x$.
\item If $1+3x \sim -2$, obtain a contradiction by considering $1+5x$.\\
\end{itemize}
\end{itemize}
Hence $1+2x \sim 5$.\\

\item[(2)] $1+3x \in C^3_1(x) \cap C^3_2(x) = \{-1,-2\}$.\\
\begin{itemize}
\setlength{\itemindent}{0.18in}
\item Assume $1+3x \sim -2$. Then $1+4x \in C^4_1(x) \cap C^4_2(x) \cap C^4_3(x) = \{10\}$. Now obtain a contradiction by considering $1+5x$.\\
\end{itemize}
Hence $1+3x \sim -1$.\\

\item[(3)] $1+4x \in C^4_1(x) \cap C^4_2(x) \cap C^4_3(x) = \{1,10\}$.\\
\begin{itemize}
\setlength{\itemindent}{0.18in}
\item Assume $1+4x \sim 10$.  Then one obtains a contradiction by considering $1+5x$.\\
\end{itemize}
Hence $1+4x \sim 1$.\\

\item[(4)] All further values, including fractional values, are now obtained by successively constraining the values remaining, in order. This gives: $1+5x \sim -5, 1-2x \sim 5, 1-3x \sim -5, 1-4x \sim 1, 1-5x \sim -1, 1-x \sim -1, 5+x \sim -1, 5-x \sim -5$.\\ \\
\end{itemize}

\noindent \boxed{\textbf{Case 7: } x \sim -10, 1+x \sim 1}\\

\begin{itemize}
\item[(1)] $1+2x \in C^2_1(x) = \{1,-1\}$. \\

\begin{itemize}
\setlength{\itemindent}{0.18in}
\item Assume $1+2x \sim -1$. Then a contradiction is obtained by considering $1+3x$.\\
\end{itemize}
Hence $1+2x \sim 1$.\\

\item[(2)] All further values, including fractional values, are now obtained by successively constraining the values remaining, in order. This gives: $1+3x \sim 1, 1+4x \sim 1, 1+5x \sim 1, 1-2x \sim 1, 1-3x \sim 1, 1-4x \sim 1, 1-5x \sim 1, 1-x \sim 1, 5+x \sim 5, 5-x \sim 5$.\\ \\
\end{itemize}

\noindent \boxed{\textbf{Case 8: } x \sim -10, 1+x \sim -1}\\

\begin{itemize}
\item[(1)] $1+2x \in C^2_1(x) = \{5,-1\}$.\\

\begin{itemize}
\setlength{\itemindent}{0.18in}
\item Assume $1+2x \sim -1$. Then $1+3x \in C^3_1(x) \cap C^3_2(x) = \{2\}$. Now a contradiction is obtained by considering $1+4x$.\\
\end{itemize}
Hence $1+2x \sim 5$.\\

\item[(2)] All further values, including fractional values, are now obtained by successively constraining the values remaining, in order. This gives: $1+3x \sim -5, 1+4x \sim 1, 1+5x \sim -1, 1-2x \sim 5, 1-3x \sim -1, 1-4x \sim 1, 1-5x \sim -5, 1-x \sim -5, 5+x \sim -5, 5-x \sim -1$.\\
\end{itemize}

\noindent This completes the proof.
\end{proof}

The results of these computations that are used in the proof of Proposition \ref{propxy} are summarized in Table A1:

\begin{table}[h] 
\caption{Stability of $\val_1$ under $N(5)$} 
\centering 
\begin{tabular}{l  c || c cccc} 
\hline\hline 
$x$ & $1+x$ & $1-x$ & $1+5x$ & $1-5x$ & $1+5^{-1}x$ & $1-5^{-1}x$
\\ [0.5ex] 
\hline 
2 & 1 & 1 & 1 & 1 & 1  & 1 \\
2 & -5 & -1 & -5 & -1 & -5 & -1 \\
-2 & 1 & 1 & 1 & 1 & 1 & 1 \\
-2 & -1 & -5 & -1 & -5 & -1 & -5 \\
10 & 1 & 1 & 1 & 1 & 1 & 1 \\
10 & -5 & -1 & -5 & -1 & -5 & -1 \\
-10 & 1 & 1 & 1 & 1 & 1 & 1 \\
-10 & -1 & -5 & -1 & -5 & -1 & -5 \\
 
\hline 
\end{tabular} 
\label{tab:PPer}
\caption{For each possible combination of $x$ and $1+x$, $x \in \val_1$, the corresponding row gives the square values of $1+ax$ for different values of $a$.}
\end{table}

We are now ready to prove Proposition \ref{propxy}. The proof strategy was explained in the main text. Table A1 gives us all the information required to produce strong constraints on the possible values of $1-xy$ for each pair $x,y \in \val_1$. We specify sufficient decompositions $D_a(x,y), D_a(y,x)$ in each such case to force $1-xy \in N(5)$.

Before proceeding, observe that any $x \in \val_1$ is of the form $2ab^2$ where $a \in \{\pm1, \pm 5 \}$. Since for any such $a$ and any other $y \in \val_1$, $ay \in \val_1$, we can without loss of generality always assume that $x \sim 2$. By the symmetry of $x$ and $y$ in $1-xy$, the case $x \sim y \sim 2, 1+x \sim -5, 1+y \sim  1$ follows from the case defined by swapping $x$ and $y$. The cases checked below are therefore exhaustive.

\begin{proof}(Proposition \ref{propxy})
By the above discussion, we only need to check the following cases:
\\

\boxed{x \sim 2, 1+x \sim 1, y \sim 2, 1+y \sim 1}\,:\\
\begin{itemize}
\setlength{\itemindent}{0.18in}
\item[] $1-xy \in D_1(x,y) \cap D_{-1}(x,y) \cap D_5(x,y) = \{1\} \subset N(5)$.\\
\end{itemize}

\boxed{x \sim 2, 1+x \sim 1, y \sim 2, 1+y \sim -5}\,:\\
\begin{itemize}
\setlength{\itemindent}{0.18in}
\item[] $1-xy \in D_1(x,y) \cap D_{-1}(x,y) \cap D_5(x,y) = \{1\} \subset N(5)$.\\
\end{itemize}

\boxed{x \sim 2, 1+x \sim 1, y \sim -2, 1+y \sim 1}\,:\\
\begin{itemize}
\setlength{\itemindent}{0.18in}
\item[] $1-xy \in D_1(x,y) \cap D_5(x,y) = \{1\} \subset N(5)$.\\
\end{itemize}

\boxed{x \sim 2, 1+x \sim 1, y \sim -2, 1+y \sim -1}\,: \\
\begin{itemize}
\setlength{\itemindent}{0.18in}
\item[] $1-xy \in D_1(x,y) \cap D_{-1}(x,y) = \{1\} \subset N(5)$.\\
\end{itemize}

\boxed{x \sim 2, 1+x \sim 1, y \sim 10, 1+y \sim 1}\,:\\
\begin{itemize}
\setlength{\itemindent}{0.18in}
\item[] $1-xy \in D_1(x,y)  = \{1,-1\} \subset N(5)$.\\
\end{itemize}

\boxed{x \sim 2, 1+x \sim 1, y \sim 10, 1+y \sim -5}\,:\\
\begin{itemize}
\setlength{\itemindent}{0.18in}
\item[] $1-xy \in D_1(x,y)  = \{1,-5\} \subset N(5)$.\\
\end{itemize}

\boxed{x \sim 2, 1+x \sim 1, y \sim -10, 1+y \sim 1}\,:\\
\begin{itemize}
\setlength{\itemindent}{0.18in}
\item[] $1-xy \in D_1(x,y) \cap D_{-1}(x,y)  = \{1\} \subset N(5)$.\\
\end{itemize}

\boxed{x \sim 2, 1+x \sim 1, y \sim -10, 1+y \sim -1}\,: \\
\begin{itemize}
\setlength{\itemindent}{0.18in}
\item[] $1-xy \in D_1(x,y) \cap D_5(x,y)  = \{1\} \subset N(5)$.\\
\end{itemize}

\boxed{x \sim 2, 1+x \sim -5, y \sim 2, 1+y \sim -5}\,:\\
\begin{itemize}
\setlength{\itemindent}{0.18in}
\item[] $1-xy \in D_1(x,y) \cap D_{-1}(x,y) \cap D_5(x,y)  = \{5\} \subset N(5)$.\\
\end{itemize}

\boxed{x \sim 2, 1+x \sim -5, y \sim -2, 1+y \sim 1}\,: \\
\begin{itemize}
\setlength{\itemindent}{0.18in}
\item[] $1-xy \in D_1(x,y) \cap D_{-1}(x,y)   = \{1\} \subset N(5)$.\\
\end{itemize}

\boxed{x \sim 2, 1+x \sim -5, y \sim -2, 1+y \sim -1}\,:\\
\begin{itemize}
\setlength{\itemindent}{0.18in}
\item[] $1-xy \in D_1(x,y)  \cap D_5(x,y)  = \{5\} \subset N(5)$.\\
\end{itemize}

\boxed{x \sim 2, 1+x \sim -5, y \sim 10, 1+y \sim 1}\,:\\
\begin{itemize}
\setlength{\itemindent}{0.18in}
\item []$1-xy \in D_1(x,y)  = \{1,-5\} \subset N(5)$.\\
\end{itemize}

\boxed{x \sim 2, 1+x \sim -5, y \sim 10, 1+y \sim -5}\,:\\
\begin{itemize}
\setlength{\itemindent}{0.18in}
\item[] $1-xy \in D_1(x,y) = \{5,-5\} \subset N(5)$.\\
\end{itemize}

\boxed{x \sim 2, 1+x \sim -5, y \sim -10, 1+y \sim 1}\,: \\
\begin{itemize}
\setlength{\itemindent}{0.18in}
\item[] $1-xy \in D_1(x,y) \cap D_5(x,y)  = \{1\} \subset N(5)$.\\
\end{itemize}

\boxed{x \sim 2, 1+x \sim -5, y \sim -10, 1+y \sim -1}\,: \\
\begin{itemize}
\setlength{\itemindent}{0.18in}
\item[] $1-xy \in D_1(x,y) \cap D_{-1}(x,y) = \{5\} \subset N(5)$.\\
\end{itemize}

\noindent In all cases $1-xy \in N(5)$, completing the proof.
\end{proof}

\section{Ruling out the exceptional case}

In this section we aim to prove Proposition \ref{caseb}, i.e. that the exceptional Case B of Lemma \ref{keylemma} cannot occur. 

\subsection{Preliminary observations about Case B.}
\label{sec:c0}

Recall that $q_2(K) = 3$, and we have independent elements $-1$ and $2$ in $\sq$, with each norm-group having dimension 2. Therefore we can always pick $c \in K$ independent of $-1$ and $2$, with $c \in N_K(-1)$. Hence $\sq = \{1,-1,2,-2,c,-c,2c,-2c\}$.

Thus we automatically have $N_K(-1) = \langle 2,c \rangle$, and $N_K(2) = \langle -1, 2 \rangle$. Consider $N_K(c)$. Trivially we have $-c \in N_K(c)$. If $2 \in N_K(c)$, then $c \in N_K(2)$ by reciprocity. As this is not the case, we have $2 \not \in N_K(c)$. This rules out $\pm 2c$ being in $N_K(c)$ as well, which implies we must have $N_K(c) = \langle -1, c \rangle$. Using the fact that the norm groups need to be entirely distinct, since $K$ is Demushkin, one can proceed similarly to determine the norm groups of $K$. We end up with the following `lattice':\\
\begin{eqnarray}
N_K(-1) &=& \langle 2,c \rangle \nonumber \\
N_K(2) &=&\langle -1,2 \rangle \nonumber \\
N_K(-2)& =& \langle 2, -c \rangle \nonumber \\
N_K(c) &=& \langle -1,c \rangle \nonumber \\
N_K(-c) &=& \langle c, -2 \rangle \nonumber \\
N_K(2c) &=& \langle -1, 2c \rangle \nonumber \\
N_K(-2c) &=& \langle -c, -2 \rangle \nonumber
\end{eqnarray}


\subsection{Motivation}

Because the proof which follows, presented without motivation, would no doubt appear extremely ad hoc to the reader, we also discuss some of the reasoning that led to its discovery.

By refining the analysis of Case B carried out in Lemma \ref{keylemma}, one can show, in the case where $\text{dim}_{\mathbb{F}_p} k^{\times}/(k^{\times})^2 = 3$, that $k$ admits both an ordering and a $p$-adic valuation with $p \equiv 5 \, (8)$ and a residue field that is not $2$-closed.\footnote{To see why, note that in this case one has $c \not \in N_k(2)$ but, being positive, $c \in N_k^r(2)$ with $k^r$ the real closure. Taking an inverse limit over the valuations/orderings for which $c \not \in N_{k^h}(2)$ produces a valuation on $k$ which is either dyadic, putting us back in Case A, or is as described here. It cannot be another ordering, by construction.} Let $k^h$ denote the henselization of $k$ with respect to this $p$-adic valuation. In this case, $2$ is a square in the real closure of $k$ but not in $k$, so we must have that $2 \not \in (k^h)^2$ and so, by henselianity, $2 \not \in (k^hv_p)^2$. Thus $k^h(\sqrt{2})$ is the unique unramified quadratic extension of $k^h$, and so $\val(N_k(2))$ defines the $p$-adic valuation ring in $k^h$, and hence also in $k$. The `yoga' of norm-combinatorics suggests that $\val(N_K(2))$ should then also define a valuation on $K$, again with residue field not $2$-closed. This would provide a contradiction, using the main results from \cite{koe2}.

The process of attempting to carry through the norm-combinatorics in this case lead the authors to a contradiction in any situation where $3 \sim 1$ or $3 \sim 2$, as in sub-case $B_1$. Sub-case $B_2$ was found to be ruled out in a similar manner. The contradiction can be understood, in both sub-cases, as showing that certain elements which necessarily exist in $K$, due to $K$ being Demushkin, are incompatible with the norm group information inferred from $k$ (e.g. the square value of $3$). These are elements which cannot exist in $k$ because of the existence of the ordering, and since, `morally' speaking, $K$ should inherit the valuation-theoretic information of $k$, these elements should therefore not exist in $K$ either.

\subsection{Proof of Proposition \ref{caseb}}

We deal with the two sub-cases separately.

\subsubsection*{Case $B_1$.}
\label{sec:c1}

We need two preliminary lemmas, which show that certain arithmetic features of $K$ are in this case independent of whether $3 \sim 1$ or $2$. Note that the norm groups in what follows are computed using the norm lattice presented in the previous section.
\\

\begin{lemma}
Assume $3 \sim 2$. Let $x \in K$ be such that $x \sim c, 1+x \sim -2$, and $y \in K$ be such that $y \sim -2c, 1+y \sim -1$. Then it is necessarily the case that $1-x \sim 1$ and $1-y \sim 1$.
\end{lemma}
\begin{proof}
Let us first show the claim about $x$.\\
\begin{itemize}
\item[] First note that $1-x = (1+x) -2x \in N_K(c) \cap -2N_K(-c) = \{1,c\}$. If $1-x \sim c$, then $1+2x = (1+x) +x = (1-x) + 3x \in N_K(-2c) \cap -2N_K(2c) \cap cN_K(-2) = \{-2\}$, so $1+2x \sim -2$. But then $1+3x = (1+x) + 2x = (1+2x) + x = (1-x) + 4x \in N_K(-2c) \cap 2N_K(c) \cap -2N_K(2c) \cap cN_K(-1) = \varnothing$: contradiction. Hence it must be that $1-x \sim 1$. \\
\end{itemize}

Next we show the claim about $y$.\\
\begin{itemize}
\item[] We have $1-y = (1+y)-2y \in N_K(-2c) \cap -N_K(c) = \{1,-c\}$. Suppose $1-y \sim -c$. Then we find $1+2y = (1+y)+y = (1-y)+3y \in N_K(c) \cap -N_K(-2c) \cap -cN_K(-1) = \{-1\}$. Also, $1-2y = (1+y)-3y = (1-y)-y = (1+2y)-4y \in N_K(-c) \cap -N_K(c) \cap -cN_K(2) \cap -N_K(2c) = \varnothing$: contradiction. Hence $1-y \sim 1$.\\
\end{itemize}

\noindent The proof is complete.
\end{proof}

\begin{lemma}
Assume $3 \sim 1$. Let $x \in K$ be such that $x \sim c, 1+x \sim -2$, and $y \in K$ be such that $y \sim -2c, 1+y \sim -1$. Then it is necessarily the case that $1-x \sim 1, 1-y \sim 1$.
\end{lemma}
\begin{proof}
Let us first show the claim about $x$.\\
\begin{itemize}
\item[] Firstly, $1-x  = (1+x) - 2x \in N_K(c) \cap -2N_K(-c) = \{1,c\}$. Assume that $1-x \sim c$. Then $1+2x = (1+x)+x = (1-x)+3x \in N_K(-2c) \cap -2N_K(2c) \cap cN_K(-1) = \varnothing$: contradiction. Therefore $1-x \sim 1$.\\
\end{itemize}

Next we show the claim about $y$.\\
\begin{itemize}
\item[] Firstly, $1-y = (1+y)-2y \in N_K(-2c) \cap -N_K(c) = \{1,-c\}$. Suppose $1-y \sim -c$. Then we find $1+2y = (1+y)+y = (1-y)+3y \in N_K(c) \cap -N_K(-2c) \cap -cN_K(-2) = \varnothing$: contradiction. Hence $1-y \sim 1$.\\
\end{itemize}

\noindent The proof is complete.
\end{proof}

We now rule out sub-case $B_1$. Concretely, we will show that we cannot have both $N_K(-c) = \langle -2, c \rangle$ \emph{and} $N_K(2c) = \langle -1, 2c \rangle$. Therefore, one of these two norm-groups must have dimension 1, contradicting the fact that $K$ is Demushkin.

\begin{proof}(Proposition \ref{caseb} $B_1$.)

We have that $-2 \in N_K(-c)$, so, after dividing by suitable squares, we may pick an element $x \in K$ with $x \sim c$ and $1+x  \sim -2$. Also, since $-1 \in N_K(2c)$, we may similarly pick an element $y \in K$ with $y \sim -2c, 1+y \sim -1$. By Lemmas C1 and C2, we know then, irrespective of the square value of $3$, that $1-x \sim 1, 1-y \sim 1$.

Now consider $1-xy$. Note that we have, as in Case A, the canonical decompositions
\begin{equation}
\label{decomp3}
1-xy = (1+ay)\left(1+(1+a^{-1}x)\frac{-ay}{1+ay}\right) 
\end{equation}
for any $a \in K$. If we define
\begin{equation}
G_a(x,y) = (1+ay)N( a(1+a^{-1}x)y(1+ay) ) \nonumber
\end{equation}
then $1-xy \in G_a(x,y)$ by equation (\ref{decomp3}). Similarly, $1-xy \in G_a(y,x)$. We also trivially have $1-xy \in N_K(xy) = N(-2)$. We now compute, using the results of Lemmas C1 and C2, that
\begin{eqnarray}
G_1(x,y) &=& -1 \cdot N_K(-2 \cdot (-2c) \cdot (-1)) = -N_K(-c) \nonumber  \\ \nonumber 
G_{-1}(x,y) &=& 1 \cdot N_K(-1 \cdot 1 \cdot (-2c) \cdot 1) = N_K(2c). \nonumber 
\end{eqnarray}
Hence
\begin{equation}
1-xy \in N_K(-2) \cap -N_K(-c) \cap N_K(2c) = \varnothing, \nonumber
\end{equation}
which gives the required contradiction.
\end{proof}

\subsection{Case $B_2$.}
\label{sec:c2}

In this case we have that $3$ is independent of $\pm 1, \pm 2$ in $k$, $N_k(2) = \langle 2,3 \rangle$, and $k^{\times}/(k^{\times})^2 = \langle -1,2,3 \rangle$. The proof of Lemma \ref{keylemma} shows that Case $B_2$ arises when $N_k(-1)$ has dimension 1, since otherwise the existence of the ordering on $k$ forces the equality $N_k(-2)=\langle 2 \rangle$ and thus $3 \sim 1$ or $3 \sim 2$. Since $2 \in N_k(-1)$, it follows that in Case $B_2$ that $N_k(-1) = \langle 2 \rangle$. Additionally, we always have the equality $N_k(2) = \langle -1,2 \rangle$. This follows from the fact that the corresponding equality has to hold in $K$, since $K$ is Demushkin and the right-hand-side is trivially contained in $N_K(2)$. The norm groups $N_k(a)$ for $a=\pm 3, \pm 6$ can be easily deduced from this, using reciprocity and the fact that $N_k(-3)$ and $N_k(-6)$ must contain only positive integers (modulo squares), due to the ordering on $k$. We get
\begin{eqnarray}
    N_k(-1) &=& \langle 2 \rangle \nonumber \\
    N_k(2) &=&\langle -1,2 \rangle \nonumber \\
    N_k(-2)& =& \langle 2, 3 \rangle \nonumber \\
    N_k(3) &=& \langle -2,-3 \rangle \nonumber \\
    N_k(-3) &=& \langle 3 \rangle \nonumber \\
    N_k(6) &=& \langle -2, 3 \rangle \nonumber \\
    N_k(-6) &=& \langle 6 \rangle. \nonumber
\end{eqnarray}

Using this lattice it is easy to deduce the following
\begin{lemma}
\label{finallemma}
In sub-case $B_2$, $5 \sim 2$ in $k$. 
\end{lemma}
\begin{proof}
We have $5=1+4=2+3 \in N_k(-1) \cap 2N_k(-6) = \{2\}$. 
\end{proof}

Next, since $k$ is relatively algebraically closed in $K$, $N_k(-2) = \langle 2, 3 \rangle$ generates a subspace of $N_K(-2)$, and being of the same dimension they are thereby equal. By reference to the lattice presented in section \ref{sec:c0}, we know $N_K(-2) = \langle 2, -c \rangle$, implying that $3 \sim -c$ or $3 \sim -2c$. Since the lattice in section \ref{sec:c0} is invariant when replacing $c$ by $2c$, we may without loss of generality take $c=-3$. This gives us the following lattice for $K$:
\begin{eqnarray}
    N_K(-1) &=& \langle 2,-3 \rangle \nonumber \\
    N_K(2) &=&\langle -1,2 \rangle \nonumber \\
    N_K(-2)& =& \langle 2, 3 \rangle \nonumber \\
    N_K(-3) &=& \langle -1,3 \rangle \nonumber \\
    N_K(3) &=& \langle -2,-3 \rangle \nonumber \\
    N_K(-6) &=& \langle -1, 6 \rangle \nonumber \\
    N_K(6) &=& \langle -2,3 \rangle. \nonumber
\end{eqnarray}

The fact that $N_K(-6) = \langle -1,6 \rangle$ implies that there exists an $x \in K$ with $x \sim 6, 1+x \sim -1$. On the other hand, we prove that

\begin{lemma}
\label{contradiction}
There can be no element $x \sim 6$ with $1+x \sim -1$ in $K$, under the assumptions of sub-case $B_2$, provided that $5 \sim 2$ in $K$. 
\end{lemma}
\begin{proof}
Indeed, consider the element $1-3x$. We have
\begin{equation}
    1-3x = (1+x)-4x \in N_K(3x) \cap -N_K(-4x) = N_K(2) \cap -N_K(-6) = \{1,-1\}.
\end{equation}
Assume that $1-3x \sim 1$. Then
\begin{eqnarray}
    1-2x &=& (1+x)-3x = (1-3x)+x \nonumber \\
    \in N_K(2x) &\cap& -N_K(-3x) \cap N_K(-x) \nonumber \\
         = N_K(3) &\cap& -N_K(-2) \cap N_K(-6) = \varnothing,
\end{eqnarray}
giving a contradiction. So it must be that $1-3x \sim -1$. In this case,
\begin{equation}
    1-2x \in N_K(2x) \cap -N_K(-3x) \cap -N_K(x) = \{-3\},
\end{equation}
using the same decompositions as above. But, finally, this implies that
\begin{eqnarray}
    1-5x &=& (1+x)-6x = (1-3x)-2x = (1-2x)-3x \nonumber \\
    \in N_K(2x) &\cap& -N_K(-6x) \cap -N_K(-2x) \cap -3N_K(-x) \nonumber \\
    = N_K(3) &\cap& -N_K(-1) \cap -N_K(-3) \cap -3N_K(-6) = \varnothing,
\end{eqnarray}
a contradiction which completes the proof of the lemma.


\end{proof}

To finish the argument, $5 \sim 2$ in $k$ implies $5/2 \in k^2 \subset K^2$, and hence $5 \sim 2$ in $K$. Thus Lemmas \ref{finallemma} and \ref{contradiction} generate a contradiction, which rules out sub-case $B_2$.

\newpage

\bibliographystyle{amsalpha}
\bibliography{Demushkin_references}

\affiliationone{
   Jochen Koenigsmann\\
   Department of Mathematics,
   University of Oxford,  
   Andrew Wiles Building,
   Oxford OX2 6GG, 
   United Kingdom
   \email{jochen.koenigsmann@maths.ox.ac.uk\\
   }}
\affiliationtwo{
   Kristian Strommen\\
   Department of Physics,
   University of Oxford,  
   Clarendon Laboratory,
   Oxford OX1 3PU, 
   United Kingdom
   \email{kristian.strommen@physics.ox.ac.uk\\
   }}

\end{document}